\newenvironment{pf}{\begin{proof}}{\end{proof}}
\newcommand{\w}{\operatorname{w}}
\newcommand{\ide}{\operatorname{id}}
\newcommand{\Aaa }{\mathcal A}
\newcommand{\Raa }{\mathcal R}
\newcommand{\Bee }{\mathcal B}
\newcommand{\Pee }{\mathcal P}
\newcommand{\ca}{{\mathbb{Q}}}
\newcommand{\fK}{{\mathfrak{K}}}
\newcommand{\fL}{{\mathfrak{L}}}
\newcommand{\Ob}{\operatorname{Obj}}
\newcommand{\Cod}{\operatorname{Cod}}
\newcommand{\rest}{\restriction}
\newcommand{\loe}{\leq}
\newcommand{\subs}{\subseteq}
\newcommand{\liminv}{\varprojlim}
\newcommand{\cl}{\operatorname{cl}}
\renewcommand{\int}{\operatorname{int}}
\newcommand{\fra}{Fra\"iss\'e }
\newcommand{\amor}{\SelectTips{cm}{11}}
\newcommand{\map}[3]{#1\colon #2 \to #3} 
\newcommand{\obj}[1]{\operatorname{Obj}\left(#1\right)}
\newcommand{\nic}[1]{}
\newtheorem{theorem}{Theorem}[section]
\newtheorem{corollary}[theorem]{Corollary}
\newtheorem{lemma}[theorem]{Lemma}
\newtheorem{proposition}[theorem]{Proposition}
\theoremstyle{definition}
\author{Wies{\l}aw Kubi\'s} 
\address{Wies{\l}aw Kubi\'s\\  Institute of Mathematics, Czech Academy of Sciences, Czechia\\
Institute of Mathematics\\
Cardinal Stefan Wyszynski Univwersity in Warsaw\\
Warszawa, Poland\\} \email{kubis@math.cas.cz}
\author{Andrzej Kucharski} 
\address{Andrzej Kucharski \\ University of Silesia in Katowice \\  Bankowa 14, 40-007 Katowice, Poland} \email{andrzej.kucharski@us.edu.pl} 
\author{ S\l awomir Turek}
\address{S\l awomir Turek\\ Institute of Mathematics\\
	Cardinal Stefan Wyszynski Univwersity in Warsaw\\
	Warszawa, Poland\\}
\email{s.turek@uksw.edu.pl}
\date{ver. 2024-07-07, \clocktime\today}
\title{On generic topological embeddings}
\begin{document}
	
	\maketitle
	
	\begin{abstract}
	 We show that an embedding of a fixed 0-dimensional compact space $K$ into the \v Cech--Stone remainder $\omega^*$ as a nowhere dense P-set is the unique generic limit, a special object in the category consisting of all continuous maps from $K$ to compact metric spaces.
	 Using  \fra theory we get a few well know theorems about  \v Cech--Stone remainder.	 
	 We establish the following:
	 
	 -- an ultrametric space $K$ of weight $\kappa$ can be uniformly embedded into $\kappa^\kappa$ as  a uniformly nowhere dense subset,
	 
	 -- every uniform homeomorphism of uniformly nowhere dense sets in  $\kappa^\kappa$ 
	 can be extended to a uniform auto-homeomorphism of  $\kappa^\kappa$,
	 
	 -- every uniformly nowhere dense set in  $\kappa^\kappa$   is a uniform retract of 	$\kappa^\kappa$.
	 
	 If we assume that $\kappa$ is a weakly compact cardinal we get the counterpart of the above result 	 without the uniformity assumption.
	 \\
	 {\bf Keywords:}  \fra\ limit, compact spaces, ultrametric spaces, F-spaces.
	 \\
	 {\bf MSC (2020):}
	  	54B30,   	
	  	18F60,   	
	  	54C25,   	
	  	54C15.   	
	\end{abstract}
\tableofcontents

\section{Introduction}

A classical theorem of Cantor says that the rational numbers $\ca$ is the only (up to isomorphism) countable linear ordering 
which contains all other countable linear orderings and every isomorphism between two finite subsets extends to 
an automorphism of the rational numbers $\ca.$ In 1927 \cite{Urysohn} Urysohn found a complete separable metric space 
$\mathbb{U}$ which has very similar properties to the rationals, considering isometries between finite subsets. Urysohn’s work 
is an analogue of Cantor’s back-and-forth argument for metric spaces. A model-theoretic approach to the back-and-forth argument 
is due to Roland \fra in 1954 \cite{F1}. Irwin \& Solecki \cite{IrwSol} presented a variant of \fra theory
with reversed arrows, i.e., epimorphisms of finite structures instead of embeddings. We are looking for generic object, a special object in a bigger category, characterized by a certain variant of projectivity. Our approach fits into the framework  of Droste \& G\"obel theory~\cite{DroGoe} and of the first author~\cite{KubFra}. We prove that an embedding $\eta\colon K\to\omega^*$ of a fixed 0-dimensional compact space $K$ into $\omega^*$ as a nowhere dense P-set is a generic object in a suitable category. From these statement, assuming CH, we derive two consequences: 
\begin{itemize}
	\item (Balcar, Frankiewicz and Mills \cite{Bal-Fra-Mills}) Every compact 0-dimensional F-space of weight $\mathfrak{c}$ can be embedded as a nowhere dense closed P-set in $\omega^*.$
 
\item (van Douwen and van Mill \cite{Mill-Douw}) Every homeomorphism of nowhere dense closed $P$-sets in  $\omega^*$ 
	can be extended to an autohomeomorphism of  $\omega^*$ and  every closed $P$-set of weight at most $\mathfrak{c}$  is a retract of 
	$\omega^*$.
	\end{itemize}

 Following \cite{Nyi} we use ultrametrics, to be defined in Section~\ref{sekcja:3}. We show that

  --  an ultrametric space $K$ of weight $\kappa$ can be  uniformly embedded into $\kappa^\kappa$ as a uniformly nowhere dense subset,
 
 --  every uniform homeomorphism of uniformly nowhere dense sets in  $\kappa^\kappa$ 
 can be extended to a uniform auto-homeomorphism of  $\kappa^\kappa$,
 
 -- every uniformly nowhere dense set in  $\kappa^\kappa$   is a uniform retract of 	$\kappa^\kappa$.
 
 In Section~\ref{sekcja:4}  

 we assume that $\kappa$ is a weakly compact cardinal. Then we get similar result to the above about the ultrametric space $2^\kappa$, which becomes $\kappa$-compact, nowhere dense subset of $\kappa$-compact ultrametric space 
 $2^\kappa$  becomes a uniformly nowhere dense subset and  a continuous map from a $\kappa$-compact ultrametric space to a Hausdorff ultrametric space becomes continuous with respect to the ultrametric.

\section{Preliminaries}

We recall some basic results concerning \fra theory  developed by
Kubiś \cite{KubFra}.	For undefined notions concerning category theory we refer to \cite{MacLane}.
Let $\mathfrak{K}$ be a category and let $\kappa$ be a cardinal. 

A category $\mathfrak{K}$ is {\em $\kappa$-complete} if every inverse sequence in $\fK$ of length $<\kappa$ has a limit in $\fK$. 
A \emph{$\delta$-sequence} in $\fK$ is an inverse sequence (a contravariant functor) whose domain is the ordinal $\delta$ viewed as a linearly ordered category. A $\delta$-sequence $\vec x$ is \emph{continuous} if for every limit ordinal $\eta<\delta$ the $\eta$th object $X_\eta$ is the limit of $\vec x \rest \eta$. Given a sequence $\vec x$, we denote by $x_\alpha^\beta$ the bonding arrow from $X_\beta$ to $X_\alpha$ ($\alpha \loe \beta$).

We say that $\fK$ is \emph{directed}
if for every $A,B\in\obj{\fK}$ there are $C\in\obj{\fK}$ and arrows $f\in\fK(C,A)$ and $g\in\fK(C,B)$.

A family of arrows $\mathcal{F}$ is {\em dominating} in $\mathfrak{K}$ if it satisfies the following two conditions.
\begin{enumerate}\label{domin}
\item[(i)] For every $X\in\obj\fK$ there is 
$A\in\Cod(\mathcal{F})$ such that $\mathfrak{K}(A, X)\ne\emptyset$.
\item[(ii)]Given $A\in\Cod(\mathcal{F})$ and $f\in \fK(Y,A)$  there exist $g\in\fK(B,Y)$  such that $f\circ g\in\mathcal{F}$.
\end{enumerate}
Here, $\Cod(\mathcal{F})$ is the collection of all codomains of the arrows from $\mathcal{F}$.

A {\em $\kappa$-\fra sequence} in $\mathfrak{K}$ is an inverse sequence $\vec{u}$ of length $\kappa$ satisfying the following conditions:
\begin{enumerate}
\item[(U)] For every object $X$ of $\mathfrak{K}$ there exists $\alpha<\kappa$ such that $\mathfrak{K}(U_\alpha, X)\ne\emptyset$.
\item[(A)] For every $\alpha<\kappa$ and for every morphism $f\colon Y\to U_\alpha$, where $Y\in \Ob(\mathfrak{K})$, there exist $\beta\ge\alpha$ and $g\colon U_\beta\to Y$ such that $u_\alpha^\beta=f\circ g$.
 \end{enumerate}
 We say that $\fK$ has the \emph{amalgamation property} if for every $A,B,C\in\obj{\fK}$ and for every morphisms $f\in\fK(B,A)$, $g\in\fK(C,A)$ there exist $D\in\obj{\fK}$ and morphisms $f'\in\fK(D,B)$ and $g'\in\fK(D,C)$ such that $f\circ f' = g\circ g'$.

\begin{theorem}[\cite{KubFra} Theorem~3.7]\label{exists}
Let $\kappa$ be an infinite regular cardinal and let $\mathfrak{K}$ be a $\kappa$-complete directed category with the amalgamation property. Assume further that $\mathcal{F}\subs \mathfrak{K}$  is dominating in $\mathfrak{K}$ and $\vert\mathcal{F}\vert\le\kappa$. Then there exists a continuous \fra  sequence of length $\kappa$ in $\mathfrak{K}$.
\end{theorem}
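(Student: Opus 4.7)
The plan is to construct $\vec u = (U_\alpha, u_\alpha^\beta)_{\alpha\loe\beta<\kappa}$ by transfinite recursion, with a bookkeeping scheme designed to enforce both (U) and (A). Since $|\mathcal F|\le\kappa$ and $\kappa$ is regular, a pairing function on $\kappa\times\kappa$ lets me fix an enumeration $(T_\gamma)_{\gamma<\kappa}$ of ``tasks'' in which every arrow of $\mathcal F$ is scheduled cofinally often (this will ultimately give (U)) and every morphism $f\colon Y\to U_\beta$ into a previously constructed object is also scheduled cofinally often (this will give (A)). The morphism tasks must be scheduled dynamically, because $U_\beta$ only appears at stage $\beta$; this is the usual \fra-style bookkeeping trick, legitimized by the regularity of $\kappa$.

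For the base stage, pick any $f_0\in\mathcal F$ and set $U_0:=\Cod(f_0)$; I maintain as an inductive invariant that $U_\alpha\in\Cod(\mathcal F)$. At a successor stage $\alpha+1$, suppose the scheduled task is a morphism $f\colon Y\to U_\beta$ with $\beta\loe\alpha$. Apply the amalgamation property to $f$ and $u_\beta^\alpha\colon U_\alpha\to U_\beta$ to obtain $Z$ together with $p\colon Z\to U_\alpha$ and $q\colon Z\to Y$ satisfying $u_\beta^\alpha\circ p=f\circ q$. Since $U_\alpha\in\Cod(\mathcal F)$, condition~(ii) of dominance applied to $p$ produces $h\colon U_{\alpha+1}\to Z$ with $p\circ h\in\mathcal F$; set $u_\alpha^{\alpha+1}:=p\circ h$. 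Then $U_{\alpha+1}\in\Cod(\mathcal F)$, the invariant persists, and with $g:=q\circ h$ one has $u_\beta^{\alpha+1}=u_\beta^\alpha\circ p\circ h=f\circ q\circ h=f\circ g$, witnessing (A) for $f$. If the task is instead to visit the codomain of a specific arrow of $\mathcal F$, use directedness together with an analogous amalgamation step. At a limit stage $\eta<\kappa$, set $U_\eta:=\liminv_{\alpha<\eta}U_\alpha$ by $\kappa$-completeness, with the canonical projections as bonding arrows (so continuity is automatic); a single post-limit amalgamation step, together with condition~(i) producing some $A\in\Cod(\mathcal F)$ with $\fK(A,U_\eta)\ne\emptyset$, restores $U_{\eta+1}\in\Cod(\mathcal F)$.

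Verification then becomes formal: property (A) holds because every morphism into every $U_\beta$ is eventually scheduled and handled by the successor step, while (U) follows from (i), which for each $X$ provides some $A\in\Cod(\mathcal F)$ with an arrow $A\to X$; bookkeeping ensures that a bonding arrow from some $U_\alpha$ into $A$ arises, and composition gives $U_\alpha\to X$. The step I expect to be the main obstacle is the dynamic bookkeeping itself: morphisms into the $U_\alpha$'s cannot be prelisted because the $U_\alpha$'s do not yet exist, so the enumeration has to interleave prescheduled $\mathcal F$-tasks with morphism-tasks introduced along the way while still revisiting every task cofinally often. The standard $\kappa\times\kappa$ pairing argument, combined with the regularity of $\kappa$, handles this, but it is the least mechanical part of the construction. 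A secondary subtlety is preserving $U_\alpha\in\Cod(\mathcal F)$ across limit stages, which is resolved by the extra post-limit amalgamation just described.
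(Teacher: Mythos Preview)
The paper does not supply its own proof of this theorem; it is quoted from \cite{KubFra} (Theorem~3.7 there) with only the remark that replacing $\kappa$-boundedness by $\kappa$-completeness is an ``obvious adaptation''. So there is no in-paper argument to compare against, and your transfinite-recursion-with-bookkeeping outline is indeed the standard approach used in \cite{KubFra}.

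There is, however, a real gap in your bookkeeping for (A). You propose to schedule ``every morphism $f\colon Y\to U_\beta$ into a previously constructed object'' as a task and claim that the $\kappa\times\kappa$ pairing argument together with regularity of $\kappa$ handles the dynamic enumeration. But nothing in the hypotheses bounds the number of $\fK$-arrows into a fixed object by $\kappa$; the category may have $2^\kappa$ or more morphisms into each $U_\beta$ while still admitting a dominating family $\mathcal F$ of size $\loe\kappa$. In that case no enumeration of length $\kappa$ can list them all, and your verification of (A) (``every morphism into every $U_\beta$ is eventually scheduled'') breaks down. Regularity of $\kappa$ controls the length of the schedule, not the cardinality of the pool of tasks.

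The repair is to use $\mathcal F$ for the (A)-bookkeeping as well, not only for the invariant $U_\alpha\in\Cod(\mathcal F)$. Schedule only the arrows of $\mathcal F$, each cofinally often; when the current task is some $h\in\mathcal F$ with $\Cod(h)=U_\beta$ for a $\beta\loe\alpha$ already constructed, amalgamate $h$ with $u_\beta^\alpha$ exactly as in your successor step, and otherwise make a trivial extension. Condition (A) then follows for an \emph{arbitrary} $f\colon Y\to U_\alpha$: since $U_\alpha\in\Cod(\mathcal F)$, dominance~(ii) gives $g\colon B\to Y$ with $f\circ g\in\mathcal F$; this $\mathcal F$-arrow has codomain $U_\alpha$, hence is handled at some later stage $\gamma+1$, yielding $u_\alpha^{\gamma+1}=(f\circ g)\circ k=f\circ(g\circ k)$. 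The same device combined with directedness already forces (U), so your separate ``visit a codomain'' tasks become unnecessary.
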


In fact,~\cite[Theorem~3.7]{KubFra} assumes that the category $\fK$ is $\kappa$-bounded, which is weaker than being $\kappa$-complete, however, an obvious adaptation of the proof gives a continuous \fra sequence whenever $\fK$ is $\kappa$-complete.

\begin{theorem}[\cite{KubFra} Theorem~3.12]\label{uni}
Let $\mathfrak{K}$ be a category with the amalgamation property and let $\vec{u}$ be a \fra sequence of regular length $\kappa$ in $\mathfrak{K}$. Then for every continuous sequence $\vec{x}$ in  $\mathfrak{K}$ of length $\le\kappa$ there exists an arrow of sequences $F\colon \vec{u}\to\vec{x}$.
\end{theorem}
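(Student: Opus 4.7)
The plan is to construct the arrow $F\colon\vec u\to\vec x$ by transfinite recursion on the length $\delta\loe\kappa$ of $\vec x$. Concretely, I would produce simultaneously a strictly increasing function $\phi\colon\delta\to\kappa$ together with morphisms $f_\alpha\colon U_{\phi(\alpha)}\to X_\alpha$ satisfying the coherence relation $x_\alpha^\gamma\circ f_\gamma = f_\alpha\circ u_{\phi(\alpha)}^{\phi(\gamma)}$ for all $\alpha\loe\gamma<\delta$. The pair $\bigl(\phi,(f_\alpha)_{\alpha<\delta}\bigr)$ will be the desired arrow of sequences.

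At the base step, condition (U) of a \fra sequence supplies $\phi(0)<\kappa$ and an arrow $f_0\in\fK(U_{\phi(0)},X_0)$. At a successor step, assuming $\phi(\alpha)$ and $f_\alpha$ are in hand, I would first apply amalgamation to the cospan $f_\alpha\colon U_{\phi(\alpha)}\to X_\alpha$ and $x_\alpha^{\alpha+1}\colon X_{\alpha+1}\to X_\alpha$, obtaining $Y\in\obj\fK$ with $p\colon Y\to U_{\phi(\alpha)}$ and $q\colon Y\to X_{\alpha+1}$ such that $f_\alpha\circ p = x_\alpha^{\alpha+1}\circ q$. Then I would invoke property (A) on $p$ to produce some $\beta\goe\phi(\alpha)$ and $g\colon U_\beta\to Y$ with $u_{\phi(\alpha)}^\beta = p\circ g$. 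Setting $\phi(\alpha+1)=\beta$ (after bumping it strictly above $\phi(\alpha)$ by composing with a bonding arrow of $\vec u$ if necessary) and $f_{\alpha+1}=q\circ g$, the required square commutes:
\[
x_\alpha^{\alpha+1}\circ f_{\alpha+1} = x_\alpha^{\alpha+1}\circ q\circ g = f_\alpha\circ p\circ g = f_\alpha\circ u_{\phi(\alpha)}^{\phi(\alpha+1)}.
\]

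For a limit ordinal $\eta<\delta$, I would set $\phi(\eta)=\sup_{\alpha<\eta}\phi(\alpha)$; by regularity of $\kappa$ this stays strictly below $\kappa$ whenever $\eta<\kappa$. The family of arrows $\bigl(f_\alpha\circ u_{\phi(\alpha)}^{\phi(\eta)}\colon U_{\phi(\eta)}\to X_\alpha\bigr)_{\alpha<\eta}$ is compatible with the bonding maps of $\vec x\rest\eta$, so continuity of $\vec x$ at $\eta$ yields a unique mediating arrow $f_\eta\colon U_{\phi(\eta)}\to X_\eta$ fitting into the diagrams, which settles the limit step.

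The principal obstacle is precisely this limit step: the construction uses in an essential way that $\vec x$ is continuous, since only then does a compatible cone factor through the limit object $X_\eta$; on the source side no such assumption is made about $\vec u$, and one can only chase indices forward via (A). Regularity of $\kappa$ is what prevents this chase from escaping $\kappa$, and by ensuring at successor stages that $\phi(\alpha+1)>\phi(\alpha)$ one also obtains that $\phi$ is cofinal in $\kappa$ when $\delta=\kappa$, so $\bigl(\phi,(f_\alpha)_{\alpha<\delta}\bigr)$ constitutes the sought arrow of sequences.
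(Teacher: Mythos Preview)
The paper does not supply its own proof of this statement; it is quoted verbatim as \cite[Theorem~3.12]{KubFra} and used as a black box. Your transfinite recursion---initialising via (U), handling successors by amalgamation followed by (A), and passing through limits using the continuity hypothesis on $\vec x$ together with the regularity of $\kappa$---is exactly the standard argument and is correct.
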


Let us assume that $\mathfrak{K}$ is a full subcategory of a bigger category $\mathfrak{L}$ such that
the following compatibility conditions are satisfied.
\begin{enumerate}
\item [(L0)] All $\mathfrak{L}$-arrows are epi.
\item [(L1)] Every inverse sequence of length $\kappa$ in $\mathfrak{K}$ has the limit in 
$\mathfrak{L}$.
\item [(L2)] Every $\mathfrak{L}$-object is the limit of an inverse sequence in $\mathfrak{K}$.
\item
[(L3)] For every inverse sequence $\vec{x}=\{X_\alpha\colon \alpha<\kappa\}$ in $\mathfrak{K}$ with $X = \lim \vec{x}$ in $\mathfrak{L}$, for every $\mathfrak{K}$-object $Y$,
for every $\mathfrak{L}$-arrow $f\colon X \to Y$ there exist $\alpha<\kappa$ and a $\mathfrak{K}$-arrow $f'\colon  X_\alpha \to Y$ such
that $f = f'\circ  x_\alpha,$ where $x_\alpha\colon X\to X_\alpha$ is the projection.
\end{enumerate}

Now, an $\mathfrak{L}$-object $U$ will be called {\em $\mathfrak{K}$-generic} if
\begin{enumerate}
\item [(G1)] $\mathfrak{L}(U,X) \ne\emptyset$ for every $X \in \text{Obj} (\mathfrak{K})$.
\item
[(G2)] For every $\mathfrak{K}$-arrow $f\colon Y \to X$, for every $\mathfrak{L}$-arrow $g\colon U \to X$ there exists an
$\mathfrak{L}$-arrow $h\colon U \to Y$ such that $f \circ h = g$.
\end{enumerate}

For example, consider a category $\mathfrak{Fin}$ consisting of finite nonempty discrete spaces as a full subcategory of the category $\mathfrak{Comp}$ of compact metric spaces and continuous surjections. It is rather obvious that the Cantor set is the $\mathfrak{Fin}$-generic object.

Another example is obtained if we consider a category $\fL$ consisting of compact spaces of weight not greater than the continuum, with continuous surjections and its subcategory $\mathfrak{Comp}$.
The following results of Parovi\v{c}enko and Negrepontis imply that $\omega^*$ is {$\mathfrak{Comp}$-generic}.

\begin{itemize}
	\item Every compact metric space  is a continuous image of $\omega^*$ (see \cite{par}, \cite{neg}).
\item Assuming [CH], $\omega^*$ is compact Hausdorff space  of weight $\omega_1$ such that for every two continuous surjections $f\colon \omega^*\to X$ and $g\colon Y\to X$ with $X$ and $Y$  compact metric, there  exists a continuous  surjection $h\colon \omega^*\to Y$ such that $g\circ h=f$ (see \cite{neg}).
\end{itemize}

In \cite{BKW} the authors have considered a category
such that $K$ is fixed compact $0$-dimensional second countable topological space and the objects  are continuous mappings $f\colon  K\to X$, from $K$ to a compact $0$-dimensional metric space and a morphism  between two objects $f_0\colon  K\to X_0$ 
and $f_1\colon K\to X_1$ is
a continuous measure preserving surjection $q \colon  X_1\to X_0$ satisfying $q\circ f_1 = f_0.$ They
showed that an embedding $\eta \colon K\to 2^\omega$ such that the image $\eta[K]$ is of measure zero in $2^\omega$ is generic object in the defined category.

The concept of a generic object is strictly related to \fra sequences.

\begin{theorem}\label{fra-gen}
Assume that  $\mathfrak{K} \subs \mathfrak{L}$ satisfy {\rm(L0)}--{\rm(L3)} and $\mathfrak{K}$ has the amalgamation property. If  $\vec{u}$ is a \fra sequence in $\mathfrak{K}$, then $U=\lim\vec{u}$ is a $\mathfrak{K}$-generic.
\end{theorem}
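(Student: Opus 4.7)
The plan is to verify conditions (G1) and (G2) directly, using the factorization property (L3) to reduce $\mathfrak{L}$-arrows out of $U$ to $\mathfrak{K}$-arrows out of some $U_\alpha$, and then exploiting amalgamation together with the \fra absorption property (A).

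For (G1), given $X\in\obj{\fK}$, property (U) of the \fra sequence yields some $\alpha<\kappa$ and a $\fK$-arrow $f\colon U_\alpha\to X$. Since $U=\lim\vec u$ in $\fL$, we have the projection $u_\alpha\colon U\to U_\alpha$, which is an $\fL$-arrow, and $f\circ u_\alpha\in\fL(U,X)$ witnesses $\fL(U,X)\nnempty$.

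For (G2), suppose $f\colon Y\to X$ is a $\fK$-arrow and $g\colon U\to X$ is an $\fL$-arrow. First apply (L3) to $g$: we obtain $\alpha<\kappa$ and a $\fK$-arrow $g'\colon U_\alpha\to X$ with $g=g'\circ u_\alpha$. Now $f\colon Y\to X$ and $g'\colon U_\alpha\to X$ are two $\fK$-arrows into the common codomain $X$, so amalgamation supplies $Z\in\obj\fK$ and $\fK$-arrows $f'\colon Z\to Y$, $g''\colon Z\to U_\alpha$ with $f\circ f'=g'\circ g''$. Next, property (A) of the \fra sequence applied to $g''\colon Z\to U_\alpha$ furnishes $\beta\ge\alpha$ and a $\fK$-arrow $h\colon U_\beta\to Z$ satisfying $u_\alpha^\beta=g''\circ h$. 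Define $h'=f'\circ h\circ u_\beta\colon U\to Y$, which is an $\fL$-arrow as a composition involving the projection $u_\beta$. The verification
\[
f\circ h' \;=\; f\circ f'\circ h\circ u_\beta \;=\; g'\circ g''\circ h\circ u_\beta \;=\; g'\circ u_\alpha^\beta\circ u_\beta \;=\; g'\circ u_\alpha \;=\; g
\]
uses only the amalgamation equality, the factorization from (A), and the compatibility $u_\alpha^\beta\circ u_\beta=u_\alpha$ of an inverse system.

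I do not expect a serious obstacle: the argument is essentially the standard back-and-forth extraction but written one-sided, and (L0) (all $\fL$-arrows are epi) plays no explicit role in the existence of $h'$ — it would only be needed if one wanted uniqueness of such factorizations. The slightly subtle point is recognizing that (L3) is precisely what allows a generic $\fL$-arrow out of the limit $U$ to be handled as if it were a $\fK$-arrow at a finite stage, which is exactly where the \fra absorption (A) becomes applicable; once that reduction is made, the rest is a routine diagram chase.
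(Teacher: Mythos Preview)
Your proof is correct and follows essentially the same route as the paper's: factor the $\fL$-arrow $g$ through some $U_\alpha$ via (L3), amalgamate with $f$, absorb the resulting arrow into the sequence via (A), and compose with the projection $u_\beta$. Your write-up is in fact slightly more complete than the paper's, which omits the explicit verification of (G1) and the final equality chain for $f\circ h' = g$.
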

\begin{pf}
	Assume that $\mathfrak{K}$ is subcategory of category $\mathfrak{L}$ satisfying conditions (L0)--(L3). 
Let $\vec{u}$ be a \fra sequence in $\mathfrak{K}$. By (L1),  $U=\lim\vec{u}$ is a $\mathfrak{L}$-object. Consider $\mathfrak{K}$-arrow $f\colon Y\to X$ and $\mathfrak{L}$-arrow $g\colon U\to X$. Then applying condition (L3) we get $\alpha<\omega_1$ and $\mathfrak{K}$-arrow $g'\colon U_\alpha\to Y$   such that $g'\circ u_\alpha^\infty=g$. Amalgamation property implies that there exist $\mathfrak{K}$-arrows $g''\colon Z\to U_\alpha$ and $f'\colon Z\to Y$ such that $g'\circ g''=f\circ f'$.  By condition (A) of the \fra sequence $\vec{u}$ there are $\beta\ge \alpha$ and $\mathfrak{K}$-arrow $h'\colon U_\beta \to Z$ such that $g''\circ h'=u^\beta_\alpha$. Let $h=f'\circ h'\circ u_\beta^\infty$.  
\labelmargin-{1pt}
\amor
$$\xymatrix@R=2.5pc@C=2.8pc
{&U\ar[d]^(.56){u_\alpha^\infty}\ar[dr]^(.57){u_\beta^\infty}\ar[ldd]_g&\\
&U_\alpha\ar[ld]^(.41){g'}& U_\beta\ar[d]^{h'}\ar[l]^{u_\alpha^\beta}\\
X&&Z\ar[ul]^{g''}\ar[dl]^{f'}\\
&Y\ar[ul]^f &}$$
\end{pf}

\begin{theorem}\label{gen-fra}
	Assume that  $\mathfrak{K} \subs \mathfrak{L}$ satisfy {\rm(L0)}--{\rm(L3)}. 
	If  $U=\lim\vec{u}$ is $\mathfrak{K}$-generic, then $\vec{u}$ is a \fra sequence in $\mathfrak{K}$.
\end{theorem}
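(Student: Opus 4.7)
The plan is to verify the two defining conditions (U) and (A) of a $\kappa$-\fra sequence for $\vec u$, using the genericity clauses (G1) and (G2) together with the compatibility axioms (L0)--(L3). Both verifications follow the same pattern: apply genericity at the level of $U$ to produce an $\fL$-arrow, then use (L3) to pull that arrow back to a $\fK$-arrow living on a bounded stage of $\vec u$.

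For (U), given $X\in\obj{\fK}$, clause (G1) supplies an $\fL$-arrow $g\colon U\to X$. Axiom (L3) then factors $g$ through some $U_\alpha$ by a $\fK$-arrow $g'\colon U_\alpha\to X$, so $\fK(U_\alpha,X)\ne\emptyset$, which is exactly (U). For (A), I fix $\alpha<\kappa$ and a $\fK$-arrow $f\colon Y\to U_\alpha$. The projection $u_\alpha\colon U\to U_\alpha$ is an $\fL$-arrow, so (G2) applied to $f$ and $u_\alpha$ produces an $\fL$-arrow $h\colon U\to Y$ with $f\circ h=u_\alpha$. Applying (L3) to $h$, I obtain $\gamma<\kappa$ and a $\fK$-arrow $g\colon U_\gamma\to Y$ with $h=g\circ u_\gamma$. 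Setting $\beta=\max(\alpha,\gamma)$ and $g'=g\circ u_\gamma^\beta$, I have $\beta\goe\alpha$ and $h=g'\circ u_\beta$, whence
\[
(f\circ g')\circ u_\beta \;=\; f\circ h \;=\; u_\alpha \;=\; u_\alpha^\beta\circ u_\beta.
\]

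The main delicate point is the final cancellation of $u_\beta$. Both $f\circ g'$ and $u_\alpha^\beta$ are $\fK$-arrows $U_\beta\to U_\alpha$ and therefore also $\fL$-arrows, since $\fK$ is a \emph{full} subcategory of $\fL$; meanwhile $u_\beta$, being an $\fL$-arrow, is epi by (L0). Cancelling $u_\beta$ in $\fL$ yields $f\circ g'=u_\alpha^\beta$, which is exactly the equation required by (A). I expect this epi-cancellation, together with the routine bookkeeping needed to promote $\gamma$ to $\beta\goe\alpha$ via the bonding maps $u_\gamma^\beta$, to be the only step that requires genuine care; everything else is a straightforward diagram chase combining (G1), (G2), and (L3).
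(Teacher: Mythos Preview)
Your proof is correct and follows essentially the same route as the paper's: verify (U) by combining (G1) with (L3), and verify (A) by applying (G2) to the projection $u_\alpha$, factoring the resulting $\fL$-arrow through some $U_\gamma$ via (L3), and then cancelling the epi projection using (L0). The only differences are cosmetic: you make the index adjustment $\beta=\max(\alpha,\gamma)$ explicit (the paper simply writes ``we can assume $\beta>\alpha$''), and you spell out why the epi cancellation is legitimate in $\fL$ (the paper takes this for granted).
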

\begin{pf}
Condition (G1) combined with (L3) shows that the sequence   $\vec{u}$ satisfies (U). In order to check (A), fix
 an ordinal $\alpha<\kappa$ and a $\mathfrak{K}$-arrow $f\colon Y\to U_\alpha$. By condition (G2) there is $\mathfrak{L}$-arrow
 $g\colon U\to Y$ such that $f\circ g=u^\infty_\alpha$. Using (L3) we get $\beta<\kappa$ and $\mathfrak{K}$-arrow $h\colon U_\beta\to Y$
 such that $g=h\circ u^\infty_\beta$. We can assume that $\beta>\alpha$.
 \amor
 $$\xymatrix{&U\ar[dr]^{u^\infty_\beta}\ar[dl]_{u^\infty_\alpha}\ar[dd]^{g}&\\
 	U_\alpha && U_\beta\ar[dl]^{h}\\
 	&Y\ar[ul]^{f}&
 }$$
 
Since   $f\circ h\circ u^\infty_\beta=u^\beta_\alpha\circ u^\infty_\beta$ and $ u^\infty_\beta$ is an epimorphism we get $f\circ h=u^\beta_\alpha.$
\end{pf}

\section{Categories of continuous mappings}\label{sekcja:2}

Fix a compact 0-dimensional space $K$ such that $w(K)\le 2^\omega$.  We define the
categories $\mathfrak{L}_K,\mathfrak{C}_K$ as follows.

The objects of $\mathfrak{L}_K$ are continuous mappings $f\colon  K\to X$, where $X$ is compact 0-dimensional with $w(X)\leq \omega_1$.
Given two $\mathfrak{L}_K$-objects $f_0\colon  K \to X_0$, $f_1 \colon  K \to X_1$, an $\mathfrak{L}_K$-arrow from $f_1$ to $f_0$ is
a continuous surjection $q \colon  X_1\to X_0$ satisfying $q \circ  f_1 = f_0$. The composition in $\mathfrak{L}_K$ is the usual composition of mappings.  We define $\mathfrak{C}_K$ to be the full
subcategory of $\mathfrak{L}_K$ whose objects are those $f \colon  K \to X$ with $w(X)\leq\omega$.

\begin{lemma}\label{l:1}
For any space $K$ the category $\mathfrak{C}_K$ 
is directed and has the amalgamation property.
\end{lemma}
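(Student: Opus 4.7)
The plan is to verify both conditions by standard categorical constructions: the product for directedness and the pullback for amalgamation. The key observation is that the class of compact metrizable $0$-dimensional spaces is closed under finite products and closed subspaces, so both constructions remain inside $\mathfrak{C}_K$.

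For directedness, given two $\mathfrak{C}_K$-objects $f_i \colon K \to X_i$ for $i = 0,1$, I would form the product $Z = X_0 \times X_1$ together with the diagonal map $g = (f_0, f_1) \colon K \to Z$. Since $Z$ is compact metric and $0$-dimensional, $g$ is a $\mathfrak{C}_K$-object. The canonical projections $\pi_i \colon Z \to X_i$ are continuous surjections satisfying $\pi_i \circ g = f_i$, providing the required $\mathfrak{C}_K$-arrows from $g$ to each $f_i$.

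For amalgamation, given continuous surjections $f \colon B \to A$ and $g \colon C \to A$ realizing $\mathfrak{C}_K$-arrows from objects $b \colon K \to B$ and $c \colon K \to C$ to a common object $a \colon K \to A$, I would form the fibered product
\[
D = \{(x,y) \in B \times C : f(x) = g(y)\},
\]
a closed subset of $B \times C$ and hence still compact metric $0$-dimensional. Define $d \colon K \to D$ by $d(k) = (b(k), c(k))$; this is well-defined since $f \circ b = a = g \circ c$. The two restricted projections $f' \colon D \to B$ and $g' \colon D \to C$ are continuous, satisfy $f \circ f' = g \circ g'$ by construction, and one checks $f' \circ d = b$ and $g' \circ d = c$ directly from the formula for $d$.

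The only step requiring any genuine verification is that $f'$ and $g'$ are \emph{surjective}, which is needed because all $\mathfrak{C}_K$-arrows must be continuous surjections. This is where the assumption that $f$ and $g$ are themselves surjective is used: given $x \in B$, surjectivity of $g$ produces some $y \in C$ with $g(y) = f(x)$, so $(x,y) \in D$ projects to $x$ under $f'$, and symmetrically for $g'$. Apart from this mild observation, the argument is purely formal.
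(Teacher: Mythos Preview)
Your proof is correct and follows essentially the same approach as the paper: the product with the diagonal map for directedness, and the pullback (fibered product) with projections for amalgamation. Your explicit verification that the pullback projections are surjective, and your remark that compact $0$-dimensional metric spaces are closed under finite products and closed subspaces, are details the paper leaves implicit.
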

\begin{pf}
Let $f\colon K\to X$,  $g\colon K\to Y$, $h\colon K\to Z$ be  $\mathfrak{C}_K$-objects and $q_1\colon f\to h$, $q_2\colon g\to h$ be $\mathfrak{C}_K$-arrows. By the definition $q_1\colon X\to Z$, $q_2\colon Y\to Z$ and $q_1\circ f=h=q_2\circ g$. Consider pullback of  $q_1$ and $q_2$, i.e. the space $W=\{(x,y)\in X\times Y\colon q_1(x)=q_2(y)\}$ together with the pair of projections $f_1\colon W\to X$ and $g_1\colon W\to Y$. Specifically, $f_1(x,y)=x$, $g_1(x,y)=y$. Then $q_1\circ f_1=q_2\circ g_1$.
\SelectTips{cm}{11}
$$\xymatrix{
		& &X\ar@{->>}[dr]^{q_1} &\\
		W\ar@{->>}[rru]^{f_1}\ar@{->>}[drr]_{g_1} & K\ar[ur]_f\ar[rd]^g\ar[rr] ^h\ar[l]_(.36){k}&& Z\\
		&& Y\ar@{->>}[ur]_{q_2} & }$$
{From} the pullback property it follows that there exists a unique $k\colon K\to W$ such that  $f=f_1\circ k$ and $g=g_1\circ k$.  Therefore, $f_1$ is $\mathfrak{C}_k$-arrow from $k$ to $f$ and $g_1$ is $\mathfrak{C}_k$-arrow from $k$ to $g$  and the following diagram
	 $$\xymatrix{&f\ar[dr]^{q_1}&\\
	 k \ar[ur]^{f_1} \ar[dr]_{g_1}&&h\\
	 &g\ar[ur]_{q_2}&
	 }$$
	 commutes. This means that the category $\mathfrak{C}_K$ has the  amalgamation property.
	 
	 In order to show that $\mathfrak{C}_K$ is directed, let $f\colon K\to X$ and 
	$g\colon K\to Y$ be continuous mappings from $K$ to  metric spaces $X,Y$, respectively.
Set $Z=X\times Y$ and $h=(f,g)$. The map $h\colon K\to Z$ is continuous and the projections $\pi_X\colon Z\to X$, $\pi_Y\colon Z\to Y$ are continuous surjections such that $\pi_X\circ h=f$ and $\pi_Y\circ h=g$, which completes the proof that $\mathfrak{C}_K$ is directed.
\end{pf}

\begin{theorem}\label{th:5}
Assuming CH, there exists a continuous $\omega_1$-\fra sequence in $\mathfrak{C}_K.$
\end{theorem}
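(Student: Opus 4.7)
My plan is to invoke Theorem~\ref{exists} with $\kappa=\omega_1$ and $\mathfrak{K}=\mathfrak{C}_K$. Lemma~\ref{l:1} already provides directedness and the amalgamation property, so I only need to check two things: $\omega_1$-completeness of $\mathfrak{C}_K$, and the existence of a dominating family of size at most $\omega_1$.

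For $\omega_1$-completeness, I would take a countable inverse sequence $\{f_\alpha\colon K\to X_\alpha\}_{\alpha<\delta}$ in $\mathfrak{C}_K$ with bonding surjections $q_\alpha^\beta\colon X_\beta\to X_\alpha$ and form the topological inverse limit $X_\infty=\varprojlim X_\alpha$ as a closed subspace of the countable product $\prod_{\alpha<\delta}X_\alpha$. This is automatically compact, Hausdorff, $0$-dimensional, and of countable weight. The universal property gives a canonical continuous $f_\infty\colon K\to X_\infty$ with $\pi_\alpha\circ f_\infty=f_\alpha$, and since a compact inverse system with surjective bonds has surjective projections, $f_\infty$ is a $\mathfrak{C}_K$-object and serves as the limit of the sequence in $\mathfrak{C}_K$. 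This part is routine and not the main obstacle.

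The main step, and the place where CH enters, is the cardinality bound. Under CH we have $w(K)\leq 2^\omega=\omega_1$, so the clopen algebra $\CO(K)$ has cardinality $\leq\omega_1$ (every clopen set is a finite union of members of a basis of size at most $\omega_1$). I would then argue that there are at most $2^\omega=\omega_1$ compact metric $0$-dimensional spaces up to homeomorphism, and that every continuous $f\colon K\to X$ into such an $X$ is determined by the preimages of a countable clopen basis of $X$, giving $|C(K,X)|\leq|\CO(K)|^\omega=\omega_1^\omega=\omega_1$ under CH. An analogous bound holds for continuous surjections between two fixed compact metric $0$-dimensional spaces. After passing to a set of representatives of isomorphism classes of $\mathfrak{C}_K$-objects, one obtains at most $\omega_1$ objects and at most $\omega_1$ arrows. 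I would then take $\mathcal{F}$ to be the collection of \emph{all} $\mathfrak{C}_K$-arrows, which is a set of size $\leq\omega_1$ and is trivially dominating: condition~(i) is witnessed by identity arrows (so $\Cod(\mathcal{F})$ equals the whole object class), and in (ii) one may take $g=\ide_Y$, giving $f\circ\ide_Y=f\in\mathcal{F}$. Theorem~\ref{exists} then produces the desired continuous $\omega_1$-Fra\"iss\'e sequence.

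The hard part is really the cardinality bookkeeping of the third paragraph: one has to be careful that each of the counts ($|\CO(K)|$, isomorphism classes of compact metric $0$-dimensional spaces, maps from $K$ into such a space, maps between two such spaces) is bounded by $\omega_1$, and that CH is precisely what collapses $2^\omega$ to $\omega_1$ in each of them. Once this is in hand, the application of Theorem~\ref{exists} is immediate.
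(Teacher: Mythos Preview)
Your proposal is correct and follows essentially the same route as the paper: the paper's proof also invokes Theorem~\ref{exists} together with Lemma~\ref{l:1}, declares $\omega_1$-completeness of $\mathfrak{C}_K$ to be obvious, and uses CH only to bound by $\omega_1$ the number of continuous maps from $K$ into compact $0$-dimensional metric spaces. Your write-up simply spells out the completeness and counting steps in more detail; one small wrinkle is that once you pass to a set of representatives, conditions~(i) and~(ii) for ``dominating'' are witnessed by isomorphisms rather than literal identities, but this is a trivial adjustment.
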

\begin{pf}
Obviously, $\mathfrak{C}_K$	is  $\omega_1$-complete. Since we assume CH, there are $\omega_1$ continuous maps from the compact $0$-dimensional  space $K$ to a compact 0-dimensional metric space. By Theorem \ref{exists} and Lemma \ref{l:1} there  exists a continuous $\omega_1$-\fra sequence in $\mathfrak{C}_K$.
\end{pf}

\nic{The following two lemmas seem to be well known.

\begin{lemma}\label{l:3}
	If $f\colon X\to Y$ is an irreducible map then
	\begin{enumerate}
		\item $f[G]$ is regular closed for each regular closed set $G$;
		\item If $G_0, G_1$ are regular closed sets with $\int G_0\cap\int G_1=\emptyset$ then $\int f[G_0]\cap\int f[G_1]=\emptyset;$
		\item if  $G_0, G_1$ are regular closed sets and $G_0\cup G_1=X$ then $\int f[G_0]\cup\int f[G_1]$ is dense set.
	\end{enumerate}
\end{lemma}

\begin{lemma}\label{l:21}
	If $G_0, G_1$ are regular closed subsets of $X$ such that $\int G_0\cup\int G_1$ is dense set and $\int G_0\cap\int G_1=\emptyset$, then
	$\int G_1=X\setminus G_0$ and $\int G_0=X\setminus G_1$ and the projection $h\colon G_0\times\{0\}\oplus G_1\times\{1\}\to X$ is continuous.
\end{lemma}}	
A compact space is an $F$\textit{-space} if disjoint open $F_\sigma$-subsets have disjoint closures.

\begin{theorem}[\cite{Nev-Lloyd}]\label{project}
	If $X,Y,Z$ are compact 0-dimensional spaces and $f\colon Y\to Z$ is a surjection and $h\colon X\to Z$ is a map and $Y$ is a metrizable space and $X$ is a F-space, then there is a map $g\colon X\to Y$ such that $f\circ g=h.$
	\amor
	$$\xymatrix{
		X\ar[d]_h \ar@{-->}[dr]^g &\\
		Z & Y\ar@{->>}[l]^f}$$
		
\end{theorem}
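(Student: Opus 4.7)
The plan is to build $g$ as the limit of a coherent sequence of clopen partitions of $X$ indexed by successively finer clopen partitions of $Y$, with the F-space property of $X$ used at each step to effect the refinement. Since $Y$ is compact, $0$-dimensional and metrizable, fix a refining sequence of finite clopen partitions $\mathcal{P}_0\prec\mathcal{P}_1\prec\cdots$ of $Y$ whose mesh tends to zero. The goal is to construct clopen partitions $\{W_U: U\in\mathcal{P}_n\}$ of $X$ with $W_U\subs h^{-1}(f[U])$ and $W_{U'}\subs W_U$ whenever $U'\in\mathcal{P}_{n+1}$ is contained in $U\in\mathcal{P}_n$. Given these, $g\colon X\to Y$ is defined so that $g(x)$ is the unique point of $\bigcap_n U_n(x)$, where $U_n(x)\in\mathcal{P}_n$ is the cell containing $x$ in the level-$n$ partition; continuity follows at once from the clopen condition, and $f\circ g=h$ because both $f(g(x))$ and $h(x)$ lie in $\bigcap_n f[U_n(x)]=\{f(g(x))\}$, the last equality holding by compactness.

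The inductive step reduces to the following splitting lemma: if $X'$ is a compact $0$-dimensional F-space and $A_1,\dots,A_k$ are closed subsets covering $X'$ such that every complement $X'\setminus A_j$ is a cozero set, then there is a clopen partition $W_1,\dots,W_k$ of $X'$ with $W_j\subs A_j$. For $k=2$ the sets $X'\setminus A_1$ and $X'\setminus A_2$ are disjoint cozero sets, so the F-space property gives that their closures are disjoint, and $0$-dimensionality produces a clopen $W$ separating these closures, whence $W\subs A_1$ and $X'\setminus W\subs A_2$. For general $k$ one applies the $k=2$ case to $A_1$ versus $A_2\cup\cdots\cup A_k$ (whose complement is a finite intersection of cozero sets, hence cozero) and then iterates inside the piece assigned to the union.

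To apply this lemma at step $n$, take $X'=W_U$, which is itself a compact $0$-dimensional F-space because it is clopen in $X$, and take $A_{U'}=W_U\cap h^{-1}(f[U'])$ for $U'\in\mathcal{P}_{n+1}$ with $U'\subs U$; these are closed and cover $W_U$ since $f[U]=\bigcup_{U'\subs U} f[U']$ and $W_U\subs h^{-1}(f[U])$. The cozero hypothesis on $W_U\setminus A_{U'}=W_U\cap h^{-1}(Z\setminus f[U'])$ is the crux, and rests on the observation that $Z$ is automatically compact metrizable: as a Hausdorff continuous image of the compact metric space $Y$ it admits the countable base $\{Z\setminus f[Y\setminus V]\}$ as $V$ ranges over a countable clopen base of $Y$. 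Consequently every closed subset of $Z$ is $G_\delta$, every open subset is $F_\sigma$, and the pulled-back sets $h^{-1}(Z\setminus f[U'])$ are cozero in $X$, hence in the clopen subspace $W_U$. This is precisely where the F-space assumption on $X$ does the real work; the main obstacle is recognizing that the metrizability of $Z$, not stated among the hypotheses but forced by them, is what feeds the cozero condition needed by the splitting lemma.
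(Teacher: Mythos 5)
Your argument is correct. Note first that the paper itself gives no proof of this statement: it is imported verbatim from Neville--Lloyd \cite{Nev-Lloyd}, so there is no internal proof to compare against. Your construction is a clean, self-contained version of the standard projectivity argument: a refining sequence of finite clopen partitions of $Y$ with mesh tending to $0$, a coherent lifting to clopen partitions $\{W_U\}$ of $X$ with $W_U\subs h^{-1}(f[U])$, and a splitting lemma whose $k=2$ case is exactly where the F-space property enters (disjoint cozero sets have disjoint closures, then $0$-dimensionality plus normality separates the closures by a clopen set). The two points that genuinely need care are both handled: the observation that $Z$ is forced to be metrizable (as a Hausdorff continuous image of the compact metric $Y$), which is what makes the complements $h^{-1}(Z\setminus f[U'])$ cozero rather than merely open, and the identity $\bigcap_n f[U_n(x)]=f\bigl[\bigcap_n U_n(x)\bigr]$ for a decreasing chain of compacta, which gives $f\circ g=h$. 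It is worth noting that your proof is structurally the same kind of argument the authors carry out by hand inside their proof of Theorem~\ref{generic} (the inductive construction of the maps $h_n$ via clopen partitions $\Pee_n$ of $\omega^*$, again using the F-space property to separate the relevant $F_\sigma$ sets), so your write-up in effect supplies the elementary special case that the paper outsources to \cite{Nev-Lloyd}.
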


A subset $P$ of a topological space $X$ is called a \textit{P-set} if the intersection of countably many neighborhoods of $P$ is a neighborhood of $P$.

\begin{theorem}\label{fra-sequ}
Assume that  $\vec{\phi}=(\phi_\alpha\colon \alpha<\omega_1)$  is a continuous \fra $\omega_1$-sequence in $\mathfrak{C}_K$, where $\phi_\alpha\colon K\to U_\alpha$ for each $\alpha<\omega_1.$  Then 
\begin{enumerate}
\item[{\rm(1)}] $\vec{u}=(U_\alpha\colon \alpha<\omega_1)$ is a {\fra}sequence in the category $\mathfrak{Comp}$, whenever $K$ is F-space.
\item[{\rm(2)}]If $\w(K)\leq \omega_1,$ then the limit map $\phi_{\omega_1}\colon K\to \lim \vec{u}$ has a left inverse i.e. there is $r\colon \lim \vec{u}\to K$
such that $r\circ \phi_{\omega_1}=\ide_K$,
\item[{\rm(3)}] The image $\phi_{\omega_1}[K]$ is a nowhere dense P-set in $U_{\omega_1}=\lim \vec{u}$. 
\end{enumerate}

\end{theorem}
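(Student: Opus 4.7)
For (1), the plan is to verify the two axioms of a \fra sequence in $\mathfrak{Comp}$ for $\vec u = (U_\alpha)$. Axiom (U) is essentially free: any compact metric zero-dimensional $X$ admits a constant $\mathfrak{C}_K$-object $K\to X$, whence (U) of $\vec\phi$ produces some $\alpha$ and a continuous surjection $U_\alpha\to X$. Axiom (A) is where the $F$-space hypothesis enters. Given $\alpha<\omega_1$ and a continuous surjection $f\colon Y\to U_\alpha$ with $Y$ compact metric zero-dimensional, Theorem~\ref{project} lifts $\phi_\alpha$ through $f$, producing $h\colon K\to Y$ with $f\circ h=\phi_\alpha$; now $f$ is a $\mathfrak{C}_K$-arrow from $h$ to $\phi_\alpha$, and (A) of $\vec\phi$ supplies the required factorization of $u_\alpha^\beta$ through $Y$.

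For (2), the idea is to realize $K$ itself as the $\mathfrak{L}_K$-limit of a continuous sequence drawn from $\mathfrak{C}_K$. Because $\w(K)\le\omega_1$, a base of clopen sets for $K$ indexed by $\omega_1$ yields a tower of clopen equivalence relations whose successive quotients $V_\alpha$ are compact metric zero-dimensional, the projections $v_\alpha\colon K\to V_\alpha$ are surjective, the sequence $\vec v=(v_\alpha)_{\alpha<\omega_1}$ is continuous in $\mathfrak{C}_K$, and its $\mathfrak{L}_K$-limit is $\ide_K$. Theorem~\ref{uni} then provides an arrow of sequences $\vec\phi\to\vec v$, whose limit in $\mathfrak{L}_K$ is a continuous surjection $r\colon U_{\omega_1}\to K$ satisfying $r\circ\phi_{\omega_1}=\ide_K$.

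For (3) the trick in both clauses is to apply (A) to carefully chosen disjoint-union gadgets over $U_\alpha$. Suppose first that $\phi_{\omega_1}[K]$ has nonempty interior; then it contains a basic clopen $W=(u_\alpha^{\omega_1})^{-1}(W')$ with $W'$ a nonempty clopen in $U_\alpha$, and surjectivity of $u_\alpha^{\omega_1}$ forces $W'\subseteq\phi_\alpha[K]$. Fix $w_0\in W'$, put $Y:=U_\alpha\sqcup\{y^*\}$, let $f\colon Y\to U_\alpha$ be the identity on $U_\alpha$ and send $y^*$ to $w_0$, and take $h:=\phi_\alpha$ into the $U_\alpha$ summand; then $f$ is a $\mathfrak{C}_K$-arrow. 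Applying (A) to $f$ yields $\beta\ge\alpha$ and a continuous surjection $g\colon U_\beta\to Y$ with $g\circ\phi_\beta=h$ and $u_\alpha^\beta=f\circ g$. Any $z\in g^{-1}(y^*)$ satisfies $u_\alpha^\beta(z)=w_0$, so lifting along $u_\beta^{\omega_1}$ lands inside $W\subseteq\phi_{\omega_1}[K]$, producing a $k\in K$ with $\phi_\beta(k)=z$; but then $h(k)=g(\phi_\beta(k))=y^*\notin U_\alpha$, contradicting $h=\phi_\alpha$. For the $P$-set clause, reduce any countable family of neighborhoods of $\phi_{\omega_1}[K]$ to clopen neighborhoods of the common-stage form $V_n=(u_\alpha^{\omega_1})^{-1}(W_n)$ with $\phi_\alpha[K]\subseteq W_n\subseteq U_\alpha$ clopen. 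This time set $Y:=U_\alpha\sqcup\phi_\alpha[K]'$, where $\phi_\alpha[K]'$ is a disjoint homeomorphic copy attached as a clopen summand, let $f$ be the identity on $U_\alpha$ and the canonical identification on $\phi_\alpha[K]'$, and let $h\colon K\to\phi_\alpha[K]'\subseteq Y$ be the natural lift. Applying (A) produces $\beta\ge\alpha$ and $g\colon U_\beta\to Y$; the clopen set $V^*:=(u_\beta^{\omega_1})^{-1}(g^{-1}(\phi_\alpha[K]'))$ is then a clopen neighborhood of $\phi_{\omega_1}[K]$ whose $u_\alpha^{\omega_1}$-image equals $\phi_\alpha[K]\subseteq\bigcap_n W_n$, so $V^*\subseteq\bigcap_n V_n$, as required.

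The only genuinely delicate step is the choice of the auxiliary $\mathfrak{C}_K$-objects in part (3): in both cases one adjoins a ``spare copy'' to $U_\alpha$ so that amalgamation is compelled to manufacture a clopen witness at a later stage $U_\beta$. Everything else is routine inverse-limit bookkeeping together with the defining axioms (U) and (A) of the \fra sequence.
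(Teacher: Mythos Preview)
Your proof is correct and follows essentially the same strategy as the paper: part (1) uses Theorem~\ref{project} to lift $\phi_\alpha$ and then invokes (A) of $\vec\phi$; part (2) represents $\ide_K$ as the limit of a continuous $\mathfrak{C}_K$-sequence and applies Theorem~\ref{uni}; and part (3) applies (A) to disjoint-union gadgets over $U_\alpha$ to manufacture clopen witnesses at a later stage. The only cosmetic differences are in the gadgets for (3): the paper adjoins a full copy $W\times\{1\}$ (respectively $V\times\{1\}$ with $V=\bigcap_n V_n$) rather than your single point $y^*$ (respectively the copy $\phi_\alpha[K]'$), but both choices work for the same reason.
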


\begin{pf}(1)
It is immediate that the sequence $\vec{\phi}$ induces $\omega_1$-sequence $\vec{u}=(U_\alpha\colon\alpha<\omega_1)$ in the category $\mathfrak{Comp}$.  
 Let $Y$ be a compact metric space and $p\colon Y\to U_\alpha$ be a continuous surjective map. By Theorem \ref{project} there is a continuous map $g\colon K\to Y$ such that $p\circ g=\phi_\alpha$. In other words, in the category $\mathfrak{C}_K$, $f$ is a morphism from $g$ to $\phi_\alpha$. So, there exists $\beta\ge \alpha$ and a morphism $h\colon \phi_\beta\to g$ such that $p\circ h=u_\alpha^\beta$ because $\vec{\phi}$ is a {\fra}sequence in $\mathfrak{C}_K$.

\amor
\labelmargin-{1.8pt}
$$\xymatrix@R=2.8pc@C=2.7pc
{&&K\ar[dll]_{\phi_0}\ar[dl]_(.6){\phi_1}\ar[dr]_(0.6){\phi_\alpha}\ar[drrr]_{\phi_\beta}\ar[dd]_(.66)g\ar[drrrrr]^
	{\phi_{\omega_1}}&&&\\ 
\labelmargin+{1pt}
U_0& U_1\ar@{->>}[l]^{u^1_0}&\dots\ar@{->>}[l]^{u^2_1}\ar@{->>}[l]& U_\alpha\ar@{->>}[l]&\dots\ar@{->>}[l]& U_\beta\ar@{->>}[dlll]^h\ar@{->>}[l]&\dots\ar@{->>}[l]&U_{\omega_1}\ar@/^1.6pc/[ll]^{u_\beta}\\
&& Y\ar@{->>}[ru]^p&&&
}$$

(2) We can assume that $K$ is the limit of a sequence  $\vec{k}=(K_\alpha\colon \alpha<\omega_1)$  in $\mathfrak{Comp}$ 
such that $\ide_K=\lim (p_\alpha\colon \alpha<\omega_1)$ where $p_\alpha$ denotes projections from $K$ onto $K_\alpha$. By Theorem~\ref{uni} there
exists an arrow of sequences $F\colon \vec{\phi}\to (p_\alpha\colon \alpha<\omega_1)$. For the limit map $r=\lim F\colon U_{\omega_1}\to K$ we have 
$r\circ \phi_{\omega_1}=\ide_K$. 

(3) Suppose that there exists a clopen non-empty set $W\subs U_\alpha$ such that  $u_\alpha^{-1}(W)\subs \phi_{\omega_1}[K]$, where $u_\alpha$ is the projection from the limit $U_{\omega_1}$ onto the space $U_\alpha$. Let $Y=(U_\alpha\times\{0\})\oplus (W\times\{1\})$ and define a map $g\colon K\to Y$ by setting $g(x) = (\phi_\alpha(x), 0)$ for every $x\in K.$  
Since the projection 
$p\colon Y\to U_\alpha$ is a morphism from $g$ to $\phi_\alpha$ and $\vec{\phi}$ is a \fra sequence in $\mathfrak{C}_K$ there are $\beta\ge\alpha$ and $h\colon U_\beta\to Y$ such that $p \circ h=u_\alpha^\beta$ and $h\circ \phi_\beta=g.$ Note that 
\begin{multline*}
\emptyset\ne u_\beta^{-1}(h^{-1}(W\times\{1\}))\subs u_\beta^{-1}(h^{-1}(p^{-1}(W)))=
u_\beta^{-1}((p \circ h)^{-1}(W))=\\=
u_\beta^{-1}(( u_\alpha^\beta)^{-1}(W))=u_\alpha^{-1}(W) \subs \phi_{\omega_1}[K].
\end{multline*}
Therefore
$$\phi_{\omega_1}^{-1}(u_\beta^{-1}(h^{-1}(W\times\{1\}))\cap K\ne\emptyset.$$
But $h\circ u_\beta\circ \phi_{\omega_1}=g$ and $g^{-1}(W\times\{1\})=\emptyset$, a contradiction. 

Let $\{W_n\colon n\in\omega\}$ be a family of clopen subsets of the space $U_{\omega_1}$ such that
 $$\phi_{\omega_1}[K]\subs\bigcap\{W_n\colon n\in\omega\}.$$ 
 There exist $\alpha<\omega_1$ and a family 
 $\{V_n\colon n\in\omega\}$  of clopen subsets of  $U_\alpha$ such that $W_n=u_\alpha^{-1}(V_n)$ 
 for every $n\in\omega$. Consider the closed subset  $V=\bigcap\{V_n\colon n\in\omega\}$ of $U_\alpha$. 
 Let $Y=U_\alpha\times\{0\}\oplus V\times\{1\}$  and define a map $g\colon K\to Y$ as follows 
 $g (x) = (\phi_\alpha(x),1)$ for every $x\in K.$  Since $\phi_{\omega_1}(x)\in u_\alpha^{-1} (V)$ 
 for each $x\in K$ the map $g$ is well defined. There exist $\beta\ge \alpha$ and $h\colon U_\beta\to  Y$ 
 such that $p \circ h=u_\alpha^\beta$. Obviously, the set $u_\beta^{-1}(h^{-1}(V\times\{1\}))$ is open in $U_{\omega_1}$. 
 Since $g=h\circ u_\beta\circ \phi_{\omega_1}$ we get $\phi_{\omega_1}[K]\subs u_\beta^{-1}(h^{-1}(V\times\{1\}))$. 
 It remains to check that $$ u_\beta^{-1}(h^{-1}(V\times\{1\}))\subs \bigcap\{W_n\colon n\in\omega\}.$$
We have the following
$$u_\beta^{-1}(h^{-1}(V\times\{1\}))\subs u_\beta^{-1}(h^{-1}(p^{-1}(V)))=u_\alpha^{-1}(V)=\bigcap
\{W_n\colon n\in\omega\},$$
which completes the proof. 
 \end{pf}  
\begin{corollary}[\cite{Bal-Fra-Mills}] Assuming CH, every compact 0-dimensional F-space of weight $\mathfrak{c}$ can be embedded as a nowhere dense closed P-set in $\omega^*.$
\end{corollary}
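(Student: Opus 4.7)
The plan is to instantiate Theorem~\ref{fra-sequ} at the given $0$-dimensional compact F-space $K$ of weight $\mathfrak{c}$ and then identify the resulting limit with $\omega^*$. Under CH, Theorem~\ref{th:5} produces a continuous $\omega_1$-\fra sequence $\vec\phi=(\phi_\alpha\colon K\to U_\alpha)_{\alpha<\omega_1}$ in $\mathfrak{C}_K$; write $\vec u=(U_\alpha)_{\alpha<\omega_1}$ for the underlying inverse sequence in $\mathfrak{Comp}$ and set $U_{\omega_1}=\lim\vec u$. The three clauses of Theorem~\ref{fra-sequ} then apply directly: since $K$ is an F-space, (1) gives that $\vec u$ is itself a \fra sequence in $\mathfrak{Comp}$; since $w(K)\le\mathfrak{c}=\omega_1$, (2) gives that the limit map $\phi_{\omega_1}\colon K\to U_{\omega_1}$ admits a left inverse, hence is a continuous injection between compact Hausdorff spaces and thus a topological embedding; and (3) gives that $\phi_{\omega_1}[K]$ is a nowhere dense P-set in $U_{\omega_1}$.

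What remains is to identify $U_{\omega_1}$ with $\omega^*$. The Parovi\v cenko--Negrepontis results recalled in the preliminaries assert that, under CH, $\omega^*$ is a $\mathfrak{Comp}$-generic object in the ambient category $\mathfrak{L}$ of compact $0$-dimensional spaces of weight at most $\omega_1$ with continuous surjections. By Theorem~\ref{gen-fra}, $\omega^*$ is therefore (isomorphic to) the limit of some \fra sequence in $\mathfrak{Comp}$. A standard back-and-forth argument using Theorem~\ref{uni} in both directions, together with condition (L0) that all $\mathfrak{L}$-arrows are epi, then shows that any two \fra limits of sequences in $\mathfrak{Comp}$ are homeomorphic; hence $U_{\omega_1}\cong\omega^*$. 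Composing this homeomorphism with $\phi_{\omega_1}$ yields the desired embedding of $K$ into $\omega^*$ whose image is a nowhere dense closed P-set.

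The main obstacle in this plan is the uniqueness step $U_{\omega_1}\cong\omega^*$: Theorem~\ref{uni} only delivers an arrow of sequences in each direction, and one must splice these into a genuine back-and-forth tower at the level of the approximating sequences whose limit map is actually a homeomorphism. The key categorical input there is condition (L0), which promotes the one-sided compatibilities obtained at cofinally many levels into the two-sided equalities needed at the limit to invert the cofinal morphism.
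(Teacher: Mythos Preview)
Your argument is correct and tracks the paper's proof closely through the use of Theorem~\ref{th:5} and all three parts of Theorem~\ref{fra-sequ}. The only genuine divergence is in the identification $U_{\omega_1}\cong\omega^*$. The paper goes in the opposite direction from you: it applies Theorem~\ref{fra-gen} (not~\ref{gen-fra}) to conclude that $U_{\omega_1}$ itself is $\mathfrak{Comp}$-generic, and then invokes Negrepontis's characterization of $\omega^*$ directly to identify $U_{\omega_1}$ with $\omega^*$. This sidesteps entirely the ``obstacle'' you flag in your last paragraph: no back-and-forth splicing of two applications of Theorem~\ref{uni} is needed, because the uniqueness is outsourced to Negrepontis's theorem rather than reproved inside the \fra framework. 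Your route is also valid---it amounts to the uniqueness of \fra limits, i.e.\ \cite[Theorem~4.6]{KubFra}, which the paper does cite elsewhere---but it is longer, and your discussion of the obstacle suggests you did not have that citation in hand. In short: both approaches work, but the paper's is cleaner precisely because it avoids the step you found most delicate.
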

\begin{pf}
	Let $K$ be a compact 0-dimensional F-space of weight $\mathfrak{c}$. By Theorem \ref{th:5} there exists a continuous $\omega_1$-\fra sequence 
	$(\phi_\alpha:\alpha<\omega_1)$ in $\mathfrak{C}_K,$ where $\phi_\alpha\colon K\to U_\alpha$ for each $\alpha<\omega_1.$ Theorem \ref{fra-sequ}(1) implies that $\vec{u}=(U_\alpha\colon \alpha<\omega_1)$ is a {\fra}sequence in the category $\mathfrak{Comp}$.
	By Theorem \ref{fra-gen}, the limit $U_{\omega_1}=\lim{\vec{u}}$ is $\mathfrak{Comp}$-generic object. Therefore by a characterization of Negrepontis \cite{neg}, $U_{\omega_1}$ is homeomorphic to $\omega^*.$ Using Theorem \ref{fra-sequ}(2), the map
	$\phi_{\omega_1}\colon K\to\lim{\vec{u}}$ is an embedding and by  Theorem \ref{fra-sequ}(3) the image $\phi_{\omega_1}[K]$ is a
	nowhere dense closed P-set in $\omega^*.$
\end{pf}

\begin{theorem}\label{generic}
Let $\eta\colon K\to\omega^*$ be an embedding such that $\eta[K]$ is a nowhere dense P-set  of $\omega^*$ and $K$ is a
0-dimensional compact space. Then $\eta\colon K\to\omega^*$ is $\mathfrak{C}_K$-generic.
\end{theorem}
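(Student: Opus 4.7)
The plan is to exhibit $\eta$ as the limit of a \fra sequence in $\mathfrak{C}_K$ and then invoke Theorem~\ref{fra-gen}; we work under CH, so that $w(\omega^*)\leq\omega_1$. First I check that $\mathfrak{C}_K\subs\mathfrak{L}_K$ satisfies (L0)--(L3) with $\kappa=\omega_1$: (L0) is immediate since $\mathfrak{L}_K$-arrows are continuous surjections; (L1)--(L3) follow from standard inverse-limit arguments together with $w(\omega^*)\leq\omega_1$, which lets every $\mathfrak{L}_K$-object be written as the limit of a continuous $\omega_1$-inverse sequence of compact metric $0$-dimensional quotients and uses countable weight of the target for the factorisation. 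Applying (L2) to $\omega^*$ itself, I fix a continuous inverse sequence $(U_\alpha)_{\alpha<\omega_1}$ of compact metric $0$-dimensional spaces with $\omega^*=\liminv U_\alpha$ and write $u_\alpha\colon\omega^*\to U_\alpha$ for the projections. Setting $\phi_\alpha=u_\alpha\circ\eta$ turns $\vec\phi=(\phi_\alpha)_{\alpha<\omega_1}$ into an $\omega_1$-inverse sequence in $\mathfrak{C}_K$ whose $\mathfrak{L}_K$-limit is $\eta$, so by Theorem~\ref{fra-gen} it suffices to show that $\vec\phi$ satisfies conditions (U) and (A) of a \fra sequence.

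For (U), given a $\mathfrak{C}_K$-object $g\colon K\to Y$, I produce a continuous surjection $\tilde g\colon\omega^*\to Y$ with $\tilde g\circ\eta=g$; (L3) then factors $\tilde g$ through some $U_\alpha$, giving the required $\mathfrak{C}_K$-arrow. I represent $Y=\liminv Y_n$ as an inverse limit of finite discrete spaces and construct $\tilde g=\liminv\tilde g_n$ as a limit of compatible continuous surjections $\tilde g_n\colon\omega^*\to Y_n$. At each stage, $g$ induces a clopen partition of $\eta[K]$ indexed by $Y_n$; $0$-dimensionality of $\omega^*$ lets me extend this to a clopen partition of $\omega^*$ that refines the partition coming from $\tilde g_{n-1}$, and the nowhere-denseness of $\eta[K]$ is used to place non-empty clopens in $\omega^*\setminus\eta[K]$ for any fibres of $Y_n$ not already met, securing surjectivity.

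Condition (A) is the crux and is where the $P$-set hypothesis enters. It reduces, via (L3), to the following lifting problem: given a continuous surjection $\psi\colon Y\to U_\alpha$ between compact metric $0$-dimensional spaces and a continuous $h\colon K\to Y$ with $\psi\circ h=\phi_\alpha$, find a continuous surjection $\tilde h\colon\omega^*\to Y$ with $\psi\circ\tilde h=u_\alpha$ and $\tilde h\circ\eta=h$. Theorem~\ref{project} furnishes an initial lift $\tilde h_0\colon\omega^*\to Y$ of $u_\alpha$ through $\psi$, but $\tilde h_0\circ\eta$ typically differs from $h$. Writing $Y=\liminv Y_n$, I build compatible continuous surjections $\tilde h_n\colon\omega^*\to Y_n$ inductively: on a clopen neighbourhood $V_n\supseteq\eta[K]$ chosen at level $n$, I replace the initial lift by one that matches $a_n\circ h$ on $\eta[K]$ while leaving it alone outside $V_n$; continuity of the resulting $\tilde h=\liminv\tilde h_n$ at points of $\eta[K]$ requires that $\bigcap_n V_n$ remain a neighbourhood of $\eta[K]$, and this is exactly what the $P$-set hypothesis delivers. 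Surjectivity of each $\tilde h_n$ is again maintained using the nowhere-denseness of $\eta[K]$.

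With (U) and (A) verified, Theorem~\ref{fra-gen} yields that $\eta=\liminv\vec\phi$ is $\mathfrak{C}_K$-generic. The main obstacle is (A): the level-by-level construction of a coherent lift that simultaneously matches the prescribed boundary data $h$ on $\eta[K]$, lifts $u_\alpha$ through $\psi$, and remains surjective. The $P$-set property is precisely the mechanism that makes the near-$\eta[K]$ modification of $\tilde h_0$ cohere across the countably many finite levels of the inverse-limit representation of $Y$.
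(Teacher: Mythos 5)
Your reduction is, in effect, circular: by (L3) every $\mathfrak{L}_K$-arrow out of $\eta$ into a metric object factors through some projection $u_\alpha$, so condition (A) for the sequence $\vec{\phi}$ with $\phi_\alpha=u_\alpha\circ\eta$ is \emph{equivalent} to condition (G2) for $\eta$ itself. Passing through Theorem~\ref{fra-gen} therefore does not reduce the problem; the entire content of the theorem sits in the lifting problem you isolate in your third paragraph, and that is exactly where your argument has a gap. (Note also that your route needs CH to write $\omega^*$ as an $\omega_1$-limit of metric quotients, whereas the statement, and the paper's direct verification of (G2), assume no such hypothesis.)

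The gap is the claim that ``surjectivity of each $\tilde h_n$ is maintained using nowhere-denseness.'' Surjectivity of the limit does follow by compactness from surjectivity at each level, but the clopen partition chosen at level $n$ must anticipate the constraint $f\circ h=g$ at \emph{all later} levels: the clopen region assigned to $y\in Y_n$ must, for every $t\in Y_{n+1}$ above $y$, meet the fiber $g^{-1}(g_{n+1}^{-1}(f_{n+1}(t)))$ in a nonempty clopen set disjoint from $\eta[K]$ and from the regions reserved for the other preimages of $y$. A greedy level-by-level choice can get stuck (the region for $y$ may lie entirely inside a single fiber of $g_{n+1}\circ g$ while $y$ has preimages $t$ mapping to other fibers). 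The paper's proof avoids this by reserving in advance, for each $y$, a countable family $\Bee_y$ of clopen sets placed inside the interiors of the fibers $g^{-1}(x)$ for $x$ ranging over the image under $f$ of a countable dense subset $A_y$ of the fiber of $Y\to Y_n$ over $y$, and then invoking the fact that $\omega^*$ is an \emph{F-space}, together with the P-set hypothesis, to conclude that the closures $\cl\bigcup\Bee_y$ are pairwise disjoint and disjoint from $\eta[K]$, so that they can be absorbed into the clopen partition at the next level. Your sketch never mentions the F-space property, and without it (or some substitute) the separation of these reserved open $F_\sigma$ sets --- and hence the inductive step --- fails; the P-set property alone, which you invoke only to keep $\bigcap_n V_n$ a neighbourhood of $\eta[K]$, is not sufficient.
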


\begin{pf}
	It  suffices to show that $\eta$ satisfies condition (G2), i.e. given 0-dimensional compact metrizable spaces $X,Y$ and a surjection $f\colon X\to Y,$
	and a continuous map $b\colon K\to Y,$ and a continuous surjection $g\colon \omega^*\to X$ such that $g\circ\eta=f\circ b$, there exists a continuous surjection $h\colon \omega^*\to Y$ such that
	\[f\circ h=g \text{ and } h\circ \eta=b.\] 
	\amor
	$$\xymatrix@R=2.5pc@C=2.9pc{
		K\ar[dr] \ar[d]_{b}\ar@{^(->}[r]^{\eta}&\omega^*\ar@{..>>}[dl]_(.34){h}\ar@{->>}[d]^{g}\\
		Y\ar@{->>}[r]_{f}&X}$$
	Since $X, Y$ are 0-dimensional compact metrizable spaces we can represent the spaces $X,Y$ as an inverse limits  $X=\liminv\{X_n,g_n^{n+1},n\in\omega\}$ and $Y=\liminv\{Y_{n},b_n^{n+1},n\in\omega\}$, respectively, and  such that $X_n, Y_{n}$ 
	are finite discrete spaces and the following diagram commutes
	
	\amor
	$$\xymatrix@R=2.5pc@C=2.9pc{
		Y\ar@{->>}[r]^{f}\ar@{->>}[d]_{b_{n+1}}&X\ar@{->>}[d]^{g_{n+1}}\\
		Y_{{n+1}} \ar@{->>}[d]_{b^{n+1}_n}\ar@{->>}[r]^{f_{n+1}}&X_{n+1}\ar@{->>}[d]^{g^{n+1}_n}\\
		Y_{n}\ar@{->>}[r]_{f_n}&X_n}$$
	for each $n\in \omega.$	
	It's sufficient to construct for each $n\in\omega$ a continuous surjection $h_n\colon \omega^*\to Y_{n}$ such that
	$$g_n\circ g=f_n\circ h_n\mbox{ and }h_n\circ\eta=b_n\circ b$$
	\amor
	$$\xymatrix@R=2.5pc@C=2.9pc{
		K \ar[d]_{b}\ar@{^(->}[r]^{\eta}&\omega^*\ar@{..>>}[ddl]_(.24){h_n}\ar@{->>}[d]^{g}\\
		Y\ar@{->>}[d]_{b_n}\ar@{->>}[r]^(.3){f}&X\ar@{->>}[d]^{g_n}\\
		Y_{n}\ar@{->>}[r]_{f_n}&X_n }$$
Then let $h$ be the map induced by $\{h_n:n\in\omega\}.$
	
	\textit{The construction of the map $h_{n}$.}
	
	Since the above diagrams are commutative for each $y\in Y_n$ and $n\in\omega$ we have 
	\[(*)\;f(b_n^{-1}(y))\subs g_n^{-1}(f_n(y))\mbox{ and }\eta(b^{-1}(b_n^{-1}(y)))\subs g^{-1}(g_n^{-1}(f_n(y))).\]
	For each $y\in Y_n$ and $n\in\omega$ we choose a countable  subset $A^n_y\subs (b_n)^{-1}(y)$ such that 
	$\cl A^n_y=(b_n)^{-1}(y)$ and we put
	\[A_y=\bigcup\{A^k_z:z\in Y_k, \;b^k_n(z)=y,\; n\leq k<\omega\}\subs Y.\]
	Therefore we get the following:	
	
	\begin{enumerate}[(a)]
		\item if $s, t\in Y_n$ and $s\ne t$ then $A_s\cap A_t=\emptyset$,
		\item  if $s\in Y_n$ and $t\in Y_k$ and $n<k$ and $b^k_n(t)=s$ then $A_t\subs A_s$,
		\item $\bigcup\{f[A_y]:y\in f_n^{-1}(x)\}$ is a dense subset of $g_n^{-1}(x)$ for $x\in X_n$.
	\end{enumerate}

	Inductively we will define a collection $\{\Bee_y:y\in\bigcup\{Y_n:n\in\omega\}\}$ of countable families consisting of clopen sets of $\omega^*$ disjoint from $\eta[K]$  such that
	\begin{enumerate}[(i)]
		\item if $s, t\in Y_n$ and $s\ne t$ then $\bigcup \Bee_s\cap\bigcup\Bee_t=\emptyset$,
		\item if $s\in Y_n$ and $t\in Y_k$ and $n<k$ and $b^k_n(t)=s$ then $\bigcup\Bee_t\subs \bigcup\Bee_s$,
		\item for each $x\in f[A_y]$ there is $V\in\Bee_y$ such that $V\subs\int g^{-1}(x)$ and  each $V\in\Bee_y$
		is included in the set $\int g^{-1}(x)$ for some $x\in f[A_y].$
	\end{enumerate}
	
	Assume that we have just defined $\{\Bee_y:y\in\bigcup\{Y_k:k\leq n\}\}$ with properties $(a)-(c)$. 
	Since the family $\{(b^{n+1}_n)^{-1}(y)\cap f^{-1}_{n+1}(x):x\in X_{n+1}, y\in Y_n\}$ consists of pairwise disjoint sets and $Y_{n+1}=\bigcup\{(b^{n+1}_n)^{-1}(y)\cap f^{-1}_{n+1}(x):x\in X_{n+1}, y\in Y_n\}$ given $x\in X_{n+1}$ and $y\in Y_n$ we define a family $\Bee_t$ for $t\in (b^{n+1}_n)^{-1}(y)\cap f^{-1}_{n+1}(x).$
	Let $(b^{n+1}_n)^{-1}(y)\cap f^{-1}_{n+1}(x)=\{y_0,\ldots ,y_k\}$ and 
	$a\in f[A_{y_0}]\cup\ldots\cup f[A_{y_k}]\subs g^{-1}_{n+1}(x)\cap f[A_y].$ There is the unique $V\in \Bee_y$ such that
	$V\subs \int g^{-1}(a)$. Let $\{i_0,\ldots ,i_p\}=\{j:a\in f[A_{y_j}], 0\leq j\leq k\}.$ 
	Then we divide the set $V$ onto finite clopen sets
	$V=V_{i_0}\cup\ldots\cup V_{i_p}$ and we put 
	$$\Bee_{y_j}=\{V_j:V\in\Bee_y, a\in  f[A_{y_j}], V\subs \int g^{-1}(a)\}$$ 
	for $0\leq j\leq k.$ 
	
	We shall construct a partition $$\Pee_n=\{W_y:y\in Y_n,\eta(b^{-1}(b_n^{-1}(y)))\ne\emptyset \}\cup\{V_y:y\in Y_n\}$$  of $\omega^*$ consisting of clopen sets such that
	\begin{enumerate}
		\item $\eta(b^{-1}(b_n^{-1}(y)))\subs W_y$ whenever $\eta(b^{-1}(b_n^{-1}(y)))\ne\emptyset$ and $y\in Y_n,$
		\item $\bigcup\Bee_y\subs V_y$ for $y\in Y_n,$
		\item if $y\in Y_n$ then 
		\begin{itemize}
			\item $V_y\subs g^{-1}((g_n)^{-1}(f_n(y)))$ whenever $\eta(b^{-1}(b_n^{-1}(y)))=\emptyset$,
			\item $W_y\cup V_y\subs g^{-1}((g_n)^{-1}(f_n(y)))$ whenever $\eta(b^{-1}(b_n^{-1}(y)))\ne\emptyset$,
		\end{itemize}
		
		\item  if $s\in Y_n$ and $t\in Y_k$ and $n<k$ and $b^k_n(t)=s$ then 
		\begin{itemize}
			\item $V_t\subs V_s$,
			\item $W_t\subs W_s$ whenever $\eta(b^{-1}(b_k^{-1}(t)))\ne\emptyset$.
		\end{itemize}
	\end{enumerate}
	Assume that we have just defined a partition $\Pee_n$ with properties $(1)-(4)$.  Since $\omega^*$ is an F-space and for 
	each $y\in Y_n$ the set $\bigcup\Bee_y$ is an $F_\sigma$ set disjoint from the P-set $\eta[K]$, the family $\{\cl \bigcup\Bee_y:y\in Y_{n+1}\}$ consists of pairwise disjoint closed sets and disjoint from the set $\eta[K]$. By $(iii)$ and $(*)$ we get 
	$$\cl \bigcup\Bee_y\subs\cl g^{-1}(f(A_y))\subs \cl g^{-1}(g_{n+1}^{-1}(f_{n+1}(y)))=g^{-1}(g_{n+1}^{-1}(f_{n+1}(y)))$$
	for all $y\in Y_{n+1}$. By $(ii)$ and $(1),(2)$ and $(*)$ the family 
	$$\Raa_{n+1}=\{\cl \bigcup\Bee_y:y\in Y_{n+1}\}\cup\{\eta(b^{-1}(b_{n+1}^{-1}(y))):y\in Y_{n+1}\}$$ consists of pairwise disjoint closed sets such that each element 
	of $\Raa_{n+1}$ is included in some element of the partition $\Pee_n$ and in some element of the partition  $\{g^{-1}(g_{n+1}^{-1}(x)):x\in X_{n+1}\}.$ Therefore we find a partition 
	$$\Pee_{n+1}=\{W_y:y\in Y_n,\eta(b^{-1}(b_n^{-1}(y)))\ne\emptyset \}\cup\{V_y:y\in Y_n\}$$  of $\omega^*$ consisting of clopen sets with properties $(1)-(4)$.
	
	Define the map $h_n\colon \omega^*\to Y_n$ in the following way:
	\[h_n(x)=y \mbox{ whenever } x\in W_y\cup V_y. \]
	
	If $x\in K$ there exists a unique $y\in Y_n$ such that $x\in b^{-1}(b_n^{-1}(y))$. 
	Therefore $\eta(x)\in\eta(b^{-1}(b_n^{-1}(y)))$ and by $(1)$ we get $h_n(\eta(x))=y$ and $b(b_n(x))=y.$
	Since $\Pee_n$ is a partition of $\omega^*$, for each $x\in\omega^*$ there exists a unique $y\in Y_n$ such that $x\in W_y\cup V_y$. 
	Hence $h_n(x)=y$. By $(3)$ we get $g(g_n(x))=f_n(y)$. Therefore $g(g_n(x))=f_n(h_n(x)).$ According to $(2)$ the map $h_n$ is a surjection.
\end{pf}

\begin{corollary}[\cite{Mill-Douw}]  Assuming CH, every homeomorphism of nowhere dense closed $P$-sets in  $\omega^*$ 
	can be extended to an auto-homeomorphism of  $\omega^*$.
\end{corollary}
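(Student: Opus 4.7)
The plan is to recognize the desired extension as an $\mathfrak{L}_{K_1}$-isomorphism between two $\mathfrak{C}_{K_1}$-generic objects, and to invoke the uniqueness of the generic object coming from the \fra framework.

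Let $K_1,K_2\subs\omega^*$ be nowhere dense closed P-sets and let $h\colon K_1\to K_2$ be a homeomorphism; write $\eta_i\colon K_i\hookrightarrow\omega^*$ for the inclusions ($i=1,2$). Since $K_1$ is closed in $\omega^*$, it is compact and $0$-dimensional, so Theorem~\ref{generic} applies and $\eta_1$ is $\mathfrak{C}_{K_1}$-generic. The composite $\eta_2\circ h\colon K_1\to\omega^*$ is likewise an embedding of $K_1$ into $\omega^*$ whose image $K_2$ is by hypothesis a nowhere dense closed P-set, so a second application of Theorem~\ref{generic} shows that $\eta_2\circ h$ is $\mathfrak{C}_{K_1}$-generic as well.

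Next I would argue that any two $\mathfrak{C}_{K_1}$-generic objects of $\mathfrak{L}_{K_1}$ are $\mathfrak{L}_{K_1}$-isomorphic. By Theorem~\ref{gen-fra}, each of $\eta_1$ and $\eta_2\circ h$ is the limit of a continuous \fra $\omega_1$-sequence in $\mathfrak{C}_{K_1}$ (such sequences exist under CH by Theorem~\ref{th:5}). Using the amalgamation property of $\mathfrak{C}_{K_1}$ (Lemma~\ref{l:1}) together with Theorem~\ref{uni} applied alternately in both directions, and refining with condition (A) at each stage, a standard \fra back-and-forth yields an isomorphism between the two sequences. Passing to the limit in $\mathfrak{L}_{K_1}$ produces an $\mathfrak{L}_{K_1}$-isomorphism between $\eta_1$ and $\eta_2\circ h$.

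By the very definition of morphisms in $\mathfrak{L}_{K_1}$, such an isomorphism is an auto-homeomorphism $H\colon\omega^*\to\omega^*$ satisfying $H\circ\eta_1=\eta_2\circ h$, which on $K_1$ reads as $H\rest K_1=h$; hence $H$ is the sought extension. The main technical point is the back-and-forth construction of the isomorphism between the two \fra sequences: one must carefully interleave Theorem~\ref{uni} and condition (A) in the two directions so that the matched bonding arrows commute at every coordinate. Once that is done, the rest of the argument is a direct unpacking of the categorical definitions.
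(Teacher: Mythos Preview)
Your proposal is correct and follows essentially the same route as the paper: show via Theorem~\ref{generic} that both $\eta_1$ and $\eta_2\circ h$ are $\mathfrak{C}_{K_1}$-generic, and then invoke uniqueness of the generic object to obtain the $\mathfrak{L}_{K_1}$-isomorphism $H$. The only difference is cosmetic: the paper cites \cite[Theorem~4.6]{KubFra} directly for uniqueness, whereas you sketch the back-and-forth yourself (your parenthetical appeal to Theorem~\ref{th:5} is slightly misplaced---what you actually need is (L2) to present each generic as a limit and then Theorem~\ref{gen-fra} to see that the presenting sequence is \fra---but this does not affect the argument).
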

\begin{pf}
Let $h\colon K\to L$ be a homeomorphism between nowhere dense closed $P$-sets in  $\omega^*$. Denote by $\eta\colon K\hookrightarrow \omega^*$ 
and $\eta_1\colon L\hookrightarrow \omega^*$ the inclusion mappings. By Theorem \ref{generic}, both $\eta$ and $\eta_1\circ h$ are $\mathfrak{C}_K$-generic, therefore by uniqueness (\cite[Theorem 4.6]{KubFra}) there
exists an $\mathfrak{L}_K$-isomorphism $H \colon  \omega^*\to\omega^*$ from $\eta$ to $\eta_1\circ h$. This means that there exists an auto-homeomorphism of $\omega^*$ extending $h$.
\end{pf}
\begin{corollary}[\cite{Mill-Douw}]  Assuming CH, every  nowhere dense closed $P$-set in  $\omega^*$   is a retract of 
	$\omega^*$.
\end{corollary}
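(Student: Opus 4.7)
The plan is to mirror the proof of the preceding corollary but invoke Theorem~\ref{fra-sequ}(2) to extract the retraction. Let $K$ be a nowhere dense closed $P$-set in $\omega^*$ and let $\eta\colon K\hookrightarrow\omega^*$ denote the inclusion. Since $K$ is a closed subspace of $\omega^*$, it is a compact $0$-dimensional space of weight at most $\mathfrak{c}=\omega_1$. By Theorem~\ref{generic}, $\eta$ is a $\mathfrak{C}_K$-generic object.

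Next, I would construct a second $\mathfrak{C}_K$-generic object that carries an explicit left inverse built in. By Theorem~\ref{th:5} there is a continuous $\omega_1$-\fra sequence $\vec{\phi}=(\phi_\alpha\colon K\to U_\alpha)_{\alpha<\omega_1}$ in $\mathfrak{C}_K$; let $\phi_{\omega_1}\colon K\to U_{\omega_1}$ be its limit map. Applying Theorem~\ref{fra-gen} to the pair $\mathfrak{C}_K\subs\mathfrak{L}_K$, the object $\phi_{\omega_1}$ is $\mathfrak{C}_K$-generic. Since $\w(K)\le\omega_1$, Theorem~\ref{fra-sequ}(2) produces a continuous map $r\colon U_{\omega_1}\to K$ with $r\circ\phi_{\omega_1}=\ide_K$.

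Finally, I would transport $r$ along an isomorphism supplied by uniqueness of generic objects. By \cite[Theorem~4.6]{KubFra} there is an $\mathfrak{L}_K$-isomorphism from $\eta$ to $\phi_{\omega_1}$, i.e., a homeomorphism $H\colon\omega^*\to U_{\omega_1}$ with $H\circ\eta=\phi_{\omega_1}$. Then $\rho:=r\circ H\colon\omega^*\to K$ satisfies
$$\rho\circ\eta \;=\; r\circ H\circ\eta \;=\; r\circ\phi_{\omega_1}\;=\;\ide_K,$$
so $\rho$ restricts to the identity on $K\subs\omega^*$ and is the desired retraction.

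I do not anticipate any genuine obstacle: all the difficult work is already packaged in Theorem~\ref{generic} (showing $\eta$ is generic), Theorem~\ref{fra-sequ}(2) (producing the left inverse $r$), and the uniqueness of the generic object. CH enters only through Theorem~\ref{th:5}, which both produces the $\omega_1$-\fra sequence and implicitly identifies $U_{\omega_1}$ with $\omega^*$ (as in the previous two corollaries).
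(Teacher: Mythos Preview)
Your proof is correct, but it takes a slightly more roundabout route than the paper's. The paper represents $\eta$ itself as the limit of an $\omega_1$-sequence $\vec u$ in $\mathfrak{C}_K$ (using condition~(L2) for the pair $\mathfrak{C}_K\subs\mathfrak{L}_K$), then invokes Theorem~\ref{generic} to see that $\eta$ is $\mathfrak{C}_K$-generic and Theorem~\ref{gen-fra} to conclude that $\vec u$ is already a \fra sequence; Theorem~\ref{fra-sequ}(2) then produces the retraction $r\colon\omega^*\to K$ directly, with no transport step needed. Your approach instead manufactures a second generic object via Theorem~\ref{th:5} and Theorem~\ref{fra-gen}, obtains the left inverse there, and pushes it back along the uniqueness isomorphism from~\cite[Theorem~4.6]{KubFra}. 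Both arguments rest on the same core ingredients (Theorem~\ref{generic} and Theorem~\ref{fra-sequ}(2)); the paper's version is marginally shorter since it avoids building the auxiliary object and invoking uniqueness, while yours avoids the (mild) verification that an arbitrary $\mathfrak{C}_K$-presentation of $\eta$ satisfies the hypotheses of Theorem~\ref{fra-sequ}.
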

\begin{pf}
	Let $\eta\colon K\hookrightarrow \omega^*$ denote the inclusion mapping of a nowhere dense closed $P$-set $K\subs \omega^*.$ 
	We can represent $\eta\colon  K\hookrightarrow \omega^*$ as the limit of a sequence $\vec{u}=\{\phi_\alpha:\alpha<\omega_1\}$ in the category $\mathfrak{C}_K$, where $\phi_\alpha\colon K\to U_\alpha.$ By Theorem \ref{generic} $\eta\colon  K\hookrightarrow \omega^*$ is $\mathfrak{C}_K$-generic, therefore by Theorem \ref{gen-fra} $\vec{u}$ is \fra sequence.  According to Theorem \ref{fra-sequ}(2) there is a	retraction $r\colon \omega^*\to K$. 
\end{pf}

\section{Categories of $\kappa$-ultrametric  spaces}\label{sekcja:3}

Let $\gamma$ be an ordinal.  A $\gamma$-\textit{ultrametric} (also called an “inverse $\gamma$-metric”, see e.g.~Nyikos \cite{Nyi}) on a set $X$
 is a function  $u\colon X \times X\to \gamma+1$  such that for all
$x, y, z \in X$:
\begin{itemize}
	\item [(U1)] $u(x, y) =\gamma$ if and only if $x = y,$
	\item [(U2)] $u(y,z)\geq\min\{u(y,x),u(x,z)\}$ (ultrametric triangle law),
	\item [(U3)] $u(x, y) = u(y, x)$ (symmetry).
\end{itemize}
 A \textit{closed  ball of  radius $\alpha$ and center $x$}  is a set of the form 
$$B_\alpha(x) =\{y\in X:u(x,y)\geq\alpha\},$$ 
where $x\in X$ and $\alpha\in \gamma$. 
Each $\gamma$-ultrametric induces a topology whose base is the family of all closed balls. We will call a space with this topology  {\em $\gamma$-ultrametric} (or {\em $\gamma$-metrizable}).
It is rather obvious that two balls in an  $\gamma$-ultrametric are either disjoint or one is contained in the other.
 Moreover, if $B_\alpha(a)\subsetneq B_\beta(b)$ then $\alpha>\beta.$ 

If $X$ is a discrete space,  then for any ordinal $\gamma\geq\omega$ there is a natural   

 	$\gamma$-ultrametric 	$d\colon  X\times X\to \gamma+1$ on $X$, namely $d(a,b)=\gamma$ iff $a=b$ and $d(a,b)=0$ iff $a\ne b$.  Since  $B_\gamma(x)=\{x\}$ the set $\{x\}$ is open for any $x\in X$. So in this case  $\gamma$-ultrametric topology is   discrete.

Let $\kappa,\lambda$ be  infinite cardinals. 
We  define a $\lambda$-ultrametric   $u\colon \kappa^\lambda\times \kappa^\lambda\to\lambda+1$ by the formula: $$u(a,b)=\sup\{\alpha < \lambda \colon a\restriction \alpha=b\restriction \alpha\}$$ for $a,b\in\kappa^\lambda$. 
 
	We say that a $\lambda$-ultrametric space $(X,d)$ of weight not greater 
	than $\kappa^{<\lambda}$  is $(\lambda,\kappa)$-\textit{bounded} if there is a non-decreasing sequence of ordinals numbers
	$\{\gamma_\alpha:\alpha<\lambda\}\subset\lambda$ such that $|\{B_\alpha(a)\colon a\in X\}|\leq |\kappa^{\gamma_\alpha}|$ 
	for every $\alpha<\lambda.$
	
	A $\lambda$-ultrametric space $X$ is \textit{spherically complete} if every nonempty chain of closed balls has nonempty intersection.

A space $\kappa^\omega$ with ultrametric defined as above is $(\omega,\kappa)$-bounded.

\begin{theorem}\label{th:4.1}
Assume that  $\kappa,\lambda$ are regular cardinals such that $\lambda\leq\kappa$.	
A topological space $X$ is
$(\lambda,\kappa)$-bounded and spherically complete
if and only if 
there exists a  non-decreasing sequence of ordinals  $\{\gamma_\alpha:\alpha<\lambda\}\subset\lambda$ 
 and exists an inverse sequence  $\{X_\alpha, q_\alpha^\beta,\alpha\le \beta<\lambda\}$ such that
\begin{itemize}
\item $X=\liminv \{X_\alpha, q_\alpha^\beta,\alpha\le \beta<\lambda\}$,
\item $X_\alpha$ is a discrete space of cardinality not greater than  $\vert\kappa^{\gamma_\alpha}\vert$ for each $\alpha<\lambda$,
\item $q_\alpha^\beta\colon X_\beta\to X_\alpha$ is surjection for all $\alpha\le\beta<\lambda$.
\end{itemize}
 \end{theorem}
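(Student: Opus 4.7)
The plan is to prove the equivalence by explicitly passing between the ultrametric description and an inverse-system realization.

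$(\Rightarrow)$ Assume $X$ is spherically complete and $(\lambda,\kappa)$-bounded with witness $\{\gamma_\alpha\}$. Set
\[
X_\alpha=\{B_\alpha(x):x\in X\}
\]
with the discrete topology; by hypothesis $|X_\alpha|\leq|\kappa^{\gamma_\alpha}|$. Because any two closed balls in a $\lambda$-ultrametric are either disjoint or nested, the assignment $B_\beta(x)\mapsto B_\alpha(x)$ is a well-defined surjection $q_\alpha^\beta\colon X_\beta\to X_\alpha$ for $\alpha\leq\beta<\lambda$, and composition in the indexing chain is automatic. Define $\Phi\colon X\to\liminv X_\alpha$ by $\Phi(x)=(B_\alpha(x))_{\alpha<\lambda}$. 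Injectivity of $\Phi$ follows from $\bigcap_{\alpha<\lambda}B_\alpha(x)=\{x\}$ (for $x\ne y$ take any $\alpha>u(x,y)$). Surjectivity is where spherical completeness is used: a compatible thread $(B_\alpha(x_\alpha))_\alpha$ is a descending chain of closed balls whose intersection is nonempty, and any point there maps to the thread. Finally, $\Phi$ identifies the basic clopen $B_\alpha(x)\subs X$ with $\Phi^{-1}(\{B_\alpha(x)\})$, the standard basic clopen of $\liminv X_\alpha$, so $\Phi$ is a homeomorphism.

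$(\Leftarrow)$ Given an inverse sequence as described with limit projections $q_\alpha\colon X\to X_\alpha$, define $u(x,x)=\lambda$ and, for $x\ne y$,
\[
u(x,y)=\sup\{\alpha<\lambda : q_\alpha(x)=q_\alpha(y)\}.
\]
Axioms U1 and U3 are immediate, and U2 holds because the set of coordinates on which two threads agree is downward closed. One checks directly that $B_\alpha(x)=\bigcap_{\beta<\alpha}q_\beta^{-1}(q_\beta(x))$, so the ultrametric and inverse-limit topologies coincide on $X$. For spherical completeness, a descending chain $B_{\alpha_i}(x_i)$ of balls determines a compatible initial segment of a thread up to $\sup_i\alpha_i$, and surjectivity of the bonding maps lets one extend it to an element of $X$ lying in $\bigcap_i B_{\alpha_i}(x_i)$. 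For $(\lambda,\kappa)$-boundedness, note that for a successor $\alpha=\beta+1$ the ball $B_\alpha(x)$ is a fibre of $q_\beta$, giving $|\{B_\alpha(x):x\in X\}|\leq|X_\beta|\leq|\kappa^{\gamma_\beta}|$; at limit $\alpha$ the set $\{B_\alpha(x):x\in X\}$ injects into $\prod_{\beta<\alpha}X_\beta$, and replacing $\gamma_\alpha$ by $\max(\gamma_\alpha,|\alpha|)$ (still below $\lambda$ by regularity) absorbs this estimate into the required cardinality bound.

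The main obstacle I expect is handling limit stages in the backward direction: extending a compatible initial segment of a thread across a limit ordinal up to an element of $X$ relies on the inverse sequence behaving coherently at limits. I would handle this by first observing that one can without loss of generality take the sequence to be continuous at limits (by replacing $X_\alpha$ at a limit by the image of $X$ in $\liminv_{\beta<\alpha}X_\beta$, which does not increase the cardinality beyond the product bound just described), after which every descending chain of balls automatically gives rise to a point of $X$. The remaining steps are routine verification of axioms and topological identifications.
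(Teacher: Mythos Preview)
Your proposal is correct and follows essentially the same route as the paper: in both directions you and the paper take the discrete factors to be the families of $\alpha$-balls, use the map $x\mapsto(B_\alpha(x))_{\alpha<\lambda}$ for the forward homeomorphism, and recover the ultrametric in the backward direction via $u(x,y)=\sup\{\alpha:q_\alpha(x)=q_\alpha(y)\}$. Your handling of limit stages (and the suggestion to pass to a continuous inverse sequence) is in fact more explicit than the paper's, which records only $B_{\alpha+1}(x)=q_\alpha^{-1}(x_\alpha)$ and leaves the limit case implicit.
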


\begin{proof}
Assume that $X$ is $(\lambda,\kappa)$-bounded and spherically complete. So there is a non-decreasing sequence of ordinals  
	$\{\gamma_\alpha:\alpha<\lambda\}\subset\lambda$   such that 
	$|\{B_\alpha(a)\colon a\in X\}|\leq |\kappa^{\gamma_\alpha}|$	for every $\alpha<\lambda$.  
	Let us consider the family $X_\alpha=\{B_\alpha(a):a\in X\}$ as a discrete space for every $\alpha<\lambda$. For $\alpha\le\beta$ 
	 the formula $q_\alpha^\beta(B_\beta(a))=B_\alpha(a)$ defines surjection  $q_\alpha^\beta\colon X_\beta\to X_\alpha$ . The map  $q_\alpha^\beta$ is well defined because in any ultrametric space any two balls are either disjoint or one is contained in the other. It easy to see that $X=\liminv \{X_\alpha, q_\alpha^\beta,\alpha\le\beta<\lambda\}.$

Let $h\colon X\to\varprojlim\{X_\alpha,q_\alpha^\beta,\alpha\le \beta<\lambda\}$ be a map defined by the formula $h(x)=(B_\alpha(x)\colon \alpha<\lambda)$. Observe that $h$ is injective. Indeed, for distinct $x,y\in X$ we have $d(x,y)=\alpha<\lambda$. So $B_{\alpha+1}(x)\ne B_{\alpha+1}(y)$. 
For the surjectivity  of $h$  fix  $(B_\alpha(x_\alpha)\colon \alpha<\lambda)\in\varprojlim
\{X_\alpha,q_\alpha^\beta,\alpha\le \beta<\lambda\}$. Then $$B_\beta(x_\beta)\subseteq   B_\alpha(x_\beta) =q_\alpha^\beta(B_\beta(x_\beta))=B_\alpha(x_\alpha)$$ for any $\alpha\le \beta<\lambda$ which means that $\mathcal{N}=\{B_\alpha(x_\alpha)\colon \alpha<\lambda\}$ is a chain. Since $X$ is spherically complete then there is $x\in\bigcap\mathcal{N}$.  Therefore $x\in B_\alpha(x_\alpha)$ which means that $B_\alpha(x)=B_\alpha(x_\alpha)$ for each $\alpha<\lambda$ and consequently $(B_\alpha(x_\alpha)\colon \alpha<\lambda)=h(x)$. To prove the continuity of $h$, it is sufficient to note that

$$h^{-1}(q_\alpha^{-1}(B_\alpha(x)))=B_\alpha(x)$$
for each $\alpha<\lambda$. The last equality also ensures that $h$ is an open map, because $h$ is surjective and closed balls form the basis of the space $X$.

	Assume that  $X=\liminv \{X_\alpha, q_\alpha^\beta,\alpha\le \beta<\lambda\},$ where $X_\alpha$ is a discrete space of cardinality not greater than  $|\kappa^{\gamma_\alpha}|$  and $\{\gamma_\alpha:\alpha<\lambda\}\subset\lambda$ is a non-decreasing. Define a $\lambda$-ultrametric   $u\colon X^2\to\lambda+1$ by the formula: $$u(a,b)=\sup\{\alpha < \lambda \colon a\restriction \alpha=b\restriction \alpha\}$$ for $a,b\in X$. 
		Since 
		\begin{equation}
		B_{\alpha+1}(x)=q_\alpha^{-1}(x_\alpha)\tag{$*$}
		\end{equation}
		 for all $x\in X$ and $\alpha<\lambda$
the topology induced by closed balls on $X$ is compatible with topology of inverse limit on $X$.	
	Therefore $X$ is  $(\lambda,\kappa)$-bounded. 
	
	We shall prove that $X$ is spherically complete. Let $\mathcal{N}=\{B_s(a_s)\colon s\in S\}$ be a chain and  $S\subset\lambda$ and $a_s\in X_s.$  Since $B_\alpha(a)\subsetneq B_\beta(b)$ implies $\alpha>\beta$ we can assume that $N=\{B_\alpha(a_\alpha)\colon \alpha<\lambda\}.$ Hence $\bigcap N=\{(a_\alpha)_{\alpha<\lambda}\}.$
	\end{proof}

\begin{theorem}[{\cite[Theorem~2.10]{Nyi}}]\label{ultra}
	Let  $\kappa,\lambda$ be  regular cardinals such that $\lambda\leq \kappa$.  
	
	Then a $\lambda$-ultrametric space $X$  of weight $\kappa$ is homeomorphic to a subspace of the    $\lambda$-ultrametric space $\kappa^\lambda$.
\end{theorem}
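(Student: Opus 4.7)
The plan is to build a homeomorphic embedding $\sigma\colon X\to\kappa^\lambda$ by coherently labelling the canonical partitions of $X$ into balls.

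First I observe that for each $\alpha<\lambda$ the family $\Pee_\alpha=\{B_\alpha(x):x\in X\}$ is a partition of $X$ into clopen sets, since in any ultrametric two balls of the same radius are either equal or disjoint. Because $\Pee_\alpha$ is a pairwise disjoint family of nonempty open sets and $\w(X)\le\kappa$, we get $|\Pee_\alpha|\le\kappa$; moreover, for $\alpha\le\beta<\lambda$ the family $\Pee_\beta$ refines $\Pee_\alpha$ via the inclusion $B_\beta(x)\subs B_\alpha(x)$.

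Next, by transfinite recursion on $\alpha<\lambda$ I build injections $\iota_\alpha\colon\Pee_\alpha\to\kappa^\alpha$ that are \emph{coherent}, meaning $\iota_\beta(B_\beta(x))\rest\alpha=\iota_\alpha(B_\alpha(x))$ whenever $\alpha\le\beta<\lambda$. At $\alpha=0$ the partition $\Pee_0=\{X\}$ is mapped to the unique element of $\kappa^0$. At a successor $\alpha+1$, for each $B\in\Pee_\alpha$ the refining balls contained in $B$ form a pairwise disjoint family of nonempty open subsets of $X$ of cardinality at most $\kappa$, so I choose an injection $\tau_B\colon\{B'\in\Pee_{\alpha+1}:B'\subs B\}\to\kappa$ and define $\iota_{\alpha+1}(B')$ to be $\iota_\alpha(B)$ extended by the value $\tau_B(B')$ at coordinate $\alpha$. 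At a limit $\alpha<\lambda$ I set $\iota_\alpha(B_\alpha(x))=\bigcup_{\beta<\alpha}\iota_\beta(B_\beta(x))$, a well-defined element of $\kappa^\alpha$ by coherence of the $\iota_\beta$. Injectivity at limit stages follows because $B_\alpha(x)\ne B_\alpha(y)$ implies $u(x,y)<\alpha$, so some $\beta<\alpha$ witnesses $B_\beta(x)\ne B_\beta(y)$, and by induction $\iota_\beta(B_\beta(x))\ne\iota_\beta(B_\beta(y))$, making the unions differ.

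Finally, I define $\sigma\colon X\to\kappa^\lambda$ by $\sigma(x)=\bigcup_{\alpha<\lambda}\iota_\alpha(B_\alpha(x))$. The identity $\sigma^{-1}(B_\alpha(\sigma(x)))=B_\alpha(x)$ then holds for every $\alpha<\lambda$, because $\sigma(y)\rest\alpha=\sigma(x)\rest\alpha$ unwinds to $\iota_\alpha(B_\alpha(y))=\iota_\alpha(B_\alpha(x))$, which by injectivity of $\iota_\alpha$ is equivalent to $B_\alpha(y)=B_\alpha(x)$, i.e., $y\in B_\alpha(x)$. From this identity one reads off both the injectivity of $\sigma$ (for $x\ne y$ take any $\alpha$ with $u(x,y)<\alpha<\lambda$) and the fact that $\sigma$ is a homeomorphism onto its image, since the closed balls form a base on both sides. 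The main obstacle is the bookkeeping of the transfinite recursion; the weight hypothesis is used precisely at each successor stage to ensure that the refinement of any single ball has cardinality at most $\kappa$.
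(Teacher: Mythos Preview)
Your argument is correct and follows the same core idea as the paper: map each point $x$ to the thread of balls $(B_\alpha(x))_{\alpha<\lambda}$ and verify the key identity $\sigma^{-1}(B_\alpha(\sigma(x)))=B_\alpha(x)$, which is exactly the paper's $h^{-1}(q_\alpha^{-1}(B_\alpha(a)))=B_\alpha(a)$. The only difference is that the paper stops at the embedding into $\liminv\{X_\alpha,q_\alpha^\beta\}$ and leaves the final identification with a subspace of $\kappa^\lambda$ implicit, whereas you carry out this identification explicitly via the coherent injections $\iota_\alpha\colon\Pee_\alpha\to\kappa^\alpha$; this makes your version slightly more self-contained but not a genuinely different route.
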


\begin{pf}
Let $u\colon X\times X\to \lambda+1$ be a $\lambda$-ultrametric on the set $X.$	Define a discrete space $X_\alpha=\{B_\alpha(x)\colon x\in X\}$ and a surjection $q_\alpha^\beta\colon X_\beta\to X_\alpha$ in the following way $q_\alpha^\beta(B_\beta(x))=B_\alpha(x)$. The map  $q_\alpha^\beta$ is well defined because in an ultrametric space any two balls are either disjoint or one is contained in the other. Obviously $\vert X_\alpha\vert\le \kappa$ for each $\alpha<\lambda$. Let 
$$Y=\{(B_\alpha(x))_{\alpha<\lambda}\colon x\in X\}.$$
It is clear that
$Y\subs \liminv \{X_\alpha, q_\alpha^\beta,\alpha<\beta<\lambda\}$. 
 
The space $X$ is homeomorphic to 
	$Y.$ Indeed, define a map $h\colon X\to Y$ as follows 
	$h(a)=(B_\alpha(a))_{\alpha<\lambda}$. It's easy to see that $h$ is a bijection and $h^{-1}(q_\alpha^{-1}(B_\alpha(a)))=B_\alpha(a)$, hence $h$ is homeomorphism onto a subspace of $\kappa^\lambda$.
\end{pf}

Let $(K,u)$ and $(X,d)$ be  ultrametric spaces, where  $u\colon K\times K\to \kappa+1$ and  $d\colon X\times X\to \tau+1$ and $\kappa,\tau$ are infinite cardinals. 
We say that a function $f\colon K\to X$ is 
\textit{uniformly continuous} if for every $\epsilon\in \tau$ there is $\delta\in \kappa$ such that for all
	$a,b\in K$ if $u(a,b)\geq \delta$, then $d(f(a),f(b))\geq\epsilon$.

\textbf{In the rest of this section we assume that  $\kappa,\lambda$ are regular cardinals such that $\lambda\leq\kappa$.}

Fix a $\lambda$-ultrametric space $(K,u)$ of weight $\kappa.$ 

We define the categories $\mathfrak{M}_K$ and $\mathfrak{D}_K$ as follows. 
The objects of $\mathfrak{M}_K$ are uniformly continuous mappings $f\colon  K\to X$, where $(X,d)$ is a 
 
$(\lambda,\kappa)$-bounded and spherically complete.   Given two $\mathfrak{M}_K$-objects 
$f_0\colon  K \to X_0$, $f_1 \colon  K \to X_1$, an $\mathfrak{M}_K$-arrow from $f_1$ to $f_0$ is
a uniformly continuous surjection $q \colon  X_1\twoheadrightarrow X_0$ satisfying $q \circ f_1 = f_0$.
The composition in $\mathfrak{M}_K$ is the usual composition of mappings.  
We define $\mathfrak{D}_K$ to be the full
subcategory of $\mathfrak{M}_K$ whose objects are those $f \colon  K \to X$ where $X$ is a discrete space of cardinality not greater than $|\kappa^\gamma|$ for some $\gamma<\lambda.$ 

We define the family of arrows $\mathcal{F}_K$   in the following way. 
For each $\alpha,\gamma<\lambda$ we define the discrete space
$$X_\alpha^\gamma = \{B_\alpha(a)\colon a\in K\} \oplus \kappa^\gamma$$
and define $f_\alpha^\gamma \colon K \to X_\alpha^\gamma$, an object from the category $\mathfrak{D}_K$, such that $(f_\alpha^\gamma)^{-1}[B_\alpha(a)] = B_\alpha(a)$ for every $a \in K$.
Given ordinals $\xi<\delta<\lambda$, fix a map $r^\delta_\xi\colon \kappa^\delta\to\kappa^\xi\times\{0,1\}$ such that $|(r^\delta_\xi)^{-1}(\alpha,i)|=|\kappa^\delta|$ for every
$(\alpha,i)\in \kappa^\xi\times\{0,1\}$.
For each $\alpha,\xi<\lambda$ we fix a surjection $p^\xi_\alpha\colon \kappa^\xi\to \{B_\alpha(a)\colon a\in K\}$.
Given $\alpha \loe \beta<\lambda$ and $\xi\leq\delta<\lambda$ we define a  surjection $q^{(\beta,\delta)}_{(\alpha,\xi)} \colon  X^\delta_{\beta}\twoheadrightarrow X^\xi_{\alpha}$ as follows:
\[q^{(\beta,\delta)}_{(\alpha,\xi)}(x)=\begin{cases}
	B_{\alpha}(a),& \text{if }x = B_{\beta}(a),\\
	\pi(r^\delta_\xi(x)),& \text{if }x\in\kappa^\delta\text{ and }r^\delta_\xi(x)\in\kappa^\xi\times\{1\},\\
	p^\xi_{\alpha}(\pi(r^\delta_\xi(x))),& \text{if }x\in\kappa^\delta\text{ and }r^\delta_\xi(x)\in\kappa^\xi\times\{0\},
\end{cases}\]
where $\pi$ is the canonical projection from $\kappa \times \{0,1\}$ onto $\kappa$. Let $$\mathcal{F}_K=\left\{q^{(\beta,\delta)}_{(\alpha,\xi)}:\alpha\loe\beta<\lambda\;\&\; \xi\leq\delta<\lambda\right\}.$$

\begin{lemma}\label{l:2}
	The family $\mathcal{F}_K$ is  dominating in $\mathfrak{D}_K$ and $|\mathcal{F}_K|\loe \lambda.$
\end{lemma}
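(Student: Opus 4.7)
The plan is to verify the two defining conditions of a dominating family for $\mathcal{F}_K$ in $\mathfrak{D}_K$, together with the cardinality bound. The bound $|\mathcal{F}_K|\leq\lambda$ will be immediate since the family is indexed by quadruples $(\alpha,\beta,\xi,\delta)\in\lambda^{4}$ and $|\lambda^{4}|=\lambda$. The recurrent tool will be that, because the target carries a discrete ultrametric, any uniformly continuous $h\colon K\to W$ into an object of $\mathfrak{D}_K$ admits an ordinal $\beta_h<\lambda$ such that $h$ is constant on every ball $B_{\beta_h}(a)$; moreover, between two discrete ultrametric spaces uniform continuity is automatic.

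For condition (i), I will take an arbitrary object $h\colon K\to W$ of $\mathfrak{D}_K$ with $|W|\leq|\kappa^{\gamma_0}|$, pick $\alpha\geq\beta_h$ and $\xi\geq\gamma_0$, and define $q\colon X_\alpha^\xi\twoheadrightarrow W$ by $q(B_\alpha(a))=h(a)$ on the first summand and by any surjection $\kappa^\xi\twoheadrightarrow W$ on the second (possible since $|\kappa^\xi|\geq|W|$). The identity $q\circ f_\alpha^\xi=h$ then holds automatically, yielding $\mathfrak{D}_K(f_\alpha^\xi,h)\neq\emptyset$.

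The core of the argument is condition (ii). Given $A=f_\alpha^\xi\in\Cod(\mathcal{F}_K)$ and a $\mathfrak{D}_K$-morphism $f\colon h\to f_\alpha^\xi$, that is, a surjection $f\colon W\twoheadrightarrow X_\alpha^\xi$ with $f\circ h=f_\alpha^\xi$, I set $\beta=\max\{\alpha,\beta_h\}$ and $\delta=\max\{\xi,\gamma_0\}$ and aim to build a surjection $g\colon X_\beta^\delta\twoheadrightarrow W$ realising both $g\circ f_\beta^\delta=h$ and $f\circ g=q^{(\beta,\delta)}_{(\alpha,\xi)}$. On the ball summand I am forced to put $g(B_\beta(a))=h(a)$; this is consistent with the second identity because $f(h(a))=B_\alpha(a)=q^{(\beta,\delta)}_{(\alpha,\xi)}(B_\beta(a))$. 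On $\kappa^\delta$ I partition into level sets $N_y=(q^{(\beta,\delta)}_{(\alpha,\xi)})^{-1}(y)\cap\kappa^\delta$, $y\in X_\alpha^\xi$. The combinatorial properties designed into $r^\delta_\xi$ (every value has preimage of size $|\kappa^\delta|$) together with the surjectivity of $p^\xi_\alpha$ give $|N_y|=|\kappa^\delta|\geq|f^{-1}(y)|$, so I may fix a surjection $N_y\twoheadrightarrow f^{-1}(y)$ and let $g|_{N_y}$ be that surjection. The identity $f\circ g=q^{(\beta,\delta)}_{(\alpha,\xi)}$ then holds on $\kappa^\delta$ by construction, and $g$ is surjective onto $W$ since every $w\in W$ lies in $f^{-1}(f(w))$, which is exhausted by $g$ on $N_{f(w)}$.

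The main obstacle I expect is the simultaneous realisation of $f\circ g=q^{(\beta,\delta)}_{(\alpha,\xi)}$ and the surjectivity of $g$ onto $W$, while the values of $g$ on the ball summand are already pinned down by $g\circ f_\beta^\delta=h$. This obstacle is defused exactly by the combinatorial design of $r^\delta_\xi$ and $p^\xi_\alpha$ in the definition of $\mathcal{F}_K$, which make each fibre $N_y$ on the $\kappa^\delta$-side large enough (of size $|\kappa^\delta|$) to carry a surjection onto the corresponding $f^{-1}(y)\subseteq W$. Everything else reduces to routine checks, since all objects of $\mathfrak{D}_K$ are discrete and their uniform continuity reduces either to automatic triviality or to the standard ball-partition factorisation for uniformly continuous maps out of $K$ into discrete targets.
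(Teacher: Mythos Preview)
Your proof is correct and follows essentially the same strategy as the paper's: reduce condition~(i) to a single surjection using the ball-factorisation of a uniformly continuous map into a discrete target, and for condition~(ii) force $g(B_\beta(a))=h(a)$ on the ball summand while exploiting that every fibre of $q^{(\beta,\delta)}_{(\alpha,\xi)}$ over $X_\alpha^\xi$ meets $\kappa^\delta$ in a set of size $|\kappa^\delta|$, which dominates $|f^{-1}(y)|$. The paper unpacks the $\kappa^\delta$-side more explicitly---splitting $W$ into $h[K]$, $A_0$, $A_1$ and tracing the $r^\delta_\xi$, $p^\xi_\alpha$ structure case by case---whereas you observe directly that each $N_y$ has the right size and pick an arbitrary surjection $N_y\twoheadrightarrow f^{-1}(y)$; this is a cleaner packaging of the same combinatorics rather than a different argument.
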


\begin{proof}
It is obvious that $|\mathcal{F}_K|\loe\lambda.$ We prove that $\mathcal{F}_K$ satisfies condition (i) of ``being dominating''.
Let  $f\colon K\to Z$ be a uniformly continuous, where $Z$ is a discrete space of 
 cardinality not greater than $|\kappa^\gamma|$ for some $\gamma<\lambda$. We can consider the space $Z$ 
 as a disjoint union of the image $f[K]$ and a set $A$, where $|A|\le |\kappa^\gamma|$.
   Since $f$ is  uniformly continuous there is an ordinal  $\alpha<\lambda$  such that for each $x\in f[K]$ 
   there is a family $\mathcal{S}_x\subs\{B_\alpha(a):a\in K\}$ with $f^{-1}(x)=\bigcup \mathcal{S}_x$. 
Let $\beta=\alpha$

 and consider $f_\beta^\gamma\colon K\to X^\gamma_\beta.$ Define an arrow $q$ from $f_\beta^\gamma$ to $f$ as follows 
\[q(x)=
\begin{cases}
	f(y),&\text{if } x=f_\beta^\gamma(y)\text{ and }y\in K,\\
	g(x),&\text{if } x\in\kappa^\gamma,
\end{cases}\]
where $g\colon\kappa^\gamma\to A$ is an arbitrary surjection. 
Since the following diagram 
\amor
	$$\xymatrix{
		K\ar[d]_{f} \ar[dr]^{f_\beta^\gamma} &\\
		Z & X_\beta^\gamma\ar@{->>}[l]^q}$$

commutes then condition (i) is fulfilled.  
	
	In order to prove that $\mathcal{F}_K$ satisfies condition (ii) in the definition of ``being dominated'', assume that we have the commutative diagram
		\amor
	$$\xymatrix{
		&K\ar[ld]_{f_\alpha^\xi} \ar[d]^h &\\
		X_\alpha^\xi & Y\ar@{->>}[l]^f &}$$
		
where $f_\alpha^\xi\colon K\to X_\alpha^\xi$ is an $\mathcal{F}_K$-object 
(codomain of some arrows from $\mathcal{F}_K$), $h$ is a 
$\mathfrak{D}_K$-object and $f$ is $\mathfrak{D}_K$-arrow. The space $Y$ is a disjoint union of $h[K]$ and a set $A$, 
where $|A|\le |\kappa^\delta|$ for some $\delta$ such that $\xi\leq\delta<\lambda$. Let $A_0=f^{-1}(f^\xi_\alpha[K])\cap A$ and 
$A_1=A\setminus A_0$. Since $h$ is  uniformly continuous, there is an ordinal $\beta$ such that  
$\alpha<\beta<\lambda$   and for each $x\in h[K]$ there is a family $\mathcal{S}_x\subs\{B_\beta(a):a\in K\}$ with 
$h^{-1}(x)=\bigcup \mathcal{S}_x$.  
We are going to define a $\mathfrak{D}_K$-arrow $g\colon X_\beta^\delta\to Y$ such that $f\circ g=q^{(\beta,\delta)}_{(\alpha,\xi)}$.	
	$$\begin{tikzcd}
	&	K\arrow[dl,"f_\alpha^\xi"']\arrow[d,"h"]\arrow[dr,"f_\beta^\delta"] &\\
	X_\alpha^\xi&	Y\arrow[two heads]{l}[swap]{f} & X_\beta^\delta\arrow[bend left,two heads]{ll}[below]{q^{(\beta,\delta)}_{(\alpha,\xi)}}\arrow[dashed,two heads]{l}[swap]{g}
	\end{tikzcd}$$
 Since all spaces $X_\alpha^\xi, Y,X_\beta^\delta$ are discrete  we don't have to worry about continuity, we just need to make sure that the fiber goes to the fiber according to the map $q^{(\beta,\delta)}_{(\alpha,\xi)}$.

The situation is visualized in the following diagram.
	$$\begin{tikzcd}
&f_\beta^\delta[K]\arrow[bend left]{dddrr}[near start]{q^{(\beta,\delta)}_{(\alpha,\xi)}}\arrow[dashed]{ddl}{g}&&\kappa^\delta	\arrow{d}{r^\delta_\xi}\arrow{drr}{r^\delta_\xi} &&\\
&&&\kappa^\xi\times\{0\}\arrow[dashed]{dlll}{g}\arrow[dashed]{dl}{g}\arrow[bend left]{dd}{p_\alpha^\xi\circ\pi}
&&\kappa^\xi\times\{1\}\arrow{dd}{\pi}\arrow[dashed]{dl}{g}\\
h[K]\arrow[bend right]{drrr}{f}&&A_0\arrow{dr}[swap]{f}&&A_1\arrow{dr}[swap]{f}&\\
&&&f_\alpha^\xi[K]&&\kappa^\xi
\end{tikzcd}$$

Given $z\in f_\alpha^\xi[K]$,
we put $A^0_z=f^{-1}(z)\cap h[K]$ and $A^1_z=f^{-1}(z)\cap A\subs A_0$, 
whenever 
$A_z^0\ne\emptyset.$ Since $|(r^\delta_\xi)^{-1}[((p_\alpha^\xi)^{-1}(z))\times\{0\}]|=|\kappa^\delta|$, we can 
find a partition $\{B^0_z,B^1_z\}$ of the set $(r^\delta_\xi)^{-1}[((p_\alpha^\xi)^{-1}(z))\times\{0\}]$ such that 
$|B^0_z|=|B^1_z|=|\kappa^\delta|$ and two surjection $h^i_z\colon B^i_z\to A^i_z$, $i\in\{0,1\}.$  In the case $A_z^0=\emptyset$ let 
$h^i_z\colon B^i_z\to A^1_z$ be any surjection for $i\in\{0,1\}.$

Let $z\in\kappa^\xi.$ Since  $|(r^\delta_\xi)^{-1}((z,1))|=|\kappa^\delta|$, there is a surjection $h_z\colon  (r^\delta_\xi)^{-1}((z,1))\to f^{-1}(z).$ 
We define a map $g \colon X_\beta^\delta \to Y$ as follows

\[g(x)=\begin{cases}
	h^i_z(x)&\text{ if }  x\in B^i_z\subs (r^\delta_\xi)^{-1}[((p_\alpha^\xi)^{-1}(z))\times\{0\}]\text{ and } z\in f^\xi_\alpha[K],\\
h_z(x)	&\text{ if }  x\in (r^\delta_\xi)^{-1}((z,1))\text{ and } z\in \kappa^\xi,\\
h(y) &\text{ if } x=f^\delta_\beta(y)\text{ and } y\in K. 
\end{cases}\]

It is easy to check that $h$  has the desired properties.
\end{proof}

We now mimic the considerations of Section~\ref{sekcja:2}, proving the following.

\begin{lemma}\label{l:22}
	The category $\mathfrak{D}_K$  is directed and has the amalgamation property.
\end{lemma}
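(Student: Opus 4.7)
The plan is to mimic, almost verbatim, the proof of Lemma \ref{l:1}, replacing the topological pullback and product in $\mathfrak{Comp}$ by the set-theoretic pullback and product, and replacing continuity by uniform continuity. The main ingredients I would use are: (i) any map between discrete $\lambda$-ultrametric spaces is automatically uniformly continuous, since $u(a,b)\geq 1$ already forces $a=b$ in such a space, so $\delta=1$ witnesses the condition for every $\epsilon\geq 1$; (ii) if $f,g\colon K\to D$ into discrete $\lambda$-ultrametric spaces are uniformly continuous with witnesses $\delta_f,\delta_g<\lambda$ (meaning $u(a,b)\geq\delta_f$ forces $f(a)=f(b)$, and similarly for $g$), then $\delta=\max(\delta_f,\delta_g)<\lambda$ witnesses the uniform continuity of $(f,g)$; and (iii) cardinality is preserved by the relevant constructions thanks to the regularity of $\lambda$.

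For amalgamation, given $\mathfrak{D}_K$-objects $f\colon K\to X$, $g\colon K\to Y$, $h\colon K\to Z$ and arrows $q_1\colon f\to h$, $q_2\colon g\to h$, I would form the set-theoretic pullback
\[
W=\{(x,y)\in X\times Y\colon q_1(x)=q_2(y)\},
\]
endowed with the discrete $\lambda$-ultrametric, and put $k(a)=(f(a),g(a))$, which is well defined since $q_1\circ f=h=q_2\circ g$. The coordinate projections $f_1\colon W\to X$ and $g_1\colon W\to Y$ are surjective (because $q_2$ and $q_1$ are), and they satisfy $q_1\circ f_1=q_2\circ g_1$, $f_1\circ k=f$, $g_1\circ k=g$ by construction, yielding the required commuting square of the amalgamation property.

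For directedness, given $f\colon K\to X$ and $g\colon K\to Y$ in $\mathfrak{D}_K$, I would take $Z=X\times Y$ (regarded as a discrete $\lambda$-ultrametric space) with $h=(f,g)$ and the canonical projections $\pi_X, \pi_Y\colon Z\to X,Y$; the identities $\pi_X\circ h=f$, $\pi_Y\circ h=g$ are immediate, and surjectivity of the projections is clear.

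The only substantive check is that $W$ and $X\times Y$ remain in $\mathfrak{D}_K$: their cardinality is at most $|X|\cdot|Y|\leq|\kappa^{\gamma_X}|\cdot|\kappa^{\gamma_Y}|\leq|\kappa^\gamma|$ with $\gamma=\max(\gamma_X,\gamma_Y)<\lambda$. This is the only place where the regularity of $\lambda$ is used, and it is the ``main obstacle'' such as there is; everything else is a direct transcription of the proof of Lemma \ref{l:1}, with the uniform-continuity verifications made trivial by observations (i) and (ii) above.
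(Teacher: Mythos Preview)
Your proof is correct and follows the same approach as the paper: pullback for amalgamation, product for directedness, both exactly as in Lemma~\ref{l:1}. The only differences are cosmetic. You argue uniform continuity of $k=(f,g)$ by taking $\delta=\max(\delta_f,\delta_g)$ directly, whereas the paper phrases the same computation via ``$k^{-1}(x,y)=f^{-1}(x)\cap g^{-1}(y)$ and balls are either disjoint or comparable''; these are the same argument. You also include the cardinality check that $|W|\le|X\times Y|\le|\kappa^\gamma|$ with $\gamma=\max(\gamma_X,\gamma_Y)$, which the paper omits. One small remark: this step does not actually use the regularity of $\lambda$, since the maximum of two ordinals below $\lambda$ is below $\lambda$ for any $\lambda$; regularity would only be needed for a supremum over $\lambda$-many ordinals.
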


\begin{pf}
	Let $f\colon K\to X$,  $g\colon K\to Y$, $h\colon K\to Z$ be  $\mathfrak{D}_K$-objects and $q_1\colon f\to h$, $q_2\colon g\to h$ be $\mathfrak{D}_K$-arrows. By the definition $q_1\colon X\to Z$, $q_2\colon Y\to Z$ and $q_1\circ f=h=q_2\circ g$. Consider  pullback of  $q_1$ and $q_2$ i.e. the space $W=\{(x,y)\in X\times Y\colon q_1(x)=q_2(y)\}$  along with a pair of surjections $f_1\colon W\to X$ and $g_1\colon W\to Y$ such that $q_1\circ f_1=q_2\circ g_1$.
	\SelectTips{cm}{11}
	$$\xymatrix{
		& &X\ar@{->>}[dr]^{q_1} &\\
		W\ar@{->>}[rru]^{f_1}\ar@{->>}[drr]_{g_1} & K\ar[ur]_f\ar[rd]^g\ar[rr] ^h\ar[l]_(.36){k}&& Z\\
		&& Y\ar@{->>}[ur]_{q_2} & }$$ 
	
	From the pullback property it follows that exists a unique $k\colon K\to W$ such that  $f=f_1\circ k$ and $g=g_1\circ k$.  Since $f,g$ are uniformly continuous and $k^{-1}(x,y)=f^{-1}(x)\cap g^{-1}(y)$ for $(x,y)\in W$ and balls in an ultrametric space are either disjoint or comparable, the map $k$ is uniformly continuous. Therefore, $f_1$ is a $\mathfrak{D}_K$-arrow from $k$ to $f$ and $g_1$ is a $\mathfrak{D}_K$-arrow from $k$ to $g$  and the following diagram
	$$\xymatrix{&f\ar[dr]^{q_1}&\\
		k \ar[ur]^{f_1} \ar[dr]_{g_1}&&h\\
		&g\ar[ur]_{q_2}&
	}$$
	
	commutes. This means that the category $\mathfrak{D}_K$ has the  amalgamation property.
	
	In order to show that $\mathfrak{D}_K$ is directed, let $f\colon K\to X$ and 
	$g\colon K\to Y$ be a uniformly continuous mappings from $K$ to  discrete spaces.
	Consider $X\times Y$ with discrete topology and let $h=(f,g)$. Since $f,g$ are uniformly continuous and $h^{-1}(x,y)=f^{-1}(x)\cap g^{-1}(y)$ for $(x,y)\in X\times Y$ and balls in ultrametric space are disjoint or comparable the map $h$ is uniformly continuous. Obviously projections $\pi_X\colon X\times Y\to X$ and $\pi_Y\colon X\times Y\to Y$ are surjection such that $\pi_X\circ h=f$ and $\pi_Y\circ h=g$, this complete the proof that $\mathfrak{D}_K$ is directed.
\end{pf}

\begin{theorem}\label{th:10}
	 There exists a continuous \fra $\lambda$-sequence in $\mathfrak{D}_K.$
\end{theorem}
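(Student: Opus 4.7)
The plan is to apply Theorem~\ref{exists} to the category $\mathfrak{D}_K$, with the regular cardinal $\lambda$ playing the role of $\kappa$ in that theorem. Three of the four hypotheses are already at hand: Lemma~\ref{l:22} shows that $\mathfrak{D}_K$ is directed and satisfies the amalgamation property, and Lemma~\ref{l:2} exhibits a dominating family $\mathcal{F}_K$ with $|\mathcal{F}_K|\le\lambda$. The only remaining step is to verify $\lambda$-completeness (or the weaker $\lambda$-boundedness mentioned in the remark following Theorem~\ref{exists}).

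For this, I would take an arbitrary inverse sequence $\{f_\alpha\colon K\to X_\alpha\}_{\alpha<\delta}$ in $\mathfrak{D}_K$ of length $\delta<\lambda$, bonded by surjections $q_\alpha^\beta\colon X_\beta\to X_\alpha$, and form $X=\liminv X_\alpha$ as the set-theoretic inverse limit endowed with the discrete topology. Writing $f_\infty\colon K\to X$ for the induced map $f_\infty(x)=(f_\alpha(x))_{\alpha<\delta}$ and $p_\alpha\colon X\to X_\alpha$ for the canonical projections, the verification then proceeds in three steps: (i) each $p_\alpha$ is surjective, via a transfinite choice of threads through any prescribed coordinate, using surjectivity of the bonding maps and $\delta<\lambda\le\kappa$; (ii) $f_\infty$ is uniformly continuous, since each $f_\alpha$ supplies an ordinal $\sigma_\alpha<\lambda$ with $u(a,b)\ge\sigma_\alpha$ forcing $f_\alpha(a)=f_\alpha(b)$, and the supremum $\sigma=\sup_{\alpha<\delta}\sigma_\alpha$ is still below $\lambda$ by regularity of $\lambda$, so $u(a,b)\ge\sigma$ already forces $f_\infty(a)=f_\infty(b)$; (iii) $|X|\le\prod_{\alpha<\delta}|X_\alpha|\le|\kappa^\gamma|$ for some $\gamma<\lambda$, again by regularity of $\lambda$ applied to the cardinality witnesses $\gamma_\alpha<\lambda$ for the $X_\alpha$. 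Thus $(f_\infty,X)$ is itself a $\mathfrak{D}_K$-object, providing a limit of the sequence.

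With $\lambda$-completeness established, Theorem~\ref{exists} combined with Lemmas~\ref{l:22} and~\ref{l:2} immediately produces a continuous \fra $\lambda$-sequence in $\mathfrak{D}_K$.

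The only real, if modest, obstacle is step (iii) — or rather, the underlying fact that the topological inverse limit of discrete spaces is generally not discrete, so the ``limit'' used here must be the set-theoretic inverse limit equipped with the discrete topology, not the topological inverse limit taken in the ambient category $\mathfrak{M}_K$. The regularity of $\lambda$ is doing essentially all the work in keeping the relevant suprema of ordinals and cardinalities strictly below $\lambda$.
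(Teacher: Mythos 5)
Your overall route is exactly the paper's: the published proof of Theorem~\ref{th:10} is a single sentence invoking Theorem~\ref{exists} together with Lemma~\ref{l:2} (and, implicitly, Lemma~\ref{l:22}). Where you go beyond the paper is in attempting to verify $\lambda$-completeness, which the paper passes over in silence. Your steps (ii) and (iii) are fine, and regularity of $\lambda$ is indeed doing the work there. The weak point is step (i) --- not step (iii), which is where you locate the ``only real obstacle.'' For an arbitrary (non-continuous) inverse sequence of length $\delta$, the transfinite lifting of a thread through a prescribed point gets stuck at limit stages $\eta<\delta$: a coherent partial thread $(x_\beta)_{\beta<\eta}$ need not lie in the image of $X_\eta\to\varprojlim_{\beta<\eta}X_\beta$, i.e.\ $\bigcap_{\beta<\eta}(q^\eta_\beta)^{-1}(x_\beta)$ can be empty even though every bonding map is surjective; neither surjectivity of the bonding maps nor $\delta<\lambda$ rescues this. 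Worse, in a category all of whose arrows are surjections the set-theoretic inverse limit need not satisfy the universal property at all: a cone of surjections $(w_\alpha\colon V\to X_\alpha)$ can induce a non-surjective map $V\to\varprojlim X_\alpha$ (e.g.\ the eventually-zero sequences restricting onto each $2^n$), and for $\lambda>\omega$ one can check that such a sequence has no limit in $\mathfrak{D}_K$ at all, since any candidate would have to be simultaneously countable and of size continuum. So $\mathfrak{D}_K$ is not literally $\lambda$-complete in the sense defined in the Preliminaries.

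This defect is inherited from the paper rather than introduced by you, and it is repairable: the recursion behind Theorem~\ref{exists} only ever needs limits over the \emph{continuous} initial segments of the sequence being constructed, and for a continuous $\delta$-sequence the obstruction disappears --- at a limit stage $\eta$ the partial thread already \emph{is} a point of $X_\eta=\varprojlim_{\beta<\eta}X_\beta$, so your lifting argument goes through and your (ii) and (iii) show the resulting cone is again a $\mathfrak{D}_K$-object. Alternatively one can retreat to the $\lambda$-boundedness hypothesis mentioned in the remark after Theorem~\ref{exists}. Either way, you should restrict your completeness claim to continuous sequences (or to producing cones rather than genuine limits); as written, step (i) asserts surjectivity for all sequences and the justification offered does not cover the limit stages.
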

\begin{pf}
	Since $\mathcal{F}_K$ is  dominating in $\mathfrak{D}_K$ 	and $|\mathcal{F}_K|\loe \lambda$ there exists a continuous \fra $\lambda$-sequence in $\mathfrak{D}_K$ by Theorem \ref{exists}.
\end{pf}

We say that a subset $A$ of a $\lambda$-ultrametric space $(X,d)$ of weight $\kappa$  is \textit{uniformly nowhere dense} if for every $\alpha<\lambda$ there is $\beta>\alpha$ such that 
$$\{B_\beta(a) \colon B_\beta(a)\cap A=\emptyset\;\&\; a\in B_\alpha(b)\}\ne\emptyset$$ for every $b\in X.$ Note that every uniformly nowhere dense subset of the  ultrametric space $\kappa^\lambda$
is nowhere dense.   We define the categories $\mathfrak{M}$ and $\mathfrak{D}$ as follows. The objects of $\mathfrak{M}$ are $\lambda$-ultrametric space $X$ of weight not greater than $\kappa^{<\lambda}$ and $\lambda$-bounded  and arrows of $\mathfrak{M}$ are a uniformly continuous surjection. We define $\mathfrak{D}$ to be the full subcategory of $\mathfrak{M}$ whose objects are a discrete spaces of cardinality not greater than  $|\kappa^\gamma|$ for some $\gamma<\lambda.$

\begin{theorem}\label{fra-sequ-ult-la}
	Assume that  $\vec{\phi}=(\phi_\alpha\colon \alpha<\lambda)$  is a continuous \fra $\lambda$-sequence in $\mathfrak{D}_K$, where $\phi_\alpha\colon K\to U_\alpha$ for each $\alpha<\kappa.$   Then 
	\begin{enumerate}
		\item[{\rm(1)}] $\vec{u}=(U_\alpha\colon \alpha<\lambda)$ is a \fra sequence in the category $\mathfrak{D}$.
		\item[{\rm(2)}]The limit map $\phi_{\lambda}\colon K\to \lim \vec{u}$ has a left inverse, i.e. there is $r\colon \lim \vec{u}\to K$
		such that $r\circ \phi_{\lambda}=\ide_K$.
		\item[{\rm(3)}] The image $\phi_{\lambda}[K]\subs U_{\lambda}=\lim \vec{u}$  is uniformly nowhere dense.
	\end{enumerate}
\end{theorem}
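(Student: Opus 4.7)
The plan is to mirror the proof of Theorem~\ref{fra-sequ}, adapted to the $\lambda$-ultrametric, uniformly continuous setting. Parts (1) and (2) follow formally from the Fraïssé property of $\vec{\phi}$ and Theorem~\ref{uni}; part (3), where nowhere density must be upgraded to the \emph{uniform} variant, is the technical heart.

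For part (1), I would apply the obvious forgetful functor $\mathfrak{D}_K \to \mathfrak{D}$ to $\vec{\phi}$. Condition (U) in $\mathfrak{D}$ is immediate: given a discrete $X \in \mathfrak{D}$, the constant map $h : K \to X$ is a uniformly continuous $\mathfrak{D}_K$-object, so (U) for $\vec{\phi}$ yields some $\alpha$ and $q : U_\alpha \twoheadrightarrow X$ with $q \circ \phi_\alpha = h$, witnessing (U) for $\vec{u}$. For (A), given a $\mathfrak{D}$-surjection $f : Y \twoheadrightarrow U_\alpha$, fix a set-theoretic section $u \mapsto y_u$ of $f$ and set $h(k) := y_{\phi_\alpha(k)}$; since $\phi_\alpha$ is uniformly continuous into a discrete space, its fibres are unions of balls of some fixed radius $\delta_0 < \lambda$, so $h$ is uniformly continuous and a $\mathfrak{D}_K$-object. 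Then $f$ becomes a $\mathfrak{D}_K$-arrow from $h$ to $\phi_\alpha$, and (A) for $\vec{\phi}$ delivers the required $g : U_\beta \to Y$ with $f \circ g = u_\alpha^\beta$.

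For part (2), assuming $K$ is spherically complete so that Theorem~\ref{th:4.1} applies, I would write $K = \liminv \{K_\alpha\}_{\alpha < \lambda}$ with $K_\alpha = \{B_\alpha(a) : a \in K\}$ discrete. The projections $p_\alpha : K \to K_\alpha$ form a continuous $\lambda$-sequence $\vec{k}$ in $\mathfrak{D}_K$ whose limit object is $\ide_K$. Applying Theorem~\ref{uni} to $\vec{\phi}$ and $\vec{k}$ produces an arrow of sequences $F : \vec{\phi} \to \vec{k}$, and $r := \lim F : U_\lambda \to K$ is the desired left inverse, satisfying $r \circ \phi_\lambda = \ide_K$.

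Part (3) is the main obstacle: the sub-ball avoiding $\phi_\lambda[K]$ must be produced at a single depth $\beta_0$ uniformly in $b \in U_\lambda$. Fix $\alpha_0 < \lambda$ and choose $\xi \le \delta < \lambda$ and $\beta_0 > \alpha_0$. I would exploit the explicit design of $\mathcal{F}_K$: the arrow $q^{(\beta_0,\delta)}_{(\alpha_0,\xi)} : X_{\beta_0}^\delta \twoheadrightarrow X_{\alpha_0}^\xi$ is built via the surjection $p^\xi_{\alpha_0} : \kappa^\xi \twoheadrightarrow \{B_{\alpha_0}(a) : a \in K\}$ and the fibrewise-large splitting $r^\delta_\xi$ (with $|(r^\delta_\xi)^{-1}(\eta,i)| = |\kappa^\delta|$) so that above every point $B_{\alpha_0}(a) \in X_{\alpha_0}^\xi$ the $q^{(\beta_0,\delta)}_{(\alpha_0,\xi)}$-fibre contains $\kappa^\delta$-points of $X_{\beta_0}^\delta$ lying outside $f_{\beta_0}^\delta[K]$. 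Amalgamating in $\mathfrak{D}_K$ the two arrows into $f_{\alpha_0}^\xi$ given by $q^{(\beta_0,\delta)}_{(\alpha_0,\xi)} : f_{\beta_0}^\delta \to f_{\alpha_0}^\xi$ and $s : \phi_{\gamma_1} \to f_{\alpha_0}^\xi$ (the latter from (U) for $\vec{\phi}$), and then absorbing the amalgam into $\vec{\phi}$ via another application of (U) or (A), produces a stage $\gamma$ and a $\mathfrak{D}_K$-arrow $t : \phi_\gamma \to f_{\beta_0}^\delta$ coordinated with the bonding map $u_{\alpha_0}^\gamma$. The set $t^{-1}[\kappa^\delta] \subseteq U_\gamma$ is then disjoint from $\phi_\gamma[K]$ and, by the surjectivity built into $p^\xi_{\alpha_0}$ and $r^\delta_\xi$, meets every $\alpha_0$-ball of $U_\gamma$; pulling back along $u_\gamma : U_\lambda \to U_\gamma$ places a $\beta_0$-sub-ball disjoint from $\phi_\lambda[K]$ inside every $\alpha_0$-ball of $U_\lambda$. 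The main difficulty is verifying that the combinatorial design of $\mathcal{F}_K$ truly delivers this uniform hitting property after the amalgamation step — mere nowhere density would follow from a much cruder argument.
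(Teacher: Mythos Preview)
Your arguments for (1) and (2) are essentially what the paper intends (it explicitly defers both to the proof of Theorem~\ref{fra-sequ}); the section-lifting you use in (1) is precisely the discrete analogue of Theorem~\ref{project}. One quibble: in (2) you add the hypothesis that $K$ is spherically complete, which the statement does not assume --- you should check whether the paper's ambient assumptions on $K$ already give you the inverse-limit representation you need, or whether this is a genuine gap in the theorem as stated.

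For (3) your route is much more elaborate than the paper's and, as written, does not close. The indices $\alpha_0,\beta_0$ in $\mathcal{F}_K$ refer to radii of balls \emph{in $K$}: the space $X_{\alpha_0}^\xi=\{B_{\alpha_0}(a):a\in K\}\oplus\kappa^\xi$ is built from $K$-balls, not from the sequence $\vec u$. Uniform nowhere density, however, is a statement about balls of $U_\lambda$, which are (up to index shift) the fibres of the projections $u_\alpha\colon U_\lambda\to U_\alpha$. After your amalgamation-and-absorption step you obtain $t\colon U_\gamma\to X_{\beta_0}^\delta$ with $q^{(\beta_0,\delta)}_{(\alpha_0,\xi)}\circ t=s\circ u_{\gamma_1}^\gamma$; the surjectivity built into $\mathcal{F}_K$ then tells you only that $t^{-1}[\kappa^\delta]$ meets every fibre of $s\circ u_{\gamma_1}^\gamma$. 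Those fibres are in general \emph{coarser} than the fibres of $u_{\gamma_1}^\gamma$ (i.e.\ unions of several $\gamma_1$-balls of $U_\lambda$), so you have not shown that every single $\gamma_1$-ball contains a sub-ball avoiding $\phi_\lambda[K]$. Your phrase ``coordinated with the bonding map $u_{\alpha_0}^\gamma$'' is where the conflation occurs: nothing in the construction links $t$ to $u_{\alpha_0}^\gamma$.

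The paper sidesteps all of this with a single application of (A), not touching $\mathcal{F}_K$ at all. Given $\alpha<\lambda$, set $Y=U_\alpha\times\kappa$ (discrete), $g(x)=(\phi_\alpha(x),0)$, and let $p\colon Y\to U_\alpha$ be the first projection. Then $p$ is a $\mathfrak{D}_K$-arrow from $g$ to $\phi_\alpha$, so (A) produces $\beta\ge\alpha$ and $h\colon U_\beta\to Y$ with $p\circ h=u_\alpha^\beta$ and $h\circ\phi_\beta=g$. For every $a\in U_\alpha$ and every $\gamma\ne 0$ the nonempty set $u_\beta^{-1}(h^{-1}(\{(a,\gamma)\}))$ is contained in $u_\alpha^{-1}(\{a\})$ and disjoint from $\phi_\lambda[K]$, since $h\circ\phi_\beta$ takes values only in $U_\alpha\times\{0\}$. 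This delivers the uniform witness $\beta$ directly, with no amalgamation and no reference to the dominating family.
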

\begin{proof}
	We prove property (3) only; properties (1) and (2) can be proved just like (1) and (2) of Theorem~\ref{fra-sequ}. Fix $\alpha<\lambda$. Let $Y=U_\alpha\times\kappa$ with the 
	discrete topology and define  maps $g\colon K\to Y$ and $p\colon Y\to U_\alpha$ as follows 
	$g(x)=(\phi_\alpha(x),0)$  for $x\in K$ and $p(a,\gamma)=a$ for $(a,\gamma)\in Y$. 
	Since the projection $p\colon Y\to U_\alpha$ is a morphism from $g$ to $\phi_\alpha$ and 
	$\vec{\phi}$ is a \fra sequence in $\mathfrak{D}_K$ there are $\beta\ge\alpha$ and 
	$h\colon U_\beta\to Y$ such that $p \circ h=u_\alpha^\beta$ and $h\circ \phi_\beta=g.$  
	Observe that $u_\beta^{-1}(h^{-1}(\{(a,\gamma)\}))\subs u_\alpha^{-1}(\{a\})$  
	and $u_\beta^{-1}(h^{-1}(\{(a,\gamma)\}))\cap \phi_{\lambda}[K]=\emptyset$ for all 
	$0<\gamma<\kappa$. This means that the set $\phi_{\lambda}[K]\subs U_{\lambda}=\lim \vec{u}$  is uniformly nowhere dense.
\end{proof}

By Theorem \ref{ultra}, every ultrametric space of weight $\kappa$ can be embedded into $\kappa^\kappa$. We shall prove that every $\lambda$-ultrametric space $(K,u)$ of weight $\kappa$  can be embedded into $\kappa^\lambda$ as a uniformly nowhere dense subset.

We say that $h\colon X\to Y$ is a \textit{uniform embedding } if $h$ is a uniformly continuous bijection from the ultrametric space $X$ onto the subspace  $h[X]$ of  the ultrametric space $Y$ and its inverse function $h^{-1}\colon h[X]\to X$ is uniformly continuous, where $h[X]$ inherits an ultrametric from $Y$.
\begin{corollary} A $\lambda$-ultrametric space $(K,u)$ of weight $\kappa$  can be uniformly embedded into $\kappa^\lambda$ as a uniformly nowhere dense subset.
\end{corollary}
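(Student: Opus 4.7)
My plan is to combine Theorem \ref{th:10}, Theorem \ref{fra-sequ-ult-la}, and the uniqueness of \fra limits, and then to identify the resulting limit with $\kappa^\lambda$. First, by Theorem \ref{th:10} there exists a continuous \fra $\lambda$-sequence $\vec\phi=(\phi_\alpha\colon\alpha<\lambda)$ in $\mathfrak D_K$, say $\phi_\alpha\colon K\to U_\alpha$. Write $U_\lambda=\lim\vec u$ and let $\phi_\lambda\colon K\to U_\lambda$ be the induced map. Theorem \ref{fra-sequ-ult-la}(2) provides a uniformly continuous left inverse $r\colon U_\lambda\to K$ with $r\circ\phi_\lambda=\ide_K$; since $\phi_\lambda$ itself is a uniformly continuous $\mathfrak M$-arrow and its inverse on $\phi_\lambda[K]$ is the restriction of $r$, the map $\phi_\lambda$ is a uniform embedding. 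Part (3) of the same theorem tells us that $\phi_\lambda[K]$ is uniformly nowhere dense in $U_\lambda$.

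It remains to identify $U_\lambda$ with $\kappa^\lambda$ as a uniform space. By Theorem \ref{fra-sequ-ult-la}(1), $\vec u$ is a \fra sequence in $\mathfrak D$, and $\mathfrak D$ enjoys the amalgamation property by the same pullback argument as in Lemma \ref{l:22} (the pullback of two surjections between discrete spaces of cardinality at most $|\kappa^\gamma|$ and $|\kappa^\delta|$ is discrete of cardinality at most $|\kappa^{\max(\gamma,\delta)}|$, hence still in $\mathfrak D$). Therefore, by \cite[Theorem~4.6]{KubFra}, any two \fra limits in $\mathfrak D$ are $\mathfrak M$-isomorphic, so it suffices to realize $\kappa^\lambda$ itself as such a limit. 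For this I would take the canonical continuous inverse sequence $\vec y=(\kappa^\alpha\colon\alpha<\lambda)$ with the restriction projections $\pi^\beta_\alpha\colon\kappa^\beta\to\kappa^\alpha$, whose limit in $\mathfrak M$ is precisely $\kappa^\lambda$, and check the two \fra conditions directly. Condition (U) holds because every discrete $X\in\obj{\mathfrak D}$ with $|X|\leq|\kappa^\gamma|$ admits a surjection from $\kappa^\gamma$. For condition (A), given a surjection $f\colon Y\to\kappa^\alpha$ with $|Y|\leq|\kappa^\gamma|$, one picks $\beta\geq\alpha$ large enough that each fiber of $\pi^\beta_\alpha$, of size $|\kappa^{[\alpha,\beta)}|$, is at least as large as $|Y|$, and then defines $g\colon\kappa^\beta\to Y$ fiber-by-fiber via arbitrary surjections $\{x\}\times\kappa^{[\alpha,\beta)}\to f^{-1}(x)$ for $x\in\kappa^\alpha$.

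Composing the uniform embedding $\phi_\lambda\colon K\to U_\lambda$ with the resulting uniform homeomorphism $U_\lambda\cong\kappa^\lambda$ yields the desired uniform embedding of $K$ into $\kappa^\lambda$ as a uniformly nowhere dense subset. I expect the only genuine hurdle to be the direct verification that the canonical inverse sequence $\vec y$ satisfies condition (A) in $\mathfrak D$, i.e.\ the bookkeeping with fiber cardinalities across different levels; everything else falls out immediately from the \fra machinery already developed in the paper.
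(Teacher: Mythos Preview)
Your proposal is correct and follows essentially the same route as the paper: invoke Theorem~\ref{th:10} to obtain a \fra $\lambda$-sequence in $\mathfrak D_K$, apply Theorem~\ref{fra-sequ-ult-la} to obtain the uniform embedding with uniformly nowhere dense image, and then identify $U_\lambda$ with $\kappa^\lambda$ via uniqueness.

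The only difference is in how that last identification is carried out. The paper passes through the generic-object framework: it checks that $\mathfrak D\subseteq\mathfrak M$ satisfy (L0)--(L3), applies Theorem~\ref{fra-gen} to conclude that $U_\lambda$ is $\mathfrak D$-generic, and then asserts (without further detail) that this forces $U_\lambda\cong\kappa^\lambda$. You instead stay within the \fra-sequence language and directly verify that the canonical restriction sequence $(\kappa^\alpha,\pi^\beta_\alpha)_{\alpha<\lambda}$ is itself a \fra sequence in $\mathfrak D$, then appeal to uniqueness of \fra limits. Your route is slightly more self-contained, since it makes the identification with $\kappa^\lambda$ explicit rather than leaving it as an assertion; the paper's route has the advantage of not needing the fiber-cardinality bookkeeping you flag at the end. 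Either way the argument is short, and your verification of condition (A) via choosing $\beta$ with $|\kappa^{[\alpha,\beta)}|\geq|Y|$ and defining $g$ fiberwise is exactly what is needed.
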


\begin{pf}
	Let $(K,u)$ be a $\lambda$-ultrametric space of weight $\kappa.$ By Theorem \ref{th:10} there exists a $\lambda$-\fra sequence 
	$(\phi_\alpha:\alpha<\lambda)$ in $\mathfrak{D}_K,$ where $\phi_\alpha\colon K\to U_\alpha$ for each $\alpha<\lambda.$ Theorem \ref{fra-sequ-ult-la}(1) implies that $\vec{u}=(U_\alpha\colon \alpha<\lambda)$ is a \fra sequence in the category $\mathfrak{D}$.
	Since $\mathfrak{D}\subseteq \mathfrak{M}$ satisfy {\rm(L0)}--{\rm(L3)} the limit $U_{\lambda}=\lim{\vec{u}}$ is a $\mathfrak{M}$-generic object by Theorem \ref{fra-gen}. Therefore $U_{\lambda}$ is homeomorphic to $\kappa^\lambda.$ Using Theorem \ref{fra-sequ-ult-la}~(2), the map
	$\phi_{\lambda}\colon K\to\lim{\vec{u}}$ is a uniform embedding and by  Theorem \ref{fra-sequ-ult-la}(3) the image $\phi_{\lambda}[K]$ is uniformly nowhere dense  in $\kappa^\lambda.$
\end{pf}

\begin{theorem}\label{generic-ult-la}
	If $\map{\eta}{K}{\kappa^\lambda}$ is  uniformly embedded such that $\eta[K]$ is of weight $\kappa$ and 
	uniformly nowhere dense in the  $\lambda$-ultrametric space $\kappa^\lambda$, then $\map{\eta}{K}{\kappa^\lambda}$ is $\mathfrak{D}_K$-generic.
\end{theorem}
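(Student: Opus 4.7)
The plan is to verify condition (G2) for $\eta$. Given a $\mathfrak{D}_K$-arrow $f$ from $g_Y\colon K\to Y$ to $g_X\colon K\to X$ (where $X,Y$ are discrete spaces of cardinality at most $|\kappa^\gamma|$ for some $\gamma<\lambda$ and $f\circ g_Y = g_X$) and an $\mathfrak{M}_K$-arrow $g\colon \kappa^\lambda\to X$ with $g\circ\eta = g_X$, I construct a uniformly continuous surjection $h\colon \kappa^\lambda\to Y$ satisfying $f\circ h = g$ and $h\circ\eta = g_Y$.

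Since $X$ and $Y$ are discrete, uniform continuity into them is exactly being constant on balls of sufficiently small radius. I first select levels $\alpha,\alpha' < \lambda$ such that: $g$ is constant on every $\alpha$-ball of $\kappa^\lambda$; $g_Y$ is constant on every $\alpha'$-ball of $K$; and, by uniform continuity of $\eta^{-1}\colon\eta[K]\to K$, whenever $\eta(k_1),\eta(k_2)$ lie in a common $\alpha$-ball of $\kappa^\lambda$ the points $k_1,k_2$ lie in a common $\alpha'$-ball of $K$. (Enlarging $\alpha$ if necessary, this can always be arranged.) The uniformly nowhere dense hypothesis then yields $\beta > \alpha$ such that every $\alpha$-ball of $\kappa^\lambda$ contains a $\beta$-ball disjoint from $\eta[K]$. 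Using regularity of $\lambda$, I fix $\delta \geq \beta+\gamma$ large enough that each such $\beta$-ball splits into at least $|\kappa^\gamma|$ pairwise disjoint $\delta$-balls, necessarily all disjoint from $\eta[K]$.

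Now I define $h$ at level $\delta$. For each $\delta$-ball $D\subs\kappa^\lambda$, let $A\supseteq D$ be the unique containing $\alpha$-ball and set $x := g(A)\in X$. If $D\cap\eta[K]\neq\emptyset$, pick any $k\in K$ with $\eta(k)\in D$ and set $h|_D \equiv g_Y(k)$; this is unambiguous by the earlier choice of $\alpha,\alpha'$, and $f(h|_D) = f(g_Y(k)) = g_X(k) = g(\eta(k)) = x$, so $h|_D\in f^{-1}(x)$. If $D\cap\eta[K]=\emptyset$, assign $h|_D\in f^{-1}(x)$ in such a way that, as $D$ ranges over the free $\delta$-balls inside $A$, the assigned values exhaust $f^{-1}(x)$; this is possible because at least $|\kappa^\gamma|\geq|f^{-1}(x)|$ free $\delta$-balls are available in every $\alpha$-ball.

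The required properties are then straightforward: $h$ is constant on $\delta$-balls, hence uniformly continuous; $h\circ\eta = g_Y$ by the first clause of the construction; $f\circ h = g$ since in both clauses the value of $h|_D$ lies in $f^{-1}(x) = f^{-1}(g(A))$; and $h$ is surjective because $g$ is surjective and within every single $\alpha$-ball $A$ sent by $g$ to a given $x$ the fiber $f^{-1}(x)$ is entirely covered by the free $\delta$-balls inside $A$. The main technical obstacle is the simultaneous coordination of the various levels: uniform continuity of $g$, $g_Y$ and $\eta^{-1}$, the uniformly nowhere dense property of $\eta[K]$, and a counting argument for surjectivity in each fiber must all be reconciled at a single level $\delta$; this is the ultrametric analogue of the inductive P-set/$F$-space argument in the proof of Theorem~\ref{generic}, and is where the uniform embedding and uniformly nowhere dense hypotheses are essentially used.
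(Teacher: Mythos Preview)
Your proof is correct and follows essentially the same route as the paper's: both arguments verify (G2) by choosing a level $\alpha$ at which $g$ factors through balls (uniform continuity of $g$), a further level coming from the uniform continuity of $g_Y$ together with that of $\eta^{-1}$, then invoke the uniformly nowhere dense hypothesis to locate free balls, and finally a counting step to guarantee enough free $\delta$-balls in each fiber to make $h$ surjective. The paper organizes the bookkeeping via families $\mathcal{A}^\beta_y,\mathcal{R}^\beta_y$ indexed by $y\in Y$, while you work directly with $\delta$-balls and a two-clause definition of $h$; these are cosmetic differences, and the essential uses of the hypotheses (uniform embedding, uniformly nowhere dense, cardinality bounds) occur at the same places.
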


\begin{pf}
	It  suffices to show that $\eta$ satisfies condition (G2), i.e. discrete spaces $X,Y$ such that $|X|\leq|\kappa^{\delta_0}|$ and $|Y|\leq|\kappa^{\delta_1}|$ for some $\delta_0\leq\delta_1<\lambda$  and a surjection $f\colon X\to Y,$
	and a continuous map $b\colon K\to Y,$ and a continuous surjection $\map{g}{\kappa^\lambda}{X}$ such that $g\circ\eta=f\circ b$, there exists a continuous surjection $\map{h}{\kappa^\lambda}{Y}$ such that
	\[f\circ h=g \text{ and } h\circ \eta=b.\] 
	\amor
	$$\xymatrix@R=2.5pc@C=2.9pc{
		K\ar[d]_{b}\ar@{^(->}[r]^{\eta}&\kappa^\lambda\ar@{..>>}[dl]_(.49){h}\ar@{->>}[d]^{g}\\
		Y\ar@{->>}[r]_{f}&X}$$
	
	There is an increasing sequence $\{\gamma_\alpha:\alpha<\lambda\}\subs\lambda$ 
	such that $\bigcup \{\gamma_\alpha:\alpha<\lambda\}=\lambda$ and $\kappa^\lambda$ 
	is an inverse limit of $\{X_\alpha;\pi_\alpha^\beta;\alpha<\beta<\lambda\},$
	where each $X_\beta$ is a discrete space of cardinality $\kappa^{\gamma_\beta}$ 
	and each $\map{\pi_\alpha^\beta}{X_\beta}{X_\alpha}$ is a surjection such that
	$|(\pi_\alpha^\beta)^{-1}(x)|=|\kappa^{\gamma_\beta}|$ for every $x\in X_\alpha$ then a family
	$\Bee=\{(\pi_\beta)^{-1}(x)\colon x\in X_\beta,\; \beta<\lambda\}$ is a base for $\kappa^\lambda.$ 
	Since $g\circ\eta=f\circ b$ for each $y\in Y$ we have
	\[\eta[b^{-1}(y)]\subs g^{-1}(f(y))\]
	
	The family $\{g^{-1}(f(y))\colon y\in Y\}$ is a partition of $\kappa^\lambda$. Since $g$ is uniformly
	continuous there is $\alpha<\lambda$ such that $g^{-1}(x)=\bigcup \Bee^\alpha_x$ for each 
	$x\in X,$ where $\Bee^\alpha_x\subs \{(\pi_\alpha)^{-1}(a)\colon a\in X_\alpha\}.$ By a similar argument, since $b$ is uniformly
	continuous and  $\eta$ is a uniform embedding, we get $\gamma<\lambda$ such that for each $y\in Y$ with $b^{-1}(y)\neq\emptyset$ 
	there is a family of clopen set 
	$\Raa^\gamma_y\subs \{(\pi_\gamma)^{-1}(a)\colon a\in X_\gamma\}$ satisfying
	$$\bigcup\Raa^\gamma_y\cap \eta[K]=\eta[b^{-1}(y)]$$ 
	and $\bigcup\Raa^\gamma_y\cap \bigcup\Raa^\gamma_{y'}=\emptyset$ 
	for any distinct $y,y'\in Y$. If  $b^{-1}(y)=\emptyset$ then put $\Raa^\gamma_y=\emptyset$.
	Since $\eta[K]$ is uniformly nowhere dense there are $\beta>\alpha,\gamma$ 
	and    families $\Aaa^\beta_y,\Raa^\beta_y\subs \{(\pi_\beta)^{-1}(a)\colon a\in X_\beta\}$ for each $y\in Y$   such that $\gamma_\beta>\delta_1$ and 
	\begin{itemize}
		\item $\bigcup\bigcup\{\Aaa^\beta_y\cup\Raa^\beta_y\colon f(y)=x,\; y\in Y\}=g^{-1}(x),$ for all $x\in X$,
		\item $\bigcup\Aaa^\beta_y\cap\bigcup\Raa^\beta_y=\emptyset,$
		\item $\bigcup\Aaa^\beta_y\cap\eta[K]=\emptyset,$
		\item $|\Aaa^\beta_y|=|\kappa^{\gamma_\beta}|,$ and $\Raa^\beta_y=\emptyset$ for all $y\in Y$ such that $b^{-1}(y)=\emptyset$,
		\item $\bigcup\Raa^\beta_y\cap \eta[K]=\bigcup\Raa^\gamma_y\cap \eta[K],$
		\item $\bigcup(\Aaa^\beta_y\cup\Raa^\beta_y)\cap \bigcup(\Aaa^\beta_{y'}\cup\Raa^\beta_{y'})=\emptyset$ for any distinct $y,y'\in Y$.
	\end{itemize}
	
	We shall define a map $h$ on each clopen set $g^{-1}(x).$ Fix $x\in X.$ If $y\in f^{-1}(x)$ we put $h(a)=y$ 
	for all $a\in\bigcup\Raa^\beta_y\cup \bigcup \Aaa^\beta_y.$ 
	
	This finishes the proof.
\end{pf}

\begin{corollary} Every uniform homeomorphism of uniformly nowhere dense sets of weight $\kappa$ 
in  $\kappa^\lambda$ 
	can be extended to a uniform auto-homeomorphism of  $\kappa^\lambda$.
\end{corollary}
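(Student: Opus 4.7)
The plan is to mirror the proof of the earlier corollary on auto-homeomorphisms of $\omega^*$, substituting Theorem~\ref{generic-ult-la} for Theorem~\ref{generic} and working inside the pair of categories $\mathfrak{D}_K \subs \mathfrak{M}_K$. Both inclusions of the two uniformly nowhere dense sets into $\kappa^\lambda$ will be recognized as $\mathfrak{D}_K$-generic objects, and then uniqueness of the generic object in the \fra framework will provide the required extension.

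More concretely, let $\map{h}{K}{L}$ be the given uniform homeomorphism, with $K,L\subs \kappa^\lambda$ uniformly nowhere dense of weight $\kappa$, and denote by $\map{\eta}{K}{\kappa^\lambda}$ and $\map{\eta_1}{L}{\kappa^\lambda}$ the inclusion maps. Both are uniform embeddings whose images are uniformly nowhere dense of weight $\kappa$, so by Theorem~\ref{generic-ult-la} the object $\eta$ is $\mathfrak{D}_K$-generic. The composition $\eta_1\circ h\colon K\to \kappa^\lambda$ is again a uniform embedding with the same image $L$, so it too satisfies the hypotheses of Theorem~\ref{generic-ult-la} and is $\mathfrak{D}_K$-generic. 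By the uniqueness of generic objects (see~\cite[Theorem~4.6]{KubFra}), there exists an $\mathfrak{M}_K$-iso\-mor\-phism $\map{H}{\kappa^\lambda}{\kappa^\lambda}$ from $\eta$ to $\eta_1\circ h$. Unpacking the definition of an $\mathfrak{M}_K$-arrow, $H$ is a uniform auto-homeomorphism of $\kappa^\lambda$ satisfying $H\circ\eta=\eta_1\circ h$, which is precisely the statement that $H$ extends $h$.

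The only point where a little care is needed is to confirm that the uniqueness machinery actually applies here: one has to verify that $\mathfrak{D}_K\subs\mathfrak{M}_K$ satisfies the compatibility conditions (L0)--(L3), exactly as was done implicitly in Section~\ref{sekcja:2} for $\mathfrak{C}_K\subs\mathfrak{L}_K$. Conditions (L0)--(L2) are straightforward from Theorem~\ref{th:4.1} (every $(\lambda,\kappa)$-bounded spherically complete space is a limit of a $\lambda$-sequence of discrete spaces, and uniformly continuous surjections are epi in $\mathfrak{M}_K$), and (L3) follows from uniform continuity of an arrow out of the limit, which forces it to factor through some $U_\alpha$. This routine verification is the only real ingredient beyond invoking Theorem~\ref{generic-ult-la} and the abstract uniqueness theorem; I do not anticipate any further obstacle.
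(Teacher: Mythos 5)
Your proposal is correct and follows essentially the same route as the paper: recognize both $\eta$ and $\eta_1\circ h$ as generic objects via Theorem~\ref{generic-ult-la} and invoke the uniqueness theorem from~\cite{KubFra} to obtain the extending isomorphism $H$ with $H\circ\eta=\eta_1\circ h$. Your additional remark about checking (L0)--(L3) for $\mathfrak{D}_K\subs\mathfrak{M}_K$ is a reasonable point of care that the paper leaves implicit.
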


\begin{pf}
	Let $h\colon K\to L$ be a uniform homeomorphism between uniformly nowhere dense sets in  $\kappa^\lambda$. Denote by $\eta\colon K\hookrightarrow \kappa^\lambda$ 
	and $\eta_1\colon L\hookrightarrow \kappa^\lambda$ the inclusion mappings. By Theorem \ref{generic-ult-la}, both $\eta$ and $\eta_1\circ h$ are $\mathfrak{M}_K$-generic, therefore by uniqueness (\cite[Theorem 4.6]{KubFra}) there
	exists an $\mathfrak{M}_K$-isomorphism $H \colon  \kappa^\lambda\to\kappa^\lambda$ from $\eta$ to $\eta_1\circ h$. This means that there exists a uniform auto-homeomorphism of $\kappa^\lambda$ which extends the uniform homeomorphism $h$.
\end{pf}

\begin{corollary}Every uniformly nowhere dense set of weight $\kappa$ 
in  $\kappa^\lambda$ is a uniform retract of 	$\kappa^\lambda$.
\end{corollary}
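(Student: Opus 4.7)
The plan is to mirror the proof of the analogous corollary for $\omega^*$ given earlier in the paper. Let $\eta\colon K\hookrightarrow \kappa^\lambda$ denote the inclusion mapping of the given uniformly nowhere dense set $K$ of weight $\kappa$. First, using Theorem~\ref{th:4.1} together with the ultrametric structure of $\kappa^\lambda$, I would represent $\eta$ as the limit of an inverse sequence $\vec{u}=(\phi_\alpha\colon\alpha<\lambda)$ in the category $\mathfrak{D}_K$, where each $\phi_\alpha\colon K\to U_\alpha$ arises from the ball partitions of $K$ induced by the canonical projections of $\kappa^\lambda$ onto its $\alpha$-th level.

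Next, Theorem~\ref{generic-ult-la} gives that $\eta$ is $\mathfrak{D}_K$-generic. Since $\mathfrak{D}_K$ sits in $\mathfrak{M}_K$ satisfying {\rm(L0)}--{\rm(L3)}, Theorem~\ref{gen-fra} then yields that $\vec{u}$ is a \fra $\lambda$-sequence in $\mathfrak{D}_K$. At this point I can invoke Theorem~\ref{fra-sequ-ult-la}(2) to obtain a map $r\colon\kappa^\lambda=\lim\vec{u}\to K$ satisfying $r\circ\eta=\ide_K$, which is exactly a retraction of $\kappa^\lambda$ onto $K$.

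The main obstacle I anticipate is verifying that the retraction so constructed is genuinely \emph{uniformly} continuous, and not just continuous. This follows from the construction of $r$ via Theorem~\ref{uni}, which realises it as the limit of an arrow of inverse sequences whose components are $\mathfrak{D}_K$-arrows, i.e. uniformly continuous surjections between discrete spaces. Consequently, for each $\alpha<\lambda$ the retraction $r$ factors through a map defined on the level-$\alpha$ quotient of $\kappa^\lambda$, so that $r^{-1}[B_\alpha(a)]$ is a union of balls of radius $\alpha$ in $\kappa^\lambda$ for every $a\in K$. Because the $\lambda$-ultrametric on $\kappa^\lambda$ is defined levelwise, this is precisely the uniform continuity condition, and $r$ is the desired uniform retraction.
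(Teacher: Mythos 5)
Your proposal follows exactly the same route as the paper's own proof: represent the inclusion $\eta\colon K\hookrightarrow\kappa^\lambda$ as the limit of a sequence in $\mathfrak{D}_K$, apply Theorem~\ref{generic-ult-la} to get genericity, Theorem~\ref{gen-fra} to conclude the sequence is \fra, and Theorem~\ref{fra-sequ-ult-la}(2) to produce the retraction. Your closing paragraph merely makes explicit the uniform continuity of $r$ (via its levelwise factorization coming from Theorem~\ref{uni}), a detail the paper leaves implicit; the argument is correct.
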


\begin{pf}
	Let $\eta\colon K\hookrightarrow \kappa^\lambda$ denote the inclusion mapping of a uniformly nowhere dense set $K\subs \kappa^\lambda.$ We can represent $\eta\colon  K\hookrightarrow \kappa^\lambda$ as the limit of a sequence $\vec{u}=\{\phi_\alpha\colon \alpha<\lambda\}$ in the category $\mathfrak{D}_K$, where $\phi_\alpha\colon K\to U_\alpha.$ By Theorem \ref{generic-ult-la}, $\eta\colon  K\hookrightarrow \kappa^\lambda$ is $\mathfrak{D}_K$-generic, therefore by Theorem \ref{gen-fra}, $\vec{u}$ is a \fra sequence.  According to Theorem \ref{fra-sequ-ult-la}(2) there is a uniform	retraction $r\colon \kappa^\lambda\to K$. 
\end{pf}

\section{The category of $\kappa$-compact  ultrametric spaces}\label{sekcja:4}

Let $\kappa$ be an infinite cardinal number. A topological space $X$ is $\kappa$\textit{-compact}
if every open cover of $X$ has a subcover of size strictly less than $\kappa$. We say that $X$ is $\kappa$\textit{-compact  ultrametric} space whenever $X$ is   $\kappa$-ultrametric space and $\kappa$-compact.
Note that if  $X$ is $\kappa$-compact  ultrametric space then $|\{B_\alpha(a)\colon a\in K\}|<\kappa$ for every $\alpha<\kappa.$

 A cardinal $\kappa$ is \textit{weakly compact }if it is uncountable and has the Ramsey property $\kappa\to(\kappa)^2$, i.e., if $f\colon [\kappa]^2\to\{0,1\}$ then there are $i\in\{0,1\}$ and $A\in[\kappa]^\kappa$ such that $f\restriction A=\{i\}$ (see \cite{Jech}). 
 If $\kappa$ is not  weakly compact cardinal then $\kappa^\kappa$ is homeomorphic to $2^\kappa$ (see \cite[Corollary~3.4]{HN}). In the case that $\kappa$ is weakly compact $\kappa^\kappa$ is still not $\kappa$-compact but $2^\kappa$ becomes $\kappa$-compact.  Then $\kappa^{<\kappa}=\kappa$, see \cite{Jech}.  Recall some facts about $2^\kappa$.

\begin{theorem}[{\cite[Corollary~3.10]{Nyi}}]
	Let $\kappa$ be a weakly compact cardinal. A space is homeomorphic to the $\kappa$-ultrametric $2^\kappa$ if and only if it is $\kappa$-compact with no isolated points and admits a compatible spherically complete $\kappa$-ultrametric.
\end{theorem}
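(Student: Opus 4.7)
The forward direction is a verification. Equip $2^\kappa$ with its canonical $\kappa$-ultrametric $u(a,b)=\sup\{\alpha<\kappa : a\rest\alpha=b\rest\alpha\}$. Then $2^\kappa$ is $\kappa$-compact exactly because $\kappa$ is weakly compact (as recalled just before the statement), it has no isolated points since every closed ball $B_\alpha(x)$ with $\alpha<\kappa$ contains $2^\kappa$ branches, and it is spherically complete because $\kappa$-compactness turns any chain of nonempty closed balls into a family of closed sets with the $<\kappa$-intersection property.

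For the converse, assume $(X,u)$ is $\kappa$-compact with no isolated points and that $u$ is a compatible spherically complete $\kappa$-ultrametric. I would first apply Theorem~\ref{th:4.1} with $\lambda=\kappa$ to write
\[
X=\liminv\{X_\alpha,q_\alpha^\beta : \alpha\loe\beta<\kappa\},
\]
where each $X_\alpha=\{B_\alpha(a):a\in X\}$ is discrete and $q_\alpha^\beta$ is the inclusion-reversing surjection. Since $X$ is $\kappa$-compact, $|X_\alpha|<\kappa$ at every level, and since $\kappa$ is weakly compact we have $\kappa^{<\kappa}=\kappa$, so the cardinal bounds in Theorem~\ref{th:4.1} are comfortably satisfied. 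Do the same for $2^\kappa$, obtaining $2^\kappa=\liminv\{Y_\alpha,p_\alpha^\beta\}$ with $|Y_\alpha|\le 2^{|\alpha|}<\kappa$. The no-isolated-points hypothesis translates into the statement that for every $\alpha<\kappa$ and every $B\in X_\alpha$ there is $\beta>\alpha$ with $|(q_\alpha^\beta)^{-1}(B)|\ge 2$, and the analogous splitting holds automatically in $2^\kappa$.

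Next I would build a homeomorphism $X\to 2^\kappa$ by a transfinite back-and-forth construction of length $\kappa$. At stage $\alpha$, after possibly passing to cofinal refinements of both sequences, produce a bijection between the current levels compatible with the bonding maps constructed so far. Successor steps are handled by using the splitting on each side to pair up fibers; limit steps of cofinality $<\kappa$ use the continuity of both inverse sequences (part of Theorem~\ref{th:4.1}) to pass to the limit level. The main obstacle is precisely the limit-stage coherence: one must check that no branch has been lost on either side. This is exactly where spherical completeness (equivalently, $\kappa$-compactness) is decisive, since every thread through either inverse system is realised as a point of the limit, so a level-wise bijection that is continuous at limits automatically induces a homeomorphism of the two limit spaces. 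Weak compactness of $\kappa$ enters twice: it makes $2^\kappa$ itself $\kappa$-compact, which is what makes it the correct target, and it supplies $\kappa^{<\kappa}=\kappa$, keeping all level cardinals strictly below $\kappa$ throughout the $\kappa$ stages.
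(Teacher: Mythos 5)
First, a remark on the comparison you asked for: the paper gives no proof of this statement at all --- it is quoted, with attribution, from Nyikos \cite[Corollary~3.10]{Nyi} --- so there is no internal argument to measure yours against; I can only assess your sketch on its own terms.

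Your forward direction is fine (though spherical completeness of $2^\kappa$ is more easily seen directly, by taking the union of the coherent partial functions determined by a chain of balls, than via $\kappa$-compactness). The converse has the right architecture --- represent both spaces as continuous inverse limits of discrete levels of size $<\kappa$ via Theorem~\ref{th:4.1} and build a compatible system of level bijections --- but the successor step conceals the entire technical content of the theorem, and as written it does not go through. Between levels $\alpha<\beta$ of $2^\kappa$ every fiber of the bonding map has exactly $2^{|[\alpha,\beta)|}$ elements, whereas a fiber of $q_\alpha^\beta$ on the $X$ side can have any nonzero cardinality below $\kappa$; one cannot ``use the splitting on each side to pair up fibers'' because the fiber cardinalities simply do not match. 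Two ingredients are missing. (i) A splitting lemma: for every ball $B$ of $X$ and every cardinal $\nu<\kappa$ there is a level at which $B$ contains at least $\nu$ pairwise disjoint nonempty sub-balls. This is not a one-step consequence of having no isolated points: for infinite $\nu$ it requires iterating the binary splitting transfinitely and invoking spherical completeness (or $\kappa$-compactness) at the limit stages to see that all $2^{|\delta|}$ branches of the resulting tree survive as nonempty balls; this, together with inaccessibility of $\kappa$ keeping these counts below $\kappa$, is where the hypotheses really bite. (ii) A regrouping step: since the target fiber sizes $2^{|\delta|}$ are rigid, one must replace the canonical ball-partitions $X_\alpha$ by coherently chosen clopen unions of balls so that each piece splits into \emph{exactly} the required number of sub-pieces, and then check separately that these modified partitions still generate the topology of $X$. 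Your remarks about limit stages and about every thread being realized are correct and do isolate one genuine use of spherical completeness, but without (i) and (ii) the construction cannot get past the first infinite stage.
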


\begin{theorem}[\cite{MonScot}]\label{th:15}
	A cardinal number $\kappa$  is weakly compact if and only if the $\kappa$-ultrametric $2^\kappa$ is $\kappa$-compact.
\end{theorem}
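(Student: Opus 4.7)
The plan is to prove both implications by appealing to the tree-property characterization of weak compactness: $\kappa$ is weakly compact if and only if it is inaccessible and every tree of height $\kappa$ whose levels all have cardinality $<\kappa$ admits a cofinal branch. I will use that the basic clopen sets of the $\kappa$-ultrametric $2^\kappa$ are the cylinders $[s]:=\{x\in 2^\kappa:s\subseteq x\}$ indexed by $s\in 2^{<\kappa}$, and similarly for $\kappa^\kappa$.

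For the forward direction, assume $\kappa$ is weakly compact and let $\mathcal{U}$ be any open cover of $2^\kappa$. After refining, I may assume $\mathcal{U}=\{[s]:s\in S\}$ for some $S\subseteq 2^{<\kappa}$, so that the covering condition becomes the combinatorial statement that every $x\in 2^\kappa$ has some initial segment in $S$. Supposing for contradiction that no subfamily of $\mathcal{U}$ of cardinality $<\kappa$ covers $2^\kappa$, form the downward-closed subtree
\[ T=\{s\in 2^{<\kappa}:\text{no initial segment of } s \text{ belongs to } S\}. \]
Inaccessibility gives $|T\cap 2^\alpha|\le 2^\alpha<\kappa$ for each $\alpha<\kappa$. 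The crux is to verify that $T$ reaches level $\alpha$ for every $\alpha<\kappa$: otherwise, picking an initial segment in $S$ for each of the $2^\alpha$ strings of length $\alpha$ would produce a subcover of cardinality $\le 2^\alpha<\kappa$, a contradiction. The tree property then supplies a cofinal branch $x\in 2^\kappa$ through $T$; no initial segment of $x$ lies in $S$, contradicting that $\mathcal{U}$ covers $x$.

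For the backward direction, I would argue by contraposition. If $\kappa$ is not weakly compact, the previously cited result of Halko--Nyikos (\cite{HN}, Corollary~3.4) yields a homeomorphism $\kappa^\kappa\cong 2^\kappa$ of $\kappa$-ultrametric spaces, so it suffices to show that $\kappa^\kappa$ fails to be $\kappa$-compact. Partition $\kappa^\kappa$ according to the value at the first coordinate: the family $\{[\langle\alpha\rangle]:\alpha<\kappa\}$ of basic clopens determined by $f(0)=\alpha$ is a pairwise disjoint clopen cover of $\kappa^\kappa$ of cardinality exactly $\kappa$, so no proper subfamily is a cover, and in particular no subcover has size $<\kappa$.

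The main obstacle is the construction of $T$ in the forward direction: one must simultaneously ensure that its levels stay below $\kappa$ in size (where inaccessibility is essential) and that failure of $T$ to reach some level below $\kappa$ already produces a small subcover. Once these two points are established, invoking the tree property finishes the argument immediately, and the backward direction is then a brief explicit construction contingent only on the cited homeomorphism.
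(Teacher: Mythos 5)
The paper offers no proof of this statement at all --- it is imported verbatim from Monk and Scott \cite{MonScot} --- so there is no internal argument to measure yours against; I can only judge your proof on its own terms, and it is correct. Your forward direction is the classical tree-property argument, and the details check out: the refinement to cylinders is legitimate because the closed balls $B_\alpha(x)$ of the $\kappa$-ultrametric on $2^\kappa$ are exactly the cylinders $[x\restriction\alpha]$; your $T$ is downward closed; inaccessibility bounds level $\alpha$ by $2^{|\alpha|}<\kappa$; the step showing $T$ meets every level is precisely where the absence of a small subcover enters; and the cofinal branch produced by the tree property is an uncovered point. For the converse you rely on \cite[Corollary~3.4]{HN} plus the observation that the clopen partition $\{[\langle\alpha\rangle]:\alpha<\kappa\}$ of $\kappa^\kappa$ admits no proper subcover; this is sound and consistent with how the paper itself uses that corollary, though it makes your proof contingent on an external homeomorphism. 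If you want the converse self-contained, the standard route is to extract weak compactness directly from $\kappa$-compactness of $2^\kappa$: the partition of $2^\kappa$ into the $2^{|\alpha|}$ cylinders of length $\alpha$ has no proper subcover, forcing $2^{|\alpha|}<\kappa$, and a downward-closed $\kappa$-tree $T\subseteq 2^{<\kappa}$ with small levels and no cofinal branch yields, via the minimal nodes outside $T$, a clopen cover with no subcover of size $<\kappa$. Finally, one caveat that you inherit from the statement itself rather than introduce: for $\kappa=\omega$ the space $2^\omega$ is compact while $\omega$ is not weakly compact, and the quoted form of \cite[Corollary~3.4]{HN} likewise fails there, so both the theorem and your proof tacitly assume $\kappa$ is uncountable (and regular); it would be worth saying so explicitly.
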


\begin{proposition}\label{prop:16}
	Assume that $\kappa$ is a weakly compact cardinal. Then every nowhere dense subset of  the $\kappa$-ultrametric space $2^\kappa$ is a uniformly nowhere dense subset. 
\end{proposition}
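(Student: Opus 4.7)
The plan is to use the fact that weakly compact cardinals are (strongly) inaccessible and regular, so that one can take a supremum over the set of balls of a given radius and stay below $\kappa$.

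Fix a nowhere dense set $A\subs 2^\kappa$ and fix $\alpha<\kappa$. Because balls of fixed radius in an ultrametric are either disjoint or equal, the family $\Pee_\alpha=\{B_\alpha(b):b\in 2^\kappa\}$ is a partition of $2^\kappa$ into clopen sets, and its cardinality is $|2^\alpha|$. Since $\kappa$ is weakly compact, $\kappa$ is strongly inaccessible, so $|2^\alpha|<\kappa$. For each cell $B_\alpha(b)\in\Pee_\alpha$, the set $B_\alpha(b)$ is a nonempty open subset of $2^\kappa$, and $A$ is nowhere dense, so $B_\alpha(b)\not\subs\cl A$. Pick $y_b\in B_\alpha(b)\setminus\cl A$ and an ordinal $\beta_b$ with $\alpha<\beta_b<\kappa$ such that $B_{\beta_b}(y_b)\cap A=\emptyset$; such a $\beta_b$ exists because closed balls form a neighborhood base at $y_b$ and $2^\kappa\setminus\cl A$ is open.

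Now set $\beta=\sup\{\beta_b:B_\alpha(b)\in\Pee_\alpha\}$. Since $\kappa$ is weakly compact, it is regular, and we are taking the supremum of fewer than $\kappa$ ordinals each below $\kappa$, so $\beta<\kappa$. For every $b\in 2^\kappa$ we have $\beta\ge\beta_b$, hence
\[
B_\beta(y_b)\;\subs\;B_{\beta_b}(y_b)\;\subs\;B_\alpha(y_b)\;=\;B_\alpha(b),
\]
and $B_\beta(y_b)\cap A\subs B_{\beta_b}(y_b)\cap A=\emptyset$. Thus for every $b\in 2^\kappa$, the set $\{B_\beta(a):B_\beta(a)\cap A=\emptyset\;\&\;a\in B_\alpha(b)\}$ contains $B_\beta(y_b)$ and is therefore nonempty. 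Since $\alpha<\kappa$ was arbitrary, $A$ is uniformly nowhere dense.

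The whole argument is short; the only step requiring care is verifying that both the inaccessibility bound $2^\alpha<\kappa$ and the regularity of $\kappa$ are simultaneously available, which is precisely what weak compactness supplies. If one instead only had $\kappa$ regular (without inaccessibility), the partition $\Pee_\alpha$ could have cardinality $\ge\kappa$ and the supremum step would fail, so this is where the hypothesis on $\kappa$ is genuinely used.
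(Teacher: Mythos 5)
Your proof is correct and follows essentially the same scheme as the paper's: bound the number of balls of radius $\alpha$ by a cardinal below $\kappa$, choose inside each one a smaller ball missing $A$, and use regularity of $\kappa$ to take a supremum of the chosen radii. The only point of divergence is the justification of the bound $|\{B_\alpha(a)\colon a\in 2^\kappa\}|<\kappa$: the paper invokes the $\kappa$-compactness of $2^\kappa$ (Theorem~\ref{th:15}, Monk--Scott) and extracts a small subcover, whereas you count the balls directly as at most $2^{|\alpha|}$ and appeal to the strong inaccessibility of a weakly compact cardinal. Both are legitimate consequences of weak compactness; your counting argument is slightly more elementary for the specific space $2^\kappa$, while the paper's version of the step is the one that would survive verbatim for an arbitrary $\kappa$-compact ultrametric space in place of $2^\kappa$.
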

\begin{proof}
	Let $u\colon 2^\kappa\times 2^\kappa\to\kappa+1$ be an ultrametric  and  $A\subs 2^\kappa$ be a nowhere dense set and   $\alpha<\kappa.$ The family  $\Raa=\{B_\alpha(a)\colon a\in 2^\kappa\}$ is a cover of $2^\kappa$. By Theorem \ref{th:15} $2^\kappa$ is $\kappa$-compact, so  $|\Raa|<\kappa.$ 
For each $V\in\Raa$ there exist $\beta_V<\kappa$ and  a set $B_V\in \{B_{\beta_V}(x)\colon x\in 2^\kappa\}$ such that 
$B_V\subset V$ and $B_V\cap A=\emptyset.$ Let $\beta=\max\{\beta_V\colon V\in\Raa\}.$ 
Since $|\Raa|<\kappa$ and $\kappa$ is regular cardinal we get $\beta<\kappa$. Therefore we get
$$\{B_\beta(a) \colon B_\beta(a)\cap A=\emptyset\; , a\in 2^\kappa,a\restriction\alpha=b\restriction\alpha\}\ne\emptyset$$ for every $b\in 2^\kappa.$
\end{proof}

\begin{proposition}\label{prop:17}
Assume that $(X,u)$	is  a $\kappa$-compact   ultrametric space and  $(Y,d)$ is a $\tau$-ultrametric space, where $\tau$ is an arbitrary ordinal. If $\kappa$ is a regular cardinal then every continuous map $f\colon X\to Y$  is uniformly continuous. 
\end{proposition}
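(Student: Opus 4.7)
My plan is to mimic the standard argument that a continuous function on a compact metric space is automatically uniformly continuous, adapted to the $\kappa$-compact/ultrametric setting. Fix $\epsilon<\tau$. For each $x\in X$ the continuity of $f$ at $x$, together with the fact that the closed balls $B_\epsilon(f(x))$ form a base at $f(x)$ in $Y$, gives me an ordinal $\delta_x<\kappa$ such that
\[ f[B_{\delta_x}(x)]\subs B_\epsilon(f(x)).\]

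Next I use $\kappa$-compactness: the family $\{B_{\delta_x}(x):x\in X\}$ is an open cover of $X$, so it admits a subcover $\{B_{\delta_{x_i}}(x_i):i\in I\}$ with $|I|<\kappa$. Since $\kappa$ is regular and each $\delta_{x_i}<\kappa$, the ordinal $\delta:=\sup_{i\in I}\delta_{x_i}$ is still strictly below $\kappa$, and I will show that this $\delta$ witnesses uniform continuity.

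To verify the claim, let $a,b\in X$ with $u(a,b)\geq\delta$ and pick $i\in I$ with $a\in B_{\delta_{x_i}}(x_i)$. The key ultrametric fact is that any point of a ball is also its centre, so $B_{\delta_{x_i}}(a)=B_{\delta_{x_i}}(x_i)$, and since $\delta\geq\delta_{x_i}$ we have $B_\delta(a)\subs B_{\delta_{x_i}}(a)$; hence $b\in B_{\delta_{x_i}}(x_i)$ as well. Both images therefore lie in $B_\epsilon(f(x_i))$, and a single application of the ultrametric triangle law (U2) yields $d(f(a),f(b))\geq\epsilon$, as required.

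The only point that uses anything beyond routine ball arithmetic is the bounding of $\delta$: in the classical compact metric argument one takes a \emph{minimum} over finitely many radii, whereas here the analogue is the \emph{supremum} of fewer than $\kappa$ ordinals, which stays below $\kappa$ precisely by the regularity of $\kappa$. I therefore expect this to be the only subtle step; everything else is standard manipulation of ultrametric balls, relying on the fact that any two balls in an ultrametric are either disjoint or nested.
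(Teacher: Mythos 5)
Your proof is correct and follows essentially the same route as the paper's: cover $X$ by balls on which $f$ maps into an $\epsilon$-ball, extract a subcover of size $<\kappa$ by $\kappa$-compactness, and use regularity of $\kappa$ to bound the supremum of the radii by some $\delta<\kappa$. In fact you spell out the final verification (via the fact that every point of an ultrametric ball is a centre, plus one application of (U2)) more explicitly than the paper does, which leaves that step largely implicit.
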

\begin{proof}
Fix $\beta<\tau$.	Let $\Raa_1\subs\{B^u_\alpha(a)\colon a\in X,\; \alpha<\kappa\}$ be a cover of $X$ which  is a refinement of $\{f^{-1}(B^d_\beta(f(x)))\colon x\in X\}$, i.e.  for each $G\in \Raa_1$ there is $x\in X$ with $G\subs f^{-1}(B^d_\beta(f(x)))$. Since $X$ is $\kappa$-compact there is a subcover $\Raa_2\subs\Raa_1$ of size $\lambda<\kappa.$ Let $\Raa_2=\{B^u_\alpha(a)\colon a\in A, \;\alpha\in C\}$, where 
$A\subs X$ and $C\subs \kappa$ are subsets of cardinality strictly less than $\kappa$. Since $\kappa$ is a regular cardinal, the ordinal $\gamma=\sup C$ is strictly less than $\kappa$. Therefore there is a refinement $\Raa\subs \{B^u_\alpha(a)\colon a\in X,\; \alpha<\kappa\}$ of $\Raa_1$ which  is a cover of $X$, this means that  $f$ is uniformly continuous.
\end{proof}

\textbf{In the rest of this section we assume that  $\kappa$ is a weakly compact cardinal.}

 Fix a $\kappa$-compact   ultrametric space $(K,u)$ of weight not greater than
 $\kappa$, where  $u\colon K\times K\to \kappa+1.$ We define categories $\mathfrak{M}^w_K$, $\mathfrak{D}^w_K$, and a family of arrows $\mathcal{F}^w_K$ 
 as follows. 
 
 The objects of $\mathfrak{M}^w_K$ are continuous mappings $f\colon  K\to X$, where $(X,d)$ is a  $\kappa$-ultrametric space of weight 	not greater than $\kappa$. Given two $\mathfrak{M}^w_K$-objects 
 $f_0\colon  K \to X_0$, $f_1 \colon  K \to X_1$, a $\mathfrak{M}^w_K$-arrow from $f_1$ to $f_0$ is
 a continuous surjection $q \colon  X_1\twoheadrightarrow X_0.$ The composition in $\mathfrak{M}^w_K$ is the usual composition of mappings.  
 
 We define $\mathfrak{D}^w_K$ to be the full
 subcategory of $\mathfrak{M}^w_K$ whose objects are those $f \colon  K \to X$ where $X$ is a discrete spaces of cardinality	$<\kappa$.
 
 We define a family of arrows $\mathcal{F}^w_K$   similarly to Section~\ref{sekcja:3}. 
 For each ordinal $\alpha<\kappa$ and each cardinal $\xi<\kappa$  such that  $|\{B_\alpha(a)\colon a\in K\}|\leq\xi$ we define the discrete space
 $$X^\xi_\alpha = \{B_\alpha(a)\colon a\in K\} \oplus \xi$$
 and define $f^\xi_\alpha \colon K \to X_\alpha^\xi$, an object from the category $\mathfrak{D}_K$, such that $(f^\xi_\alpha)^{-1}[B_\alpha(a)] = B_\alpha(a)$ for every $a \in K$.
 Given cardinals $\xi<\delta<\kappa$, fix a map $r^\delta_\xi\colon \delta\to\xi\times\{0,1\}$ such that $|(r^\delta_\xi)^{-1}(\alpha,i)|=\delta$ for every
 $(\alpha,i)\in \xi\times\{0,1\}$.
 For each ordinal $\alpha<\kappa$ and each cardinal $\xi<\kappa$  we fix a surjection 
$p^\xi_\alpha\colon \xi\to \{B_\alpha(a)\colon a\in K\},$ where 
 $|\{B_\alpha(a)\colon a\in K\}|\leq\xi$. 
 Given ordinals $\alpha \loe \beta<\kappa$ and cardinals $\xi\loe\delta<\kappa$, we define a  surjection $q^{(\beta,\delta)}_{(\alpha,\xi)} \colon  X^\delta_{\beta}\twoheadrightarrow X^\xi_{\alpha}$ as follows:
 \[q^{(\beta,\delta)}_{(\alpha,\xi)}(x)=\begin{cases}
 	B_{\alpha}(a),& \text{if }x = B_{\beta}(a),\\
 	\pi(r^\delta_\xi(x)),& \text{if }x\in\delta\text{ and }r^\delta_\xi(x)\in\xi\times\{1\},\\
 	p^\xi_{\alpha}(\pi(r^\delta_\xi(x))),& \text{if }x\in\delta\text{ and }r^\delta_\xi(x)\in\xi\times\{0\},
 \end{cases}\]
 where $\pi$ is the canonical projection from $\xi \times \{0,1\}$ onto $\xi$. Let $$\mathcal{F}_K=\left\{q^{(\beta,\delta)}_{(\alpha,\xi)}:\alpha\loe\beta<\kappa,\;\xi\loe\delta<\kappa\right\}.$$
 
 \begin{lemma}\label{l:16}
 	The family of arrows $\mathcal{F}^w_K$ is  dominating in $\mathfrak{D}^w_K$ 
 	and $|\mathcal{F}^w_K| \loe \kappa.$
 \end{lemma}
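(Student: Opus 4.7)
The plan is to mirror the proof of Lemma \ref{l:2}, with two simplifications coming from the weakly-compact hypothesis. First, by Proposition \ref{prop:17} every continuous map from the $\kappa$-compact space $K$ to any $\kappa$-ultrametric space is automatically uniformly continuous, so we never have to add ``uniformly'' by hand. Second, since $K$ is $\kappa$-compact and $\kappa$ is regular, $|\{B_\alpha(a):a\in K\}|<\kappa$ for every $\alpha<\kappa$, which means all of the discrete spaces $X^\xi_\alpha$ genuinely live in $\mathfrak{D}^w_K$ and the bookkeeping cardinals stay below $\kappa$.

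For the cardinality bound: the arrows in $\mathcal{F}^w_K$ are indexed by quadruples $(\alpha,\beta,\xi,\delta)$ with $\alpha\loe\beta<\kappa$ ordinals and $\xi\loe\delta<\kappa$ cardinals, so at most $\kappa$ quadruples; the auxiliary maps $r^\delta_\xi$ and $p^\xi_\alpha$ are fixed once and for all. Using $\kappa^{<\kappa}=\kappa$ one concludes $|\mathcal{F}^w_K|\loe\kappa$.

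For condition (i) of being dominating: given an object $f\colon K\to Z$ of $\mathfrak{D}^w_K$ with $|Z|<\kappa$, apply Proposition \ref{prop:17} to get an ordinal $\alpha<\kappa$ such that each fibre $f^{-1}(z)$ is a union of balls $B_\alpha(a)$. Write $Z=f[K]\cup A$ with $A$ disjoint from $f[K]$ and $|A|<\kappa$. Pick a cardinal $\xi<\kappa$ dominating both $|A|$ and $|\{B_\alpha(a):a\in K\}|$; this is possible precisely because both are $<\kappa$ by $\kappa$-compactness and regularity of $\kappa$. Define $q\colon X^\xi_\alpha\to Z$ by $q(B_\alpha(a))=f(a)$ on the first summand and any surjection from the copy of $\xi$ onto $A$ on the second. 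Then $q\circ f^\xi_\alpha=f$, so $q$ is a $\mathfrak{D}^w_K$-morphism from $f^\xi_\alpha$ to $f$, and (i) holds.

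For condition (ii): given $f^\xi_\alpha\in\Cod(\mathcal{F}^w_K)$ and a $\mathfrak{D}^w_K$-arrow $f\colon Y\to X^\xi_\alpha$ with $f\circ h=f^\xi_\alpha$ for an object $h\colon K\to Y$, we need $g\colon X^\delta_\beta\to Y$ with $f\circ g=q^{(\beta,\delta)}_{(\alpha,\xi)}$. Write $Y\setminus h[K]$ as a disjoint union $A_0\cup A_1$ where $A_0=f^{-1}(f^\xi_\alpha[K])\cap(Y\setminus h[K])$. By Proposition \ref{prop:17}, $h$ is uniformly continuous, so pick an ordinal $\beta$ with $\alpha<\beta<\kappa$ such that each fibre of $h$ is a union of balls of radius $\beta$. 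Choose a cardinal $\delta$ with $\xi\loe\delta<\kappa$, $\delta\geq|Y\setminus h[K]|$ and $\delta\geq|\{B_\beta(a):a\in K\}|$. Then, exactly as in the proof of Lemma \ref{l:2}: on $f^\delta_\beta[K]$ put $g(f^\delta_\beta(y))=h(y)$; for each $z\in f^\xi_\alpha[K]$ partition $(r^\delta_\xi)^{-1}[(p^\xi_\alpha)^{-1}(z)\times\{0\}]$ into two pieces of size $\delta$ and use surjections onto $f^{-1}(z)\cap h[K]$ and $f^{-1}(z)\cap A_0$ respectively; for each $z\in\xi$ surject $(r^\delta_\xi)^{-1}(z,1)$ onto $f^{-1}(z)\subseteq A_1$. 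The fibre-size property $|(r^\delta_\xi)^{-1}(\alpha,i)|=\delta$ ensures that such surjections exist, and a direct check gives $f\circ g=q^{(\beta,\delta)}_{(\alpha,\xi)}$.

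The principal obstacle is the choice of $\delta$: we must simultaneously dominate $\xi$, $|Y\setminus h[K]|$ and $|\{B_\beta(a):a\in K\}|$ by a single cardinal $<\kappa$. That such $\delta$ exists is exactly where the assumption that $\kappa$ is a weakly compact (hence regular) cardinal and that $K$ is $\kappa$-compact is essential; everything else is a verbatim adaptation of Lemma \ref{l:2}.
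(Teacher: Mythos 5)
Your proposal is correct and follows essentially the same route as the paper, whose proof of this lemma is simply the one-line instruction to combine Lemma~\ref{l:2} with Proposition~\ref{prop:17}; you have merely spelled out the adaptation (uniform continuity for free via $\kappa$-compactness, and the cardinality bookkeeping via regularity of $\kappa$ and $\kappa^{<\kappa}=\kappa$) that the paper leaves implicit.
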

\begin{proof}
	Using Lemma \ref{l:2} and Proposition \ref{prop:17} we obtain the hypothesis of the lemma.
\end{proof}
Using Lemma \ref{l:16} and Propositions \ref{prop:16}, \ref{prop:17}  one can  mimic the considerations of Section~\ref{sekcja:3}, to get  the following.
\begin{theorem}\label{th:17}
	Assume that  $\vec{\phi}=(\phi_\alpha\colon \alpha<\kappa)$  is a continuous \fra $\kappa$-sequence in $\mathfrak{D}^w_K$, where $\phi_\alpha\colon K\to U_\alpha$ for each $\alpha<\kappa$ and $\kappa$  is a weakly compact cardinal.  Then 
	\begin{enumerate}
		\item[{\rm(1)}] $\vec{u}=(U_\alpha\colon \alpha<\kappa)$ is a \fra sequence in the category of  discrete spaces  of cardinality	less than $\kappa$.
		\item[{\rm(2)}] Then the limit map $\phi_{\kappa}\colon K\to \lim \vec{u}$ has a left inverse, i.e., there is $r\colon \lim \vec{u}\to K$
		such that $r\circ \phi_{\kappa}=\ide_K$.
		\item[{\rm(3)}] The image $\phi_{\kappa}[K]\subs U_{\kappa}=\lim \vec{u}$  is  nowhere dense. 
		\end{enumerate}
\end{theorem}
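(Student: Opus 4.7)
The plan is to adapt the proofs of Theorems~\ref{fra-sequ} and~\ref{fra-sequ-ult-la} to the weakly compact setting, with Propositions~\ref{prop:16} and~\ref{prop:17} taking care of the gap between continuous and uniformly continuous maps and between nowhere dense and uniformly nowhere dense sets. The dominating family $\mathcal{F}^w_K$ and Lemma~\ref{l:16} play exactly the role that $\mathcal{F}_K$ and Lemma~\ref{l:2} played in Section~\ref{sekcja:3}, so the combinatorial skeleton of the argument is the same.

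For (1), I verify axioms (U) and (A) for $\vec{u}$ in the category of discrete spaces of cardinality $<\kappa$ with surjections. Property (U) is immediate from the fact that the cardinalities $|U_\alpha|$ are cofinal below $\kappa$ (guaranteed by $\mathcal{F}^w_K$). For (A), given a surjection $f\colon Z\to U_\alpha$ with $Z$ discrete of cardinality $<\kappa$, I pick a point $z_u\in f^{-1}(u)$ for each $u\in U_\alpha$ and define $g\colon K\to Z$ by $g(x)=z_{\phi_\alpha(x)}$; each fibre $g^{-1}(z)$ is the union of the clopen sets $\phi_\alpha^{-1}(u)$ with $z_u=z$ and hence clopen, so $g$ is a $\mathfrak{D}^w_K$-object and $f$ is a $\mathfrak{D}^w_K$-arrow from $g$ to $\phi_\alpha$. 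Applying axiom (A) of $\vec{\phi}$ then yields $\beta\geq\alpha$ and a (necessarily surjective) $\mathfrak{D}^w_K$-arrow $h\colon U_\beta\to Z$ with $u_\alpha^\beta=f\circ h$.

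For (2), I use Theorem~\ref{th:4.1} to express $K$ as an inverse limit of discrete spaces $K_\alpha$ of cardinality $<\kappa$ with surjective bonding maps; because $\kappa$ is weakly compact (hence strongly inaccessible with $\kappa^{<\kappa}=\kappa$), a cofinal refinement can be chosen so that the inverse limits at limit ordinals remain discrete of cardinality $<\kappa$, yielding a continuous sequence of $\mathfrak{D}^w_K$-objects $p_\alpha\colon K\to K_\alpha$ whose limit map is $\ide_K$. Theorem~\ref{uni} then provides an arrow of sequences $F\colon\vec{\phi}\to(p_\alpha)$, and the limit map $r=\lim F\colon U_\kappa\to K$ satisfies $r\circ\phi_\kappa=\ide_K$.

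For (3), I argue by contradiction, as in Theorem~\ref{fra-sequ-ult-la}(3). If some basic clopen set $u_\alpha^{-1}(\{w\})$ were contained in $\phi_\kappa[K]$, I form $Y=U_\alpha\times 2$ (discrete) with first-coordinate projection $p\colon Y\to U_\alpha$, and define $g\colon K\to Y$ by $g(x)=(\phi_\alpha(x),0)$. Then $p$ is a $\mathfrak{D}^w_K$-arrow from $g$ to $\phi_\alpha$, and axiom (A) produces $\beta\geq\alpha$ and a surjection $h\colon U_\beta\to Y$ with $p\circ h=u_\alpha^\beta$ and $h\circ\phi_\beta=g$; the open set $u_\beta^{-1}(h^{-1}(\{(w,1)\}))$ is nonempty (by surjectivity of $h$ and $u_\beta$), contained in $u_\alpha^{-1}(\{w\})$, yet disjoint from $\phi_\kappa[K]$ because $g$ takes values only in $U_\alpha\times\{0\}$, a contradiction. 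The main obstacle I anticipate is the continuity of the target sequence at limit ordinals in~(2): the inverse limit in $\mathfrak{D}^w_K$ of discrete spaces of cardinality $<\kappa$ is not itself discrete in general, so one must either select a cofinal refinement of $(K_\alpha)$ where discreteness is preserved or else transfer the \fra property of $\vec{\phi}$ from $\mathfrak{D}^w_K$ to $\mathfrak{M}^w_K$ via the (L0)--(L3) framework before invoking Theorem~\ref{uni}.
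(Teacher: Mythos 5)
Your proposal is correct and follows essentially the same route as the paper, which gives no separate proof of Theorem~\ref{th:17} but simply instructs the reader to mimic Section~\ref{sekcja:3} (i.e.\ Theorems~\ref{fra-sequ} and~\ref{fra-sequ-ult-la}) using Lemma~\ref{l:16} and Propositions~\ref{prop:16}, \ref{prop:17} --- which is exactly what you do, with the trivial lifting through discrete fibres replacing Theorem~\ref{project} in (1), the inverse-limit representation of $K$ plus Theorem~\ref{uni} in (2), and the two-point fibre-splitting argument in (3). The limit-stage issue you flag in (2) is real but resolves as you suggest: since $K$ is $\kappa$-compact (hence spherically complete) and $\kappa$ is inaccessible, the sequence of ball-spaces $K_\alpha=\{B_\alpha(a):a\in K\}$ is already continuous in $\mathfrak{D}^w_K$, so no refinement is needed.
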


\begin{corollary} Assume that $\kappa$ is a weakly compact cardinal. Then  a  $\kappa$-compact  ultrametric space $(K,u)$ of weight not greater than
$\kappa$ can be  embedded into the $\kappa$-ultrametric space $2^\kappa$ as a nowhere dense subset.
\end{corollary}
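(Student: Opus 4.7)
The plan is to imitate the corollary at the end of Section~\ref{sekcja:3}, replacing $\kappa^\lambda$ by $2^\kappa$ and invoking Nyikos's characterization (\cite[Corollary~3.10]{Nyi}) to identify the \fra limit with $2^\kappa$.

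First, Lemma~\ref{l:16} provides a dominating family of size $\loe\kappa$ in $\mathfrak{D}^w_K$. Together with the amalgamation and directedness of $\mathfrak{D}^w_K$ (established exactly as in Lemma~\ref{l:22}, where Proposition~\ref{prop:17} upgrades continuity to uniform continuity automatically because $K$ is $\kappa$-compact), Theorem~\ref{exists} produces a continuous \fra $\kappa$-sequence $\vec\phi=(\phi_\alpha\colon\alpha<\kappa)$ in $\mathfrak{D}^w_K$ with $\phi_\alpha\colon K\to U_\alpha$. Theorem~\ref{th:17}(1) then says that $\vec u=(U_\alpha\colon\alpha<\kappa)$ is a \fra sequence in the category $\mathfrak{D}^w$ of discrete spaces of cardinality strictly less than $\kappa$, with continuous surjections as arrows.

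Next, I view $\mathfrak{D}^w$ as a full subcategory of the ambient category $\mathfrak{M}^w$ of spherically complete $\kappa$-compact $\kappa$-ultrametric spaces of weight at most $\kappa$ with continuous surjections. Conditions (L0)--(L3) are verified as follows: (L0) is clear; (L1) follows from Theorem~\ref{th:4.1}, noting that the inverse limit of discrete spaces of cardinality $<\kappa$ along a $\kappa$-sequence is spherically complete, of weight $\loe\kappa$, and $\kappa$-compact (using $\kappa^{<\kappa}=\kappa$ and Theorem~\ref{th:15}); (L2) is again Theorem~\ref{th:4.1}; (L3) follows from Proposition~\ref{prop:17} together with the standard fact that a continuous map from a $\kappa$-compact space to a discrete space of size $<\kappa$ factors through a projection of any $\kappa$-indexed inverse system. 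Hence by Theorem~\ref{fra-gen}, $U_\kappa=\lim\vec u$ is $\mathfrak{D}^w$-generic in $\mathfrak{M}^w$.

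A short Fraïssé-type argument shows that $U_\kappa$ has no isolated points: given $\alpha<\kappa$ and $y\in U_\alpha$, take $Y=U_\alpha\times\{0,1\}$ with $\phi(k)=(\phi_\alpha(k),0)$ and projection $p\colon Y\to U_\alpha$; condition (A) supplies $\beta\ge\alpha$ and a surjection $g\colon U_\beta\to Y$ with $p\circ g=u_\alpha^\beta$, so the preimage of $y$ in $U_\beta$, and hence in $U_\kappa$, has at least two points. Thus $U_\kappa$ is spherically complete, $\kappa$-compact, and without isolated points, so by \cite[Corollary~3.10]{Nyi}, $U_\kappa\cong 2^\kappa$. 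Finally, Theorem~\ref{th:17}(2) produces a retraction $r\colon U_\kappa\to K$ with $r\circ\phi_\kappa=\ide_K$, making $\phi_\kappa\colon K\to U_\kappa\cong 2^\kappa$ an embedding, and Theorem~\ref{th:17}(3) gives that $\phi_\kappa[K]$ is nowhere dense. The main obstacle I expect is the verification of (L0)--(L3) for $\mathfrak{D}^w\subseteq\mathfrak{M}^w$, particularly that an inverse limit along a $\kappa$-sequence of small discrete spaces is $\kappa$-compact -- this is precisely where weak compactness enters in an essential way.
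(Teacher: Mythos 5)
Your proposal is correct and follows essentially the same route the paper intends: the paper states this corollary without proof, pointing back to the argument for the analogous corollary in Section~\ref{sekcja:3} (Fra\"iss\'e sequence in $\mathfrak{D}^w_K$, transfer to the category of small discrete spaces via Theorem~\ref{th:17}(1), genericity of the limit via Theorem~\ref{fra-gen}, identification of the limit with $2^\kappa$, then parts (2) and (3) of Theorem~\ref{th:17}). Your explicit verification that $U_\kappa$ has no isolated points and is $\kappa$-compact --- the inputs to Nyikos's characterization, and the place where weak compactness is genuinely used --- is a detail the paper leaves implicit, and you supply it correctly.
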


\begin{theorem}\label{generic-com-metr}
Assume that $\kappa$ is a weakly compact cardinal.	If $\map{\eta}{K}{2^\kappa}$ is an embedding such that $\eta[K]$ is   nowhere dense in the  $\kappa$-ultrametric space $2^\kappa$, then $\map{\eta}{K}{2^\kappa}$ is a $\mathfrak{D}^w_K$-generic.
\end{theorem}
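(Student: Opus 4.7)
The plan is to verify condition (G2), using the weak compactness of $\kappa$ to reduce the problem to the uniform setting of Theorem~\ref{generic-ult-la}. Fix $\mathfrak{D}^w_K$-objects $X, Y$ (discrete of cardinality $<\kappa$), a surjection $f\colon X\to Y$, a continuous map $b\colon K\to Y$, and a continuous surjection $g\colon 2^\kappa\to X$ with $g\circ\eta=f\circ b$; we construct a continuous surjection $h\colon 2^\kappa\to Y$ with $f\circ h=g$ and $h\circ\eta=b$.

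The first step is to upgrade the topological hypotheses. By Theorem~\ref{th:15}, $2^\kappa$ is $\kappa$-compact, and $K$ is $\kappa$-compact by assumption, so Proposition~\ref{prop:17} forces $b$, $g$, and the inverse $\eta^{-1}\colon\eta[K]\to K$ to be uniformly continuous. Proposition~\ref{prop:16} upgrades nowhere density of $\eta[K]$ to uniform nowhere density. Once this is done, the construction mimics that of Theorem~\ref{generic-ult-la}: by Theorem~\ref{th:4.1}, represent $2^\kappa$ as an inverse limit $\liminv\{X_\alpha,\pi_\alpha^\beta:\alpha\le\beta<\kappa\}$ of discrete spaces of cardinality $<\kappa$; pick $\alpha<\kappa$ such that each $g^{-1}(x)$ is a union of level-$\alpha$ basic clopen sets, and $\gamma<\kappa$ such that the sets $\eta[b^{-1}(y)]$ are pairwise separated by disjoint unions of level-$\gamma$ basic clopen sets; choose $\beta>\alpha,\gamma$ sufficiently large, guaranteed by uniform nowhere density, so that each $g^{-1}(x)$ can be partitioned into level-$\beta$ clopen balls containing the sets $\eta[b^{-1}(y)]$ (for those $y\in f^{-1}(x)$ with nonempty preimage) plus additional balls disjoint from $\eta[K]$, with the extras distributed so that every $y\in f^{-1}(x)$ receives at least one ball. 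Then define $h$ on each level-$\beta$ ball to be the corresponding $y$-value; the resulting map is continuous (being determined at level $\beta$), satisfies $f\circ h=g$ and $h\circ\eta=b$ by construction, and is surjective.

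The main obstacle is the combinatorial assignment ensuring surjectivity of $h$ within each fiber $g^{-1}(x)$: every $y\in f^{-1}(x)$ must receive at least one level-$\beta$ ball, and the ball containing $\eta[b^{-1}(y)]$ (when nonempty) must be assigned to $y$ itself. Weak compactness of $\kappa$ is essential here, supplying $\kappa^{<\kappa}=\kappa$ so that the refinement to level $\beta$ is a cover of cardinality $<\kappa$ with enough spare clopen balls, provided by uniform nowhere density of $\eta[K]$, to distribute among the $<\kappa$-many elements of each $f^{-1}(x)$.
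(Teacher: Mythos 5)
Your proposal is correct and matches the paper's intended argument exactly: the paper gives no explicit proof of this theorem, instead noting that one uses Propositions~\ref{prop:16} and~\ref{prop:17} (together with Theorem~\ref{th:15}) to upgrade continuity to uniform continuity and nowhere density to uniform nowhere density, and then mimics the construction of Theorem~\ref{generic-ult-la} — which is precisely your reduction. Your closing remarks on distributing spare level-$\beta$ balls within each fiber, using $\kappa$-compactness and inaccessibility of $\kappa$, correctly identify the only point needing care.
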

Knaster and Reichbach \cite{KnaRei} established the following theorem:

\begin{quote}
\textit{If $P$ and $K$ are closed nowhere dense subsets of the Cantor space $2^\omega$ and $f$ is a homeomorphism between $P$ and $K,$ then there exists a homeomorphism between the Cantor space extending $f.$ }
\end{quote}

Below we have a counterpart of this theorem for the $\kappa$-ultrametric space $2^\kappa$. 

\begin{corollary}[{\cite[Lemma 3.19] {Nyi}}]  Assume that $\kappa$ is a weakly compact cardinal. Then every  homeomorphism of  nowhere dense subsets of the $\kappa$-ultrametric space $2^\kappa$ 	can be extended to an auto-homeomorphism of  $2^\kappa$.
\end{corollary}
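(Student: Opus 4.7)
The strategy mirrors the proofs of the two earlier extension corollaries (for $\omega^*$ and for $\kappa^\lambda$): the task is to reduce the extension problem to the uniqueness of generic objects in the \fra framework from Section~\ref{sekcja:4}. Fix a homeomorphism $h\colon K\to L$ between nowhere dense subsets $K,L\subs 2^\kappa$, and let $\eta\colon K\hookrightarrow 2^\kappa$ and $\eta_1\colon L\hookrightarrow 2^\kappa$ denote the inclusion maps. Since $2^\kappa$ is $\kappa$-compact by Theorem~\ref{th:15} and $K,L$ are closed (the nowhere dense subsets in question are the closures of themselves), both $K$ and $L$ are $\kappa$-compact $\kappa$-ultrametric spaces; in particular, $K$ and $L$ have weight at most $\kappa$, so we are within the scope of the categories $\mathfrak{D}^w_K\subseteq\mathfrak{M}^w_K$ introduced in Section~\ref{sekcja:4}.

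First I would dispose of any concern about uniform continuity: by Proposition~\ref{prop:17}, every continuous map from the $\kappa$-compact $\kappa$-ultrametric space $K$ to a $\kappa$-ultrametric space is automatically uniformly continuous, and the same applies to $L$. Hence both $\eta_1\circ h\colon K\to 2^\kappa$ and $\eta\colon K\to 2^\kappa$ are $\mathfrak{M}^w_K$-objects whose images ($L$ and $K$, respectively) are nowhere dense in $2^\kappa$. Theorem~\ref{generic-com-metr} applies to each of them and shows that both $\eta$ and $\eta_1\circ h$ are $\mathfrak{D}^w_K$-generic.

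The final step is to invoke the uniqueness of generic objects (\cite[Theorem~4.6]{KubFra}) in the ambient category $\mathfrak{M}^w_K$: any two $\mathfrak{D}^w_K$-generic objects there are connected by an $\mathfrak{M}^w_K$-isomorphism making the natural triangle over $K$ commute. Applied to $\eta$ and $\eta_1\circ h$ this produces a homeomorphism $H\colon 2^\kappa\to 2^\kappa$ satisfying $H\circ\eta=\eta_1\circ h$, which is precisely an auto-homeomorphism of $2^\kappa$ whose restriction to $K$ equals $h$.

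The principal obstacle to making this argument rigorous is not in the closing diagram chase but in checking that the pair $\mathfrak{D}^w_K\subseteq\mathfrak{M}^w_K$ indeed satisfies conditions {\rm(L0)}--{\rm(L3)} and that the uniqueness theorem of \cite{KubFra} may be applied in this weakly compact setting; once Proposition~\ref{prop:17} is used to identify the ``continuous'' and ``uniformly continuous'' categories over $\kappa$-compact bases, and Proposition~\ref{prop:16} is used to identify ``nowhere dense'' with ``uniformly nowhere dense'', the verification is a direct transcription of what was done for $\omega^*$ in Section~\ref{sekcja:2} and for $\kappa^\lambda$ in Section~\ref{sekcja:3}.
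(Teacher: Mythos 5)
Your argument is correct and is exactly the intended one: the paper leaves this corollary without an explicit proof, but the evident template is the proof of the analogous extension corollary for $\kappa^\lambda$ in Section~\ref{sekcja:3} (and for $\omega^*$ in Section~\ref{sekcja:2}), namely applying Theorem~\ref{generic-com-metr} to both inclusions and then the uniqueness of generic objects from \cite[Theorem 4.6]{KubFra}. Your added remarks on closedness, $\kappa$-compactness, and the role of Propositions~\ref{prop:16} and~\ref{prop:17} in collapsing the uniform and non-uniform notions are precisely the points the paper itself flags as the reason the weakly compact hypothesis is needed.
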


\begin{corollary} Assume that $\kappa$ is a weakly compact cardinal. Then every  nowhere dense set in the $\kappa$-ultrametric space $2^\kappa$ is a retract.
\end{corollary}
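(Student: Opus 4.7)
The plan is to mimic the proof of the corresponding retract corollary from Section~\ref{sekcja:3} (which itself follows the $\omega^*$ argument of Section~\ref{sekcja:2}). Let $K\subseteq 2^\kappa$ be a nowhere dense subset and let $\eta\colon K\hookrightarrow 2^\kappa$ denote the inclusion. By Theorem~\ref{generic-com-metr}, $\eta$ is $\mathfrak{D}^w_K$-generic, and by Proposition~\ref{prop:16} the set $K$ is in fact uniformly nowhere dense in $2^\kappa$, so the $\kappa$-ultrametric structure behaves uniformly well.

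First I would represent $\eta$ as the limit of a continuous $\kappa$-sequence $\vec{u}=(\phi_\alpha\colon \alpha<\kappa)$ in $\mathfrak{D}^w_K$, with $\phi_\alpha\colon K\to U_\alpha$ and each $U_\alpha$ a discrete space of cardinality $<\kappa$; this is afforded by condition (L2) for the pair $\mathfrak{D}^w_K\subseteq \mathfrak{M}^w_K$ together with Theorem~\ref{th:4.1} applied to $2^\kappa$. Since $\eta=\lim\vec{u}$ is $\mathfrak{D}^w_K$-generic, Theorem~\ref{gen-fra} yields that $\vec{u}$ is a \fra $\kappa$-sequence in $\mathfrak{D}^w_K$. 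Finally, Theorem~\ref{th:17}(2) supplies a continuous map $r\colon \lim\vec{u}\to K$ satisfying $r\circ \phi_\kappa=\ide_K$, and since $\lim\vec{u}$ is homeomorphic to $2^\kappa$, this $r$ is the required retraction of $2^\kappa$ onto $K$.

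The only subtlety, and what I would treat most carefully, is the justification that the framework conditions (L0)--(L3) actually apply to the inclusion $\mathfrak{D}^w_K\subseteq \mathfrak{M}^w_K$ \emph{without} the uniform-continuity assumption of Section~\ref{sekcja:3}. For this I would rely on Proposition~\ref{prop:17}: since $K$ is $\kappa$-compact and the targets $U_\alpha$ are $\kappa$-ultrametric, every continuous map out of $K$ (or out of an inverse limit of such discrete spaces) is automatically uniformly continuous. Hence the verifications of (L0)--(L3) and the existence of the representing sequence reduce to their uniform analogues already carried out in the previous section, and no genuinely new obstacle arises.
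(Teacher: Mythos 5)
Your argument is correct and follows exactly the route the paper intends: the corollary is left without an explicit proof, but the pattern established by the analogous retraction corollaries in Sections~\ref{sekcja:2} and~\ref{sekcja:3} (represent the inclusion as a limit of a sequence in $\mathfrak{D}^w_K$, apply Theorem~\ref{generic-com-metr} and Theorem~\ref{gen-fra} to see it is a \fra\ sequence, then invoke Theorem~\ref{th:17}(2) for the left inverse) is precisely what you carry out, and your appeal to Propositions~\ref{prop:16} and~\ref{prop:17} to reduce to the uniform setting matches the paper's own remark that one should ``mimic the considerations of Section~\ref{sekcja:3}'' using those propositions. The only point worth making explicit, which the paper also glosses over, is that the nowhere dense set must be closed (hence $\kappa$-compact) for it to be an admissible $K$ and for a retract to exist at all.
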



\begin{thebibliography}{99}
	
\bibitem{Bal-Fra-Mills} {\sc B. Balcar, R. Frankiewicz, C. Mills},\textit{ More on nowhere dense closed P-sets}, Bull. Acad. Polon. Sci. Sér. Sci. Math. 28 (1980), no. 5-6, 295--299 (1981)	

\bibitem{BKW} {\sc W. Bielas, W. Kubi\'s, M.~Walczy\'nska}, {\it Homogeneous probability measures on the Cantor
	set}, European Journal of Mathematics 4 (2), 687--698
	

\bibitem{Mill-Douw} {\sc E. K. van Douwen, J. van Mill}, \textit{ The homeomorphism extension theorem for $\beta\omega\setminus\omega$.} Papers on general topology and applications (Madison, WI, 1991), 345--350, Ann. New York Acad. Sci., 704, New York Acad. Sci., New York, 1993.

\bibitem{DroGoe}
{\sc M. Droste,  R. G\"obel}, {\it A categorical theorem on universal objects and its application in abelian group theory and computer science}, Proceedings of the International Conference on Algebra, Part 3 (Novosibirsk, 1989), 49--74, Contemp. Math., 131, Part 3, Amer. Math. Soc., Providence, RI, 1992


\bibitem{F1} {\sc  R. \fra}, {\it Sur quelques classifications des syst\`emes de relations\/}, Publ. Sci. Univ. Alger. S\'er. A. {\bf1} (1954) 35--182.

\bibitem{HN}{\sc H. H. Hung, S. Negrepontis},  \textit{Spaces homeomorphic to $(2^\alpha)_\alpha$. II}, Trans. Amer. Math. Soc. 188 (1974), 1--30. 

\bibitem{IrwSol} {\sc T. Irwin, S. Solecki}, {\it Projective \fra\ limits and the pseudo-arc}, Trans. Amer. Math. Soc. {358} (2006) 3077--3096
 
\bibitem{Jech} {\sc T. Jech}, \textit{Set Theory}, 3rd edn. Springer, Berlin (2002)

\bibitem{KubFra} {\sc W. Kubi\'s}, {\it \fra\ sequences: category-theoretic approach to universal homogeneous structures}, Ann. Pure Appl. Logic 165 (2014) 1755--1811

\bibitem{KnaRei} {\sc B. Knaster, M. Reichbach}, {\it Notion d'homog\'en\'eit\'e et prolongements des hom\'eomorphies}, Fund. Math. 40 (1953) 180--193
	
\bibitem{MacLane} {\sc S. Mac Lane}, {\it Categories for the working mathematician. Second edition}. Graduate Texts in Mathematics, 5. Springer-Verlag, New York, 1998

\bibitem{MonScot}{\sc D. Monk, D. Scott}, \textit{Additions to some results of Erd\H{o}s and Tarski}, Fund. Math.
53 (1963/64), 335--343.

\bibitem{neg} {\sc S. Negrepontis}, \textit{The Stone space of the saturated Boolean algebras}, Trans. Am. Math. Soc. 141, 515--527 (1969)

\bibitem{Nev-Lloyd}{\sc C.W. Neville, S.P. Lloyd},\textit{ $\aleph$-projective spaces}, Illinois J. Math. 25 (1981),
	159--168.

\bibitem{Nyi} {\sc P. Nyikos}, \textit{On some non-Archimedean spaces of Alexandroff and Urysohn}, Topology Appl. 91 (1999), 1--23.

\bibitem{par} {\sc I.I. Parovičenko},  \textit{On a universal bicompactum of weight $\aleph$}, 
Dokl. Akad. Nauk SSSR 150, 36--39 (1963) 

	
\bibitem{Urysohn} {\sc  P.S. Urysohn}, {\it Sur un espace m\'etrique universel\/}, I, II,
Bull. Sci. Math. (2) 51 (1927), 43--64, 74--90.

\end{thebibliography}
\end{document}